\definecolor{gr}{rgb}   {0.,   0.69,   0.23 }
\definecolor{bl}{rgb}   {0.,   0.5,   1. }
\definecolor{mg}{rgb}   {0.85,  0.,    0.85}
\definecolor{yl}{rgb}   {0.8,  0.7,   0.}
\definecolor{or}{rgb}  {0.7,0.2,0.2}
\newcommand{\wick}[1]{\wcol#1\wcol}
\newcommand{\VVert}[1]{{\left\vert\kern-0.25ex \left\vert\kern-0.25ex \left \vert #1 
    \right\vert\kern-0.25ex \right\vert\kern-0.25ex \right\vert}} 
\newcommand{\1}{\hspace{0.2mm}\textup{I}\hspace{0.2mm}}
\newtheorem{theorem}{Theorem} [section]
\newtheorem{lemma}[theorem]{Lemma}
\newtheorem{proposition}[theorem]{Proposition}
\newtheorem{remark}[theorem]{Remark}
\newtheorem{definition}[theorem]{Definition}
\DeclareMathOperator*{\supp}{supp}
\DeclareMathOperator{\Law}{Law}
\DeclareMathSymbol{\wcol}{\mathord}{operators}{"3A}
\newcommand{\W}{\mathcal{W}}
\newcommand{\dr}{\theta}
\newcommand{\Ha}{\mathbb{H}_a}
\newcommand{\noi}{\noindent}
\newcommand{\Z}{\mathbb{Z}}
\newcommand{\R}{\mathbb{R}}
\newcommand{\C}{\mathbb{C}}
\newcommand{\T}{\mathbb{T}}
\newcommand{\N}{\mathbb{N}}
\let\Re=\undefined\DeclareMathOperator*{\Re}{Re}
\let\P= \undefined
\newcommand{\P}{\mathbf{P}}
\newcommand{\PP}{\mathbb{P}}
\newcommand{\F}{\mathcal{F}}
\newcommand{\nb}{\nabla}
\newcommand{\dl}{\delta}
\newcommand{\eps}{\varepsilon}
\newcommand{\g}{\gamma}
\newcommand{\G}{\Gamma}
\newcommand{\ld}{\lambda}
\newcommand{\s}{\sigma}
\newcommand{\wt}{\widetilde}
\newcommand{\cj}{\overline}
\newcommand{\dx}{\partial_x}
\newcommand{\dt}{\partial_t}
\newcommand{\jb}[1]
{\langle #1 \rangle}
\renewcommand{\H}{\mathcal{H}}
\newcommand{\les}{\lesssim}
\newcommand{\ges}{\gtrsim}
\newcommand{\ind}{\mathbf 1}
\newcommand{\E}{\mathbb{E}}
\renewcommand{\o}{\omega}
\renewcommand{\O}{\Omega}
\numberwithin{equation}{section}
\numberwithin{theorem}{section}
\newtheorem*{ackno}{Acknowledgements}
\newcommand{\M}{\mathcal{M}}
\tikzset{
	dot/.style={circle,fill=black,draw=black,inner sep=1pt,minimum size=0.5mm},
	>=stealth,
	}
\tikzset{
	ddot/.style={circle,fill=white,draw=black,inner sep=2pt,minimum size=0.8mm},
	>=stealth,
	}
\tikzset{decision/.style={ 
        draw,
        diamond,
        aspect=1.5
    }}
\tikzset{dia2/.style
={diamond,fill=white,draw=black,inner sep=0pt,minimum size=1mm},
	>=stealth,
	}
\tikzset{dia/.style
={star,fill=black,draw=black,inner sep=0pt,minimum size=1mm},
	>=stealth,
	}
\colorlet{symbols}{black}
\colorlet{testcolor}{green!60!black}
\def\1{\mathbf{{1}}}
\definecolor{dblue}{rgb}{0.1, 0.1, 0.9}
\tikzset{
	root/.style={circle,fill=testcolor,inner sep=0pt, minimum size=2mm},		
	dot/.style={circle,fill=black,draw=black, solid,inner sep=0pt,minimum size=0.75mm},
	bdot/.style={circle,fill=blue,draw=dblue, solid,inner sep=0pt,minimum size=0.75mm},
		}
\colorlet{symbols}{blue!90!black}
\def\DeclareSymbol#1#2#3{\expandafter\gdef\csname MH@symb@#1\endcsname{\tikz[baseline=#2,scale=0.15]{#3}}%
\expandafter\gdef\csname MH@symb@#1s\endcsname{\scalebox{0.6}{\tikz[baseline=#2,scale=0.15]{#3}}}}
\def\<#1>{\csname MH@symb@#1\endcsname}
\def\DeclareSymbol#1#2#3{\expandafter\gdef\csname MH@symb@#1\endcsname{\tikz[baseline=#2,scale=0.15]{#3}}}
\def\<#1>{\csname MH@symb@#1\endcsname}
\def\DeclareSymbol#1#2#3{\expandafter\gdef\csname MH@symb@#1\endcsname{\tikz[baseline=#2,scale=0.15]{#3}}}
\def\<#1>{\csname MH@symb@#1\endcsname}
\begin{document}

\baselineskip = 14pt

\title[Large deviations for the Gross Pitaevskii Gibbs measure]
{Large deviation principles for the Gross Pitaevskii Gibbs measure at low temperature}

\author[L.~Packer, K.~Seong, and P.~Sosoe]
{Liam Packer, Kihoon Seong, and Philippe Sosoe}

\address{Liam Packer\\
Department of Mathematics\\
Cornell University\\ 
310 Malott Hall\\ 
Cornell University\\
Ithaca\\ New York 14853\\ 
USA }

\email{lp492@cornell.edu}

\address{Kihoon Seong\\
Department of Mathematics\\
Cornell University\\ 
310 Malott Hall\\ 
Cornell University\\
Ithaca\\ New York 14853\\ 
USA \\  and Simons Laufer Mathematical Sciences Institute (SLMath)\\
17 Gauss Way\\
Berkeley, CA 94720\\ USA}

\email{kihoonseong@msri.org}

\address{Philippe Sosoe\\
Department of Mathematics\\
Cornell University\\ 
310 Malott Hall\\ 
Cornell University\\
Ithaca\\ New York 14853\\ 
USA }

\email{ps934@cornell.edu}

\subjclass[2020]{60F10, 82C05, 81T08, 35Q55}

\keywords{large deviations; Gibbs measure; low temperature;  soliton manifold; Gross-Piatevskii equation}

\begin{abstract}

We prove the large deviation principle for the conditional Gibbs measure associated with the focusing Gross–Pitaevskii equation in the low-temperature regime. This conditional measure is of mixed type, being canonical in energy and microcanonical in particle number. In particular, our result extends the large deviation principle for the mixed ensemble studied by Ellis, Jordan, Otto, and Turkington to a more singular setting, where the interaction potential is unbounded and the conditional event involves diverging renormalization constants. As a consequence of the large deviation principle, the Gibbs measure concentrates along the soliton manifold in the low-temperature limit.







\end{abstract}

\maketitle

\tableofcontents

\section{Introduction}

\subsection{Large deviation principle (LDP) for the mixed  ensemble}

In this paper, we study a large deviation principle for the Gross-Pitaevskii Gibbs measure in the low temperature limit. This measure has the following formal expression
\begin{align}
d\rho_{\eps}^D(\phi)=Z_{\eps}^{-1}\exp\Big\{-\frac 1\eps H(\phi) \Big\} \dl(M(\phi)-D)
\prod_{x \in \R} d\phi(x),
\label{Gibbs1}
\end{align}

\noi
conditioned on the event $\{M(\phi)=D\}$, $D>0$, where $Z_{\eps}$\footnote{Here $Z_\eps$ denotes different normalizing constants that may differ from one line to line.} is the partition function, $\eps>0$ is the temperature parameter, and $\prod_{x\in \R} d\phi(x)$ denotes the (non-existent) Lebesgue measure on fields $\phi:\R \to \C$.  Here, $H$ is the Hamiltonian 
\begin{align}
H(\phi)=\frac{1}{2}\int_{\R} |\dx \phi|^2 dx+\frac 12 \int_{\R} |x|^2 |\phi|^2 dx-\frac \ld4 \int_{\R} |\phi|^4 dx,
\label{Ha0}
\end{align}

\noi
where $\ld>0$ is the coupling constant that measures the strength of the focusing interaction. For such a system, $|\phi|^2$  is interpreted as the particle density, and thus the total number of particles is given by
\begin{align}
M(\phi)=\int_{\R} |\phi|^2 dx.
\label{L2mass}
\end{align}

\noi 
The Gibbs measure \eqref{Gibbs1} is called a mixed ensemble, as it is canonical in the energy $H(\phi)$ and microcanonical in particle number $M(\phi)$, which is informally understood as a regular conditional distribution. As discussed in the previous works of Lebowitz, Rose, and Speer \cite{LRS}, McKean and Vaninsky \cite{McKVa}, Bourgain \cite{BO94, BO97a}, and Brydges and Slade \cite{BS}, such Gibbs ensembles are necessarily microcanonical in $M$, since the canonical Gibbs ensemble with respect to the Hamiltonian $H$ is not normalizable in the focusing case $\ld>0$, that is, the partition function $Z_\eps=\infty$ without the conditioning $M(\phi)=D$. 


The Gibbs ensemble \eqref{Gibbs1} is an invariant measure for the following Hamiltonian PDE, known as the Gross-Pitaevskii equation
\begin{align}
i \dt \psi +\dx^2 \psi- |x|^2 \psi -\ld |\psi|^2 \psi=0, \qquad (t,x) \in \R \times \R,
\label{GP}
\end{align}


\noi
where $\psi(t,x):\R \times \R \to \C$. This is a nonlinear Schr\"odinger equation with a confining harmonic potential $|x|^2$, which is known to model Bose-Einstein condensates.




In this paper, we establish a large deviation principle for the regular conditional probability measure \eqref{Gibbs1}:
\begin{align}
\lim_{\eps \to 0} \eps \log \rho^D_\eps\big(\{ \phi \in B \}\big)=-\inf_{\phi \in B} J^D(\phi)
\label{LDP0}
\end{align}

\noi
for appropriate sets $B \subset \supp \rho_\eps^D$, where $J^D$ is the large deviation rate function
\begin{equation}
\begin{split}
J^D(\phi)=
\begin{cases}
H(\phi)-\inf\limits_{M(\phi)=D} H(\phi) \quad &\text{if} \quad  \phi\in \mathcal{H}^1(\R) \quad \text{and} \quad M(\phi)=D \\
\infty \quad &\text{otherwise}.
\end{cases}
\end{split}
\label{JDrate}
\end{equation}

\noi
Here, $\mathcal{H}^1(\R)$ denotes the Cameron–Martin space associated with the Gaussian measure whose covariance operator is $\mathcal{L}:=(-\dx^2+|x|^2)^{-1}$. See Theorem \ref{THM:1} for the precise statement.

As a consequence of the large deviation principle in \eqref{LDP0}, the rate function $J^D$ penalizes deviations from minimal energy, favoring configurations near the lowest energy level.  Therefore,  we obtain a concentration phenomenon for the conditional Gibbs measure \eqref{Gibbs1}: for any $\dl>0$, there exists $c(\dl)>0$ such that 
\begin{align}
\rho_{\eps}^D\bigg( \Big\{ \inf_{Q\in \M^D} \|  \phi-Q    \|_{\mathcal{W}(\R) } \ge \dl   \Big\}  \bigg) \les e^{-\frac{c(\dl)}{\eps}}
\label{CE}
\end{align}

\noi 
as $\eps \to 0$, where $\M^D$ is the soliton manifold, that is, the family of minimizers of the Hamiltonian \eqref{Ha0} under the constraint $M(\phi)=D$, and $\mathcal{W}(\R)$ is a suitable norm defined below. Configurations $\phi$ far away from the minimizers of $J^D$ are therefore increasingly unlikely at low temperatures (with exponentially vanishing probability) due to their high cost $J^D(\phi)\geq c(\delta) > 0$. This shows that ground state solitary waves are the most probable macroscopic states. Since the Gibbs measure $\rho^D_\eps$ in \eqref{Gibbs1} is invariant under the flow of \eqref{GP}, high-probability events under $\rho^D_\eps$ capture the long-time dynamics.  Therefore, the concentration estimate \eqref{CE} implies that at low temperature, the long-term behavior of solutions to \eqref{GP} is expected to asymptotically decompose into a ground state solitary wave plus small-order fluctuations.



The earlier work of Ellis, Jordan, Otto, and Turkington \cite{EJOT} established large deviation principles for mixed ensembles with bounded interaction potentials  and non-singular conditioning (see Subsection \ref{SUBSEC:COM}). In contrast, the present work extends the framework to include unbounded quartic interactions and a singular conditioning event involving an infinite renormalization constant, as typically encountered in constructive quantum field theory. Notably, the Gaussian measure with covariance $(-\dx^2+|x|^2)^{-1}$ on $\R$ is not supported on $L^2(\R)$, but rather on $\mathcal{H}^{-\eta}(\R)$ for any  $\eta>0$, or on $L^p(\R)$ for $p >2$. This implies $M(\phi)=\infty$ almost surely under the Gaussian measure, so a diverging renormalization constant (Wick renormalization) must be included in the conditional event $\{M(\phi) = D\}$. A key feature of Theorem \ref{THM:1} is that this renormalization effect vanishes at the level of the large deviation rate function.  
Since the equilibrium macrostates of the system are determined by the minimizers of the rate function $J^D$, the most probable canonical macrostates remain unaffected by the presence of renormalization. For further explanation of the main results, see Theorems~\ref{THM:1} and \ref{THM:2}.


\begin{remark}\rm 
Without the harmonic potential $|x|^2$, the infinite volume Gibbs measure becomes trivial: the measure concentrates entirely at the zero configuration, yielding $\dl_0$ (see Subsection \ref{SUBSEC:COM} for details). This is in sharp contrast to the situation with the harmonic potential $|x|^2$, where the infinite volume Gibbs measure is already nontrivial and one can meaningfully investigate its concentration around the family of minimizers, that is, the soliton manifold, in the low-temperature regime.
\end{remark}




\subsection{Main result}
In this subsection, we present the main results. Before stating the main theorem, we briefly review the construction of the mixed ensemble \eqref{Gibbs1} and introduce the relevant notation.

First, we give a precise definition of the mixed ensemble as a conditional probability measure (see equation \eqref{Gibbs4}). To this end, we first study the grand canonical Gibbs ensemble 
\begin{align}
d\rho_{\eps,A}(\phi)=Z_{\eps,A}^{-1}\exp\Big\{-\frac 1\eps H^G(\phi) \Big\} \prod_{x \in \R} d\phi(x).
\label{Gibbs10}
\end{align}

\noi
Here, $H^G$ denotes the grand canonical Hamiltonian
\begin{align}
H^G(\phi)=\frac{1}{2}\int_{\R} |\dx \phi|^2 dx+\frac 12 \int_{\R} |x|^2 |\phi|^2 dx-\frac \ld4 \int_{\R} |\phi|^4 dx+A\bigg(\int_{\R} |\phi|^2 dx \bigg)^3,
\label{Ha1}
\end{align}

\noi
where $A$ is called the chemical potential. Compared to the case $A=0$, that is, the Hamiltonian $H(\phi)$ in \eqref{Ha0}, the grand canonical Hamiltonian $H^G$ in \eqref{Ha1} with large chemical potential $A$ is sufficiently coercive for a construction of the grand canonical Gibbs ensemble \eqref{Gibbs10}.  More specifically, Lemma \ref{LEM:coercive} shows that there exists $A_0 \gg 1$ such that for any $A \ge A_0$, $H^G(\phi) >0$ unless $\phi=0$.

The grand canonical ensemble \eqref{Gibbs10} is constructed by viewing the Gibbs measure as a weighted Gaussian measure
\begin{align*}
d\rho_{\eps,A}(\phi)=Z_{\eps, A}^{-1} \exp\Big\{ -\frac 1\eps \textbf{V}(\phi)  \Big\}\mu_\eps(d\phi)
\end{align*} 

\noi
where $\mu_\eps$  is the Gaussian measure with covariance operator $\mathcal{L}=(-\dx^2+|x|^2)^{-1}$, and 
\begin{align*}
\textbf{V}(\phi)=-\frac{\ld}{4}\int_{\R} |\phi|^4 dx+A\bigg(\int_{\R} |\phi|^2 dx\bigg)^3.
\end{align*}

\noi 
However, as pointed above, the Gaussian measure $\mu_\eps$ is not supported on $L^2(\R)$, but rather on $\mathcal{H}^{-\eta}(\R)$ for any  $\eta>0$, or on $L^p(\R)$ for $p >2$, that is, $\supp \mu_\eps \subset \mathcal{H}^{-\eta}(\R) \cap L^p(\R)$ (see Subsection \ref{SUBSEC:Gauss}). This implies 
\begin{align*}
M(\phi)=\int_{\R} |\phi|^2 dx=\infty, 
\end{align*}

\noi 
almost surely under the Gaussian measure $\mu_\eps$. Therefore, to properly construct the grand canonical Gibbs ensemble, we need to apply Wick renormalization to the taming part, as follows
\begin{align}
d\rho_{\eps,A}(\phi)=Z_{\eps, A}^{-1} \exp\Big\{ -\frac 1\eps V(\phi)  \Big\}\mu_\eps(d\phi),
\label{Gibbs5}
\end{align}

\noi
where 
\begin{align}
V(\phi)=-\frac{\ld}{4} \int_{\R} |\phi|^4 dx +A\bigg( \int_{\R} : \! |\phi|^2 \! : dx  \bigg)^3.
\label{potent}
\end{align}

\noi
See \eqref{Wick} for a precise definition of $: \! |\phi|^2 \! : $. For the construction of the grand canonical ensemble with the optimal power $\g=6$ 
and sufficiently large chemical potential $A\ge A_0 \gg 1$ in the taming term $A\|\phi \|_{L^2(\R)}^\g$, see  Remark~\ref{REM:Ggibb}.

In principle, we want to define the mixed ensemble \eqref{Gibbs1} as a regular conditional distribution
\begin{align}
\rho^D_\eps(B):= \rho_{\eps,A}\big(\{\phi \in B | M^w(\phi)=D  \}\big)=\frac{\rho_{\eps,A} \big( B \cap \{ M^w(\phi)=D  \} \big) }{ \rho_{\eps,A} \big(\{ M^w(\phi)=D  \} \big)      },
\label{Gibbs20}
\end{align}

\noi
given the microcanonical constraint $M^w(\phi)=D$, where
\begin{align}
M^w(\phi)=\int_{\R} : \! |\phi|^2 \!: dx.
\label{Wick2}
\end{align}

\noi 
Then, the mixed Gibbs ensemble \eqref{Gibbs20} coincides with the formal expression \eqref{Gibbs1}, provided that the conditioning $\{M(\phi)=D\}$ in \eqref{Gibbs1} is interpreted as the Wick renormalized $L^2$ mass $M^w(\phi)$. However, to avoid technical issues related to regular conditional distributions, arising from
$\rho_{\eps,A}  \big( \{ M^w(\phi)=D  \} \big)=0$, we instead consider the following conditional measure
\begin{align}
\rho_{\eps,r}^D(B):=\rho_{\eps,A}\big\{B \big|  M^w(\phi)\in [D-r,D+r] \big\}=\frac{ \rho_{\eps,A} \big( B\cap \{M^w(\phi)\in [D-r,D+r]  \}    \big)  }{\rho_{\eps,A}  \big( \{ M^w(\phi) \in [D-r,D+r]  \} \big)  },
\label{Gibbs4}
\end{align}

\noi
where $r$ is a positive parameter that defines the thickened shell $[D-r,D+r]$. 
Then, for suitable values of $D$, and all $r>0$, $\rho_{\eps,A}  \big( \{ M^w(\phi) \in [D-r,D+r]  \} \big)>0$ (see Remark \ref{REM:wcond}). Therefore, the conditional probability $\rho_{\eps,r}^D$ is well-defined.

Compared to the definition \eqref{Gibbs20}, which is independent of the choice of the chemical potential $A$, the conditional measure $\rho^D_{\eps,r}$ depends on $A$ 
due to the presence of the small gap $r$ in the definition \eqref{Gibbs4}. However, the large deviation result remains independent of the choice of $A$. In particular, the rate function does not depend on $A$. See Theorem \ref{THM:1}.

\noi
We now define the rate function 
\begin{equation}\label{LDPrate}
\begin{split}
J^D(\phi)=
\begin{cases}
H(\phi)-\inf\limits_{M(\phi)=D} H(\phi) \quad &\text{if} \quad  \phi\in \mathcal{H}^1(\R) \quad \text{and} \quad M(\phi)=D \\
\infty \quad &\text{otherwise}.
\end{cases}
\end{split}
\end{equation}

\noi
In the following, the coupling constant $\ld>0$ in \eqref{Ha0} plays no essential role, and we may therefore assume $\ld=1$. We are now ready to state the main theorem.

\begin{theorem}\label{THM:1}
Let $\mathcal{S}=H^{-\eta}(\R)$ for any $\eta>0$, or $L^p(\R)$ for any finite $p>2$. Then, there exists $D^*>0$ such that for every $D \ge D^*$, the mixed ensemble $\rho^D_{\eps,r}$ in \eqref{Gibbs4} satisfies a large deviation principle on $\mathcal{S}$ with rate function $J^D$ and speed $\eps>0$. In other words,
\begin{itemize}
\item[(1)] For every closed set $\mathcal{C} \subset \mathcal{S}$, we have 
\begin{align*}
\limsup_{r \to 0}\limsup_{\eps \to 0} \eps \log \rho_{\eps,r}^D(\mathcal{C})&\le -\inf_{\phi \in \mathcal{C}} J^D(\phi).
\end{align*}

\medskip 

\item[(2)] For every open set $\mathcal{O}\subset \mathcal{S}$, we have 
\begin{align*}
\liminf_{r\to 0}\liminf_{\eps \to 0} \eps \log \rho_{\eps,r}^D(\mathcal{O})&\ge -\inf_{\phi \in \mathcal{O} } J^D(\phi).
\end{align*} 

\end{itemize}
 
\end{theorem}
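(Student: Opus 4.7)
The plan is to deduce the conditional LDP for $\rho^D_{\eps,r}$ from an unconditional LDP for the grand canonical measure $\rho_{\eps,A}$ via a Bayesian conditioning argument, and then take $r\to 0$ after $\eps\to 0$. First I would establish a Schilder-type LDP for the Gaussian reference $\mu_\eps$ on $\mathcal{S}$. Since $\mu_\eps$ is centered Gaussian with covariance $\eps\mathcal{L}$ and Cameron–Martin space $\mathcal{H}^1(\R)\hookrightarrow\mathcal{S}$, the classical Schilder theorem for abstract Wiener spaces, applied after the rescaling $\phi=\sqrt{\eps}\psi$, yields an LDP on $\mathcal{S}$ with speed $\eps$ and good rate function $I_0(\phi)=\tfrac12\|\phi\|_{\mathcal{H}^1}^2$ on $\mathcal{H}^1$ and $+\infty$ otherwise; the compactness of sublevel sets uses that $-\dx^2+|x|^2$ has compact resolvent.

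Next I would upgrade this to an LDP for the tilted measure $\rho_{\eps,A}$ with rate function $I^G=H^G-\inf H^G$, supported on $\mathcal{H}^1$. In principle this is a Varadhan transfer, but two features prevent a direct application: the Wick square $M^w(\phi)$ in $V$ requires subtracting a divergent renormalization constant under $\mu_\eps$, so $V$ is not continuous on $\mathcal{S}$; and the focusing term $-\tfrac14\|\phi\|_{L^4}^4$ is not bounded below. For the first point, the Wick constant at temperature $\eps$ is $\eps$ times the one at temperature $1$ and is $\phi$-independent, so the renormalization cancels between $e^{-V/\eps}$ and the partition function $Z_{\eps,A}$ and vanishes at the level of the rate function on $\mathcal{H}^1$. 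I would make this rigorous via a spectral truncation $V_N$ of $V$: apply Varadhan's lemma to $V_N$ (which is continuous on $\mathcal{S}$), then pass $N\to\infty$ using a uniform-in-$\eps$ exponential equivalence between the truncated and full Gibbs measures, with the required tail bounds coming from Wick hypercontractivity and from the coercivity $H^G>0$ for $A\ge A_0$ (Lemma~\ref{LEM:coercive}), which also tames the focusing $L^4$ nonlinearity.

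Armed with the LDP for $\rho_{\eps,A}$, applying Bayes to \eqref{Gibbs4} gives, for closed $\mathcal{C}\subset\mathcal{S}$,
\begin{align*}
\eps\log\rho_{\eps,r}^D(\mathcal{C})=\eps\log\rho_{\eps,A}\big(\mathcal{C}\cap\{M^w\in[D-r,D+r]\}\big)-\eps\log\rho_{\eps,A}\big(\{M^w\in[D-r,D+r]\}\big),
\end{align*}
and symmetrically for open $\mathcal{O}$. Applying the LDP upper bound to the closed intersection in the numerator and the LDP lower bound to an open slab strictly inside $[D-r,D+r]$ in the denominator produces the required bounds for $\rho_{\eps,r}^D$ with rate function $I^G(\phi)-\inf_{M(\psi)\in[D-r,D+r]}I^G(\psi)$. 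Since $H^G(\phi)=H(\phi)+AM(\phi)^3$ is constant in $A$ on each mass layer $\{M=\text{const.}\}$, the $A$-dependence disappears after the subtraction; taking $r\to 0$ and using that $\phi\mapsto M(\phi)$ is continuous on $\mathcal{H}^1$-bounded sets (subcritical Sobolev embedding, controlled here by the harmonic weight) together with the compactness of sublevel sets of $I^G$ in $\mathcal{S}$, one obtains $J^D=H-\inf_{M=D}H$ as in \eqref{LDPrate}. The threshold $D^*>0$ is needed at this stage so that $\inf_{M=D}H$ is strictly negative and realized at a genuine soliton minimizer, guaranteeing that $J^D$ has nontrivial structure.

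The principal difficulty is the second step: transferring the Gaussian LDP to the renormalized tilted Gibbs measure in the presence of both the singular Wick renormalization and the focusing quartic interaction, which together obstruct a direct application of Varadhan's lemma. I expect this to require a careful exponential-equivalence argument between $\rho_{\eps,A}$ and its spectrally truncated counterpart, together with moment estimates on Wick-ordered polynomials and coercive cubic-taming bounds that are uniform in both the truncation level and the temperature $\eps$. A subsidiary difficulty is verifying, at the level of the rate function, that the diverging renormalization constants in the conditioning event $\{M^w\in[D-r,D+r]\}$ effectively disappear, so that the classical unrenormalized expression $H-\inf_{M=D}H$ is indeed recovered.
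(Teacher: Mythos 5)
Your proposal takes a genuinely different route than the paper, and the difference is not cosmetic: the paper does \emph{not} deduce the conditional LDP by applying a Varadhan/contraction-type transfer to the conditioning event, and the reason is exactly the part you flag as a ``subsidiary difficulty'' without resolving. The paper's Step 1 proves the Laplace principle for $\rho_{\eps,A}$ directly from the Bou\'e--Dupuis variational formula plus a Gamma-convergence/equicoercivity argument over the relaxed drift space (Proposition~\ref{PROP:GLDP}, Lemmas~\ref{LEM:Ga0}--\ref{LEM:Ga3}), whereas you propose a Schilder LDP for $\mu_\eps$ followed by a truncation-and-exponential-equivalence Varadhan transfer. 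Both are plausible for this piece, and the coercivity input is the same ($H^G>0$ for $A\geq A_0$). But the paper's choice of the variational route is not arbitrary: it is precisely what allows the renormalized constraint to be processed later.

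The real gap is in your third step. You write: ``Applying the LDP upper bound to the closed intersection in the numerator and the LDP lower bound to an open slab strictly inside $[D-r,D+r]$ in the denominator produces the required bounds.'' This does not go through: the set $\{M^w(\phi)\in[D-r,D+r]\}$ is neither closed nor open in $\mathcal{S}$, because $M^w$ is not a continuous (or even everywhere-defined) functional on $\mathcal{S}$; it exists only as an $L^2(\mu_\eps)$-limit of Wick-truncated quantities and, for deterministic $\phi\in\mathcal{H}^1$, the naive pointwise limit $\lim_N \int(|\phi_N|^2-\eps\sigma_N)\,dx$ diverges to $-\infty$. So neither the LDP bounds nor the contraction principle can be applied through this conditioning map. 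The paper states this explicitly: a direct contraction-principle argument in the style of Ellis--Jordan--Otto--Turkington would produce the meaningless constraint $\inf_{M^w(\phi)=D}H^G(\phi)$. Your observation that ``the Wick constant at temperature $\eps$ is $\eps$ times the one at temperature $1$ and is $\phi$-independent, so the renormalization cancels'' is heuristically the right idea, but it is not a proof and cannot be turned into one at the level of open/closed sets in $\mathcal{S}$. What the paper actually does (Proposition~\ref{PROP:WL2}) is go back inside the Bou\'e--Dupuis formula, perform the drift shift $Z=-\eps^{1/2}W_N+Q$ to separate $M^w(\eps^{1/2}W+Z)$ into $M(Q)$ plus explicit fluctuation terms $\Phi(Q,W,\eps)$ of size $O(\eps(\log N)^2)$ in mean square, and then show that the conditioning event $\{M^w\in[D-r,D+r]\}$ is, up to a negligible probability estimate, equivalent to $\{M(Q)\in[D-2r,D+2r]\}$. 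This is the mechanism by which the diverging renormalization constant vanishes in the rate function, and it has no analogue in the Bayes-plus-LDP framework you propose. Relatedly, even after you somehow obtain a rate of the form $I^G(\phi)-\inf_{M(\psi)\in[D-r,D+r]}I^G(\psi)$ (which, note, already presupposes the passage from $M^w$ to $M$), your $r\to 0$ step invokes continuity of $M$ on $\mathcal{H}^1$-bounded sets, but the paper additionally needs the more delicate ball-level estimates \eqref{B4} and \eqref{B6}, established by re-running the Proposition~\ref{PROP:WL2} argument inside a ball $\bar{B}(\psi,\dl)$, to handle the cases $\psi\notin\mathcal{H}^1$ or $M(\psi)\neq D$. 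These cases are what make the upper bound for general compact (then closed) sets work, and they are absent from your sketch.

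In short: your grand-canonical LDP step is a plausible alternative route, but your conditioning step as written would fail because $\{M^w\in[D-r,D+r]\}$ is not amenable to LDP bounds or contraction, and the paper's Proposition~\ref{PROP:WL2} --- a direct variational analysis of the microcanonical entropy with explicit removal of the Wick counterterm --- is the essential missing ingredient.
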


Our result establishes a large deviation principle for a conditional probability measure \eqref{Gibbs4},  under a singular conditioning.  A striking feature is that the diverging renormalization constants, which appear in both the taming term \eqref{potent} and the conditional event \eqref{Gibbs4}, vanish in the large deviation rate function \eqref{LDPrate}. This highlights the fact that, despite the singular nature of the problem at the level of the measure,  the macroscopic behavior, as captured by the large deviation rate function $J^D$, is governed by a constrained minimization problem $J^D(\phi)=0$ under $M(\phi)=D$. This variational problem is free from renormalization effects. For a detailed explanation and a comparison with earlier literature on the large deviation principle for the mixed ensemble, see Subsections  \ref{SUBSEC:COM} and \ref{SUBSEC:proof}.


As a result of the large deviation principle in \eqref{LDP0}, the rate function 
$J^D$ assigns higher cost to configurations away from the energy minimizer, thus favoring the lowest energy level.
\begin{theorem}\label{THM:2}
Let $2<p<\infty$. For any $\dl>0$, there exists $c(\dl)>0$ such that 
\begin{align}
\rho_{\eps,r}^D\bigg( \Big\{ \inf_{Q\in \M^D} \|  \phi-Q    \|_{L^p(\R) } \ge \dl   \Big\}  \bigg) \les e^{-\frac{c(\dl)}{\eps}}
\label{CE0}
\end{align}

\noi
as $\eps \to 0$ and $r \to 0$, provided $D \ge D^*$, where $D^*$ is as in Theorem~\ref{THM:1}. Here, $\M^D$ is the soliton manifold, that is, the family of minimizers of the Hamiltonian \eqref{Ha0} under the constraint $M(\phi)=D$, 
\begin{align*}
\inf_{M(\phi)=D}H(\phi)=H(Q)
\end{align*}

\noi
for any $Q \in \M^D$.
\end{theorem}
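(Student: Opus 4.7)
The plan is to derive the concentration estimate directly from the large deviation upper bound in Theorem \ref{THM:1}(1), applied to the closed set
\begin{align*}
\mathcal{C}_\dl := \Big\{ \phi \in \mathcal{S} : \inf_{Q \in \M^D} \|\phi - Q\|_{L^p(\R)} \ge \dl \Big\}.
\end{align*}
Since $\phi \mapsto \inf_{Q \in \M^D}\|\phi - Q\|_{L^p(\R)}$ is $1$-Lipschitz on $\mathcal{S}=L^p(\R)$, the set $\mathcal{C}_\dl$ is closed, and Theorem \ref{THM:1}(1) yields
\begin{align*}
\limsup_{r\to 0} \limsup_{\eps \to 0} \eps \log \rho_{\eps, r}^D(\mathcal{C}_\dl) \le -\inf_{\phi \in \mathcal{C}_\dl} J^D(\phi).
\end{align*}
Thus \eqref{CE0} reduces to proving $c(\dl) := \inf_{\phi \in \mathcal{C}_\dl} J^D(\phi) > 0$, which I will establish by a variational compactness argument.

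Suppose for contradiction that $c(\dl) = 0$. Then there is a sequence $\{\phi_n\} \subset \mathcal{C}_\dl \cap \mathcal{H}^1(\R)$ with $M(\phi_n) = D$ and $H(\phi_n) \to \inf_{M(\phi)=D} H(\phi)$. Using the $L^2$-subcritical Gagliardo--Nirenberg inequality $\|\phi\|_{L^4(\R)}^4 \les \|\dx \phi\|_{L^2(\R)} \|\phi\|_{L^2(\R)}^3$ together with $M(\phi_n)=D$, I would first deduce (via AM-GM) that $\inf_{M=D} H > -\infty$ and that $\{\phi_n\}$ is bounded in $\mathcal{H}^1(\R)$ (recall $\|\phi\|_{\mathcal{H}^1}^2 = \|\dx\phi\|_{L^2}^2 + \|x\phi\|_{L^2}^2$).

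Next I exploit the fact that the harmonic oscillator $-\dx^2+|x|^2$ has compact resolvent on $L^2(\R)$; interpolating with the one-dimensional Sobolev embedding $\mathcal{H}^1(\R) \embeds L^\infty(\R)$, this yields a compact embedding $\mathcal{H}^1(\R) \embeds L^q(\R)$ for every finite $q \ge 2$. Extracting a subsequence, $\phi_n \to \phi_*$ strongly in $L^p(\R) \cap L^2(\R) \cap L^4(\R)$ and weakly in $\mathcal{H}^1(\R)$. Strong $L^2$ convergence forces $M(\phi_*) = D$; weak lower semicontinuity of the quadratic part of $H$ together with strong $L^4$ convergence of the focusing term gives $H(\phi_*) \le \liminf_n H(\phi_n) = \inf_{M=D} H$. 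Thus $\phi_* \in \M^D$, contradicting $\|\phi_n - \phi_*\|_{L^p(\R)} \ge \dl$.

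The only genuinely non-routine ingredient is the compactness step above. On the whole line without the confining potential, minimizing sequences for focusing cubic NLS can translate to infinity and one must invoke the full concentration-compactness machinery. The harmonic trap $|x|^2$ in \eqref{Ha0} rules this out by forcing decay at infinity, reducing the analysis to a clean weak-convergence plus compact-embedding argument. I do not anticipate further obstacles, and once $c(\dl)>0$ is in hand, the bound \eqref{CE0} follows immediately from the upper bound in Theorem \ref{THM:1}(1) for any $c \in (0,c(\dl))$.
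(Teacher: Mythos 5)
Your proposal is correct and takes essentially the same route as the paper: the paper factors the variational compactness argument out into a separate stability lemma (Lemma \ref{LEM:sta}), proved exactly by your contradiction scheme (Gagliardo--Nirenberg gives $\mathcal{H}^1$-boundedness of a minimizing sequence, the compact embedding $\mathcal{H}^1(\R)\hookrightarrow L^q(\R)$ for finite $q\ge 2$ gives strong convergence and no escape of mass, and weak lower semicontinuity of the quadratic part finishes), and then combines it with the LDP upper bound of Theorem \ref{THM:1}(1). The only cosmetic difference is that you explicitly note $\mathcal{C}_\delta$ is closed (via Lipschitz continuity of the distance function), a point the paper leaves implicit.
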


This concentration shows that ground state solitary waves are the dominant macroscopic states, with all other configurations becoming exponentially unlikely in the low-temperature limit. As explained earlier, the Gibbs measure $\rho^D_{\eps,r}$ in \eqref{Gibbs4} is invariant under the flow of the equation \eqref{GP}, that is, $\Law(\psi(t))=\rho^D_{\eps,r}$ for every $t \in \R$. Consequently, high-probability events under $\rho^D_{\eps,r}$  reflect the long-time behavior of the dynamics governed by \eqref{GP}. In particular, the concentration estimate \eqref{CE0} implies that at low temperature, the solutions to \eqref{GP} are well approximated by
\begin{align*}
\psi(t) \approx Q+\text{small fluctuations},
\end{align*}

\noi
where $Q\in \M^D$. That is, the solution asymptotically decomposes into a ground state solitary wave plus small fluctuations.

\begin{remark}\rm 
The large mass condition $D \ge D^*$ in Theorems~\ref{THM:1} and \ref{THM:2} arises from ensuring the negativity of the minimal energy, $\inf_{M(\phi)=D}H(\phi)<0$, as discussed in Remark~\ref{REM:negmin}. Note that the argument in Remark~\ref{REM:negmin} shows that the minimal energy is always negative when $d\ge 3$, even without the large mass condition. However, to the best of the authors' knowledge, it remains unknown whether the minimal energy is negative for all masses $D>0$ in the lower-dimensional cases $d \le 2$. This negative minimal energy condition is used only in the proof of Proposition~\ref{PROP:WL2}, specifically in establishing \eqref{C000}.

\end{remark}


\subsection{Motivation and comments on the literature}\label{SUBSEC:COM}

\subsubsection{Large deviation principle for the mixed ensemble}

In \cite{EJOT}, Ellis, Jordan, Otto, and Turkington studied the mixed Gibbs ensemble associated with the equation 
\begin{align*}
i \dt \psi +L \psi +f(|\psi|^2) \psi=0
\end{align*}

\noi
on a bounded domain  $D \subset \R^d$, subject to appropriate boundary conditions. Here, $L$ is a linear operator whose negative spectrum $-L$ consists of positive eigenvalues $\{\ld_k\}_{k=1}^\infty$ satisfying $\sum_{k=1}^\infty$ $\frac{1}{\ld_k}<\infty$. A basic example is $L=\dx^2$ on a finite interval $[a,b]$. 
The class of nonlinearities $f$ considered satisfies $f(0)=0$, $\sup_{[0,\infty )} |f(a)|<\infty $, and $f'(a)>0$ (focusing condition). A typical example is $f(|\psi|^2)=\frac{|\psi|^2}{1+|\psi|^2}$. Note that this choice of linear operator $L$ excludes our case $\mathcal{L}=-\dx^2 +|x|^2$, since   $\mathcal{L}h_n=(1+2n)h_n$, where $h_n$ forms an orthonormal basis of $L^2(\R)$. Here, the eigenvalues $1+2n$ yield $\sum \frac{1}{1+2n}=\infty$, so the summability condition $\sum \frac{1}{\ld_k}$ fails. See \eqref{LZ2}.

In particular,  compared to the earlier work of Ellis, Jordan, Otto, and Turkington \cite{EJOT} on large deviation principles for mixed ensembles, where only bounded focusing interactions and non-singular conditioning $(M(\phi)<\infty)$ were considered, the present work extends the analysis to the setting of an unbounded quartic interaction and a conditional event involving an infinite renormalization constant.  


\subsubsection{Gross-Pitaevskii Gibbs measure and dynamical problem}

In \cite{BTT}, Burq, Thomann, and Tzvetkov studied the construction of the Gibbs measure \eqref{Gibbs1} on $\R$ for the focusing case $(\ld>0)$ with quartic interaction.  They also proved the invariance of the Gibbs measure under the deterministic flow of the dynamics \eqref{GP}. For the construction of Gibbs measures with higher-order focusing interaction on $\R$, that is,  $\frac{\ld}{p}\int_{\R} |\phi|^p dx$ for $p \ge 6$, in \cite{RSTW} Robert, Tolomeo, Wang, and the first author proved that the Gibbs measure cannot be constructed. Their result also establishes that the construction is only possible when $p<6$.  As for the focusing interaction on $\R^2$, see the results in \cite{Deng, RSTW} by Deng and by Robert, Tolomeo, Wang, and the first author, where the Gibbs measure can be constructed only for $p<3$, while non-construction is established for $p \ge 4$. See also \cite{DR23, DRTW} by Dinh–Rougerie and Dinh–Rougerie–Tolomeo–Wang for the study of focusing Gibbs measures associated with general anharmonic potentials $-\dx^2+|x|^s$, where $s>1$.


Regarding the defocusing case $(\ld<0)$, in \cite{BTT} Burq, Thomann, and Tzvetkov \cite{BTT} proved that the defocusing Gibbs measure is invariant under the corresponding defocusing dynamics \eqref{GP}. In a related stochastic setting, de Bouard, Debussche, and Fukuizumi \cite{DDF0, DDF1} studied the defocusing stochastic Gross–Pitaevskii equation on $\R$ and $\R^2$, formulated as the gradient flow of the Hamiltonian \eqref{Ha0} perturbed by space-time white noise. They proved the invariance of the defocusing Gibbs measures under the stochastic dynamics on both $\R$ and $\R^2$. See also the recent work of Deya-Fukuizumi-Thomann \cite{DFT25} establishing the local well-posedness of the stochastic Gross–Pitaevskii equation on $\R^3$. From the viewpoint of many-body quantum mechanics, related measures with sub-harmonic trapping potentials were studied in Lewin-Nam-Rougerie \cite{LNR18}.

Note that in the defocusing case $\ld<0$ in \eqref{Ha0}, the Hamiltonian $H(\phi)$ is coercive, that is, $H(\phi)>0$ unless $\phi=0$ (the minimizer is unique, given by $\phi=0$). As a result, there is no need to impose the conditioning event $\{ M^w(\phi)=D\}$ to construct the Gibbs measure. Therefore, the large deviation principle for the Gibbs ensemble can be established in a much simpler manner.


\subsubsection{Translation invariant Gibbs measures}
Thanks to the presence of the confining potential $|x|^2$ in the Hamiltonian $H$ in \eqref{Ha0},   the Gibbs measure \eqref{Gibbs1} is not translation invariant. As a result, the fields sampled from the Gibbs measure exhibit spatial decay at infinity. This decay allows for a direct construction of the infinite volume measure without the need for an infrared (large scale) cutoff.

In the absence of the harmonic potential $|x|^2$, the Hamiltonian becomes translation invariant, which in turn implies that the corresponding Gibbs measure is also translation invariant. In this case, a large field problem arises due to the lack of spatial decay at infinity. To address this issue,  one first constructs the finite volume measure $\rho_L$ on the torus $\T_L=\R / L\Z$ (infrared cutoff) and then takes the infinite volume limit $\rho_\infty$ as $L\to \infty$. In \cite{Rider}, Rider showed that translation invariance, combined with the strongly focusing nature of the interaction, leads to a trivial infinite volume measure $\rho_\infty=\dl_0$.  That is, the limiting measure places all of its mass on the zero path. This is in sharp contrast to the case with a harmonic potential, where the infinite volume Gibbs measure is already nontrivial.  See also the recent works \cite{TW, SSosoe, SSosoe1} for results on the infinite volume limit of focusing Gibbs measures in the translation-invariant setting.



\subsection{Structure of the proof}\label{SUBSEC:proof}
In this subsection, we present the structure of the proof.

\noi 
\textbf{Step~1~(Proposition \ref{PROP:GLDP}):} According to the definition of the conditional probability measure $\rho^D_{\eps,r}$ in \eqref{Gibbs4}, we first write
\begin{align}
\lim_{r\to 0}\lim_{\eps \to 0}\eps \log \rho^D_{\eps,r}(B)&=\lim_{r\to 0} \lim_{\eps \to 0} \eps \log \rho_{\eps,A}\big( B\cap \{M^w(\phi)\in [D-r,D+r]     \} \big) \notag \\
&\hphantom{X}-\lim_{r\to 0} \lim_{\eps \to 0} \eps \log \rho_{\eps,A}\big( \{M^w(\phi)\in [D-r,D+r]     \} \big).
\label{AG0}
\end{align}

\noi
In order to handle a general set $B$, specifically, the first term in \eqref{AG0}, we first establish a large deviation principle for the grand canonical ensemble $\rho_{\eps,A}$ \eqref{Gibbs5}
\begin{align}
\eps \log \rho_{\eps,A} (B) \approx -\inf_{\phi \in B} H^G(\phi),
\label{LPI0}
\end{align}

\noi
where $H^G$ is the grand canonical Hamiltonian in \eqref{Ha1}.



In the previous work \cite{EJOT} by Ellis, Jordan, Otto, and Turkington, the large deviation principle for the grand canonical ensemble was established by deriving it directly from the large deviation principle for the Gaussian measure $\mu_\eps$. This approach \cite[Theorem 4.4]{EJOT} was possible because the interaction potential they considered was bounded, allowing the large deviation behavior to follow automatically from that of the Gaussian measure. In contrast, our arguments in Section \ref{SEC:LDPG} address the more singular case where the potential is unbounded and involves an infinite counterterm in the taming part \eqref{potent}.


\noi 
\textbf{Step~2~(Propositions \ref{PROP:WL2} and \ref{PROP:free}):} In order to handle the conditional event in \eqref{Gibbs4}, we analyze two fundamental thermodynamic functions: (i) the microcanonical entropy
\begin{align}
\lim_{r\to 0} \lim_{\eps \to 0} \eps \log \rho_{\eps,A}\big( \{ M^w(\phi) \in [D-r,D+r]  \} \big)=-\inf_{M(\phi)=D} H^G(\phi), 
\label{SDE2}
\end{align}

\noi
and (ii) the free energy 
\begin{align*}
\lim_{\eps \to 0} \eps \log Z_{\eps,A}=-\inf_{\phi \in \mathcal{H}^1} H^G(\phi).
\end{align*}

\noi
A remarkable aspect of Proposition~\ref{PROP:GLDP} is that the diverging renormalization constants, appearing in the conditional event in \eqref{SDE2} and the taming part in \eqref{potent}, disappear at the level of the microcanonical entropy and the free energy. 


In the previous work \cite{EJOT} by Ellis, Jordan, Otto, and Turkington, the asymptotic behavior of the microcanonical entropy \cite[Proposition 4.5 (a)]{EJOT} was obtained via the large deviation principle for $\rho_{\eps,A}$ and a direct application of the contraction principle, relying on the continuity of the $L^2$-mass $M(\phi)$ on $L^2$.  In our case,  if the contraction principle were directly applicable, it would naturally yield a constraint involving the Wick-renormalized mass $M^w(\phi)$ in \eqref{SDE2}. However, such a constraint is not meaningful in the variational representation, when interpreted as an infimum constraint, that is, $\inf\limits_{M^w(\phi)=D} H^G(\phi)$. For this reason, we carry out a more careful and direct analysis of the microcanonical entropy, in which we explicitly remove the divergent renormalization constant in the low-temperature limit.


\noi 
\textbf{Step~3~(Section \ref{SEC:LDP}):} By combining \textbf{Step 1} and \textbf{Step 2}, we derive the large deviation principle for the mixed ensemble $\rho_{\eps,r}^D$ in \eqref{Gibbs4}. More precisely, for any $\phi$ satisfying $M(\phi)=D$ and arbitrary small $\dl>0$, it follows from \eqref{LPI0} and \eqref{SDE2} that as $\eps \to 0 $ and $r \to 0$,
\begin{align*}
\eps \log \rho^D_{\eps,r}(B(\phi,\dl))&= \eps \log \rho_{\eps,A}\big( B(\phi,\dl)\cap \{M^w(\phi)\in [D-r,D+r]     \} \big)\\
&\hphantom{X}-\eps \log \rho_{\eps,A}\big( \{M^w(\phi)\in [D-r,D+r]     \} \big)\\
&\approx -H^G(\phi)+\inf_{M(\phi)=D} H^G(\phi)\\
&=-H(\phi)+ \inf_{M(\phi)=D} H(\phi)=J^D(\phi),
\end{align*}

\noi
where $B(\phi, \dl)$ denotes the open ball with center $\phi$ and radius $\dl>0$ with respect to $\mathcal{S}=\mathcal{H}^{-\eta}(\R)$ or $L^p(\R)$, $p>2$.   After that, we extend the result from balls to general sets.

Once we obtain the large deviation principle (Theorem~\ref{THM:1}), Theorem~\ref{THM:2} follows as a consequence of Theorem~\ref{THM:1} together with the stability of minimizers.

\section{Notations and preliminary results}

\subsection{Notations}
When addressing regularities of functions and distributions, we  use $\eta > 0$ to denote a small constant. We usually  suppress the dependence on such $\eta > 0$ in estimates. For $a, b > 0$, $a\lesssim b$  means that
there exists $C>0$ such that $a \leq Cb$. By $a\sim b$, we mean that $a\lesssim b$ and $b \lesssim a$.

\subsection{Harmonic oscillator operator}

The operator $\mathcal{L}=-\dx^2+|x|^2$ has a positive self-adjoint extension on $L^2(\R)$ and has eigenfunctions $\{h_n\}_{n \ge 0}$ with
\begin{align} 
h_n (x) = (-1)^n c_n e^{\frac{x^2}2} \frac{d^n}{dx^n} (e^{-x^2})
\label{eigenf}
\end{align}

\noi
and $c_n = (n!)^{-\frac12} 2^{-\frac{n}2} \pi^{-\frac14}$. Then $\{h_n\}_{n \ge 0}$ is a complete normal basis of $L^2(\R)$. Let $\ld_n^2$ be the corresponding eigenvalues, that is,  $\mathcal{L} h_n = \lambda_n^2 h_n$. Then, it follows from \cite{BTT} that 
\begin{align}
\lambda_n =  \sqrt{1+2n}.
\label{LDG0}
\end{align} 

\noi
We have the following estimates on the eigenfunctions $h_n$ from \cite{YZ}
\begin{align}\label{hn}
\| h_n(x) \|_{L^p(\R)}
\lesssim \begin{cases}
\lambda_n^{-\frac13 + \frac2{3p}} & \textup{if } 2 \le p \le 4\\
\lambda_n^{-\frac16} &  \textup{if }  p \ge 4.
\end{cases},
\end{align}

\noi 
uniformly in $n\in\N$, $p\ge 2$.  We define the Sobolev spaces associated to the operator $\mathcal{L}$.
\begin{definition}
\label{DEF:sob}
For $1\le p \le \infty$
and $s \in \R$,
we define the harmonic Sobolev space $\W^{s,p} (\R)$
by the norm
\begin{align*}
\| u\|_{\W^{s,p} (\R)} = \| \mathcal{L}^{\frac{s}2} u\|_{L^p (\R)}.
\end{align*}

\noi
When $p=2$,
we write $\W^{s,2} (\R) = \H^s (\R)$
and for $u = \sum_{n=0}^\infty c_n h_n$ we have
$\|u \|_{\H^s (\R)}^2 = \sum_{n=0}^\infty \ld^{2s}_n|c_n|^2$.
\end{definition}

\noi
We recall the following Gagliardo-Nirenberg-Sobolev inequality in the harmonic Sobolev space. See \cite{RSTW}. For any finite $p>2$,
\begin{align}
\|u\|_{L^{p}(\R)}^p  \les  \| u\|^{\frac{p-2}{2}}_{\H^1 (\R)}\|u\|^{1+ \frac{p}2}_{L^2(\R)}.
\label{GNS}
\end{align}

\subsection{Gaussian measure associated with the harmonic oscillator}\label{SUBSEC:Gauss}
We define the Gaussian measure  $\mu_\eps $  whose Cameron-Martin space is $\mathcal{H}^1(\R)$, that is, covariance operator $\eps \mathcal{L}^{-1}$, formally given by
\begin{align}
d\mu_\eps =  Z_{\eps}^{-1} e^{-\frac 1{2\eps} \jb{\mathcal{L}\phi,\phi }_{L^2(\R)} }\prod_{x\in \R} d\phi(x)=Z_\eps^{-1} \prod_{n=0}^\infty e^{-\frac1{2\eps} \ld^2_n |\phi_n|^2} d \phi_n,
\label{Gaussian}
\end{align}

\noi
where $\eps>0$   denotes the temperature and  $d\phi_n$ is the Lebesgue measure on $\mathbb C$. This Gaussian measure $\mu_\eps$ is the induced probability measure under the map
\begin{align}
\label{maps}
\o \in \O \longmapsto u^\o = \sum_{n\ge 0} \frac{\sqrt{\eps} g_n (\o)}{\ld_n} h_n,
\end{align}

\noi
where $\{g_n\}_{n \in \mathbb N}$ is a sequence of independent standard complex-valued Gaussian random variables on a probability space $(\O, \F, \PP)$.
To define the Gaussian measure $\mu_\eps$ in \eqref{Gaussian} rigorously, we first introduce a finite-dimensional approximation. We begin by defining the spectral projector $\P_N$ 
\begin{align*}
\P_N u = \P_N \Big( \sum_{n=0}^\infty u_n h_n \Big) = \sum_{n=0}^N u_n  h_n,
\end{align*}

\noi
whose image is the finite dimensional space $E_N = \textup{span} \{h_0, h_1, \cdots, h_N\}$. By setting $u_N^\o=\P_N u^\o$, we have $\Law(u_N^\o)=\mu_{\eps,N}$, that is,  the pushforward of $\mu_\eps$ under $\P_N$, where 
\begin{align*}
\o \mapsto \P_Nu^\o= u_N^\o  :  = \sum_{n=0}^N \frac{g_n (\o)}{\ld_n}  h_n  .
\end{align*}

\noi
Then, given any $s >0$,  the sequence $\{ u_N^\o \}_{N \ge 1} $ is a Cauchy sequence in $L^2 (\O; \H^{- s} (\R))$ converging to $u^\o$ given in \eqref{maps}.

\noi 
It follows from \eqref{LDG0}, \eqref{maps}, and \eqref{hn} that 
\begin{align}
\E_{\mu_\eps} \Big[\| \phi\|^2_{L^2 (\R)} \Big] = \eps \sum_{n=0}^\infty \frac1{\ld_n^2}  = \infty,
\label{LZ2}
\end{align}

\noi
which implies that a typical function $\phi$ in the support of $\mu_\eps$ is not square integrable, that is, $\mu_\eps(L^2(\R))=0$. Hence, in order to define the conditional event $\{M(\phi)=D \}$ in \eqref{Gibbs1}, it is necessary to renormalize the $L^2$-norm  $\int_\R |u|^2 dx$. Given $x \in \R$, $u_N^\o(x)$  
is a mean-zero complex-valued Gaussian random variable with variance
\begin{align}
    \label{variance}
    \s_N (x) = \E \big[ |u_N^\o (x)|^2 \big] = \sum_{0 \le n \le N} \frac{h_n^2(x)}{\ld_n^2} ,
\end{align}

\noi
from which we have 
\begin{align*}
\E_{\mu_{\eps,N}} \Big[\| \phi\|^2_{L^2 (\R)} \Big]=\int_\R \s_N (x) dx = \sum_{0\le n \le N} \frac{1}{\ld_n^2}  \sim \log N \to \infty 
\end{align*}

\noi
as $N \to \infty$.
Here, $\s_N$ depends on $x\in \R$ since the Gaussian process $u^\o$ given by \eqref{maps} is not translation invariant. We can then define the Wick power $\wick{|\phi_N|^2}(x)$ via
\begin{align}
\wick{|\phi_N(x)|^2} =  |\phi_N(x)|^2 - \s_N(x).
\label{Wick}
\end{align}

\noi
It is also known, see for instance \cite[Lemma 3.6]{BTT}, that $\int_\R :|\phi_N (x)|^2: dx $ forms a Cauchy sequence in  $L^2(\H^{-s} (\R), d\mu_\eps)$ and converges to a limit, denoted by $\int_\R :|\phi (x)|^2: dx $, for any $s >0$.

\noi
On the one hand, thanks to the decay property of the eigenfunctions $h_n$ as $n\to \infty$, given in \eqref{hn},  it follows from \cite[Corollary 2.4 (i)]{RSTW} that
\begin{align*}
\E_{\mu_\eps} \Big[\| \phi \|_{L^p (\R)}^p \Big] < \infty
\end{align*}

\noi
for any finite $p>2$. Therefore, the potential energy $\frac1{p} \int_\R |u|^p dx$ in \eqref{Ha0} does not require renormalization. This implies that $\supp \mu_\eps \subset \mathcal{H}^{-s}(\R) \cap L^p(\R)$ for any $s>0$ and finite $p>2$.

\subsection{Variational characterization of the minimizers}
In this subsection, we present the family of minimizers of the Hamiltonian $H$ in \eqref{Ha0} under the constraint $M(\phi)=D$, where $D>0$. We define
\begin{align}
I(D)=\inf_{M(\phi)=D } H(\phi),
\label{Min0}
\end{align}

\noi
where $H$ is the Hamiltoninan $H$ given in \eqref{Ha0} and  $M$ is the $L^2$ mass defined in \eqref{L2mass}. Then, the minimization problem admits a family of minimizers.
\begin{lemma}\label{LEM:Min}
For every $D>0$, there exists $Q=Q_D$ in $\mathcal{H}^1(\R)$ such that $\|Q \|_{L^2(\R)}^2=D$ and
\begin{align*}
I(D)=\frac{1}{2} \int_{\R} |\dx Q|^2 dx+\frac 12 \int_{\R} |x|^2 |Q|^2 dx-\frac{\ld}{4} \int_{\R}|Q|^4 dx.
\end{align*}

\end{lemma}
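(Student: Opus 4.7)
The plan is to apply the direct method of the calculus of variations. The harmonic confinement plays a crucial role: since $\mathcal{L}=-\dx^2+|x|^2$ has compact resolvent on $L^2(\R)$, the embedding $\mathcal{H}^1(\R)\embeds L^p(\R)$ is compact for every finite $p\ge 2$. This removes any concentration--compactness concern (no mass can escape to infinity due to the trap $|x|^2$), and it provides strong convergence of the focusing quartic term along a minimizing sequence, which is the key analytic input.

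\emph{Step 1 (Coercivity and $I(D)>-\infty$).} Using the harmonic Gagliardo--Nirenberg--Sobolev inequality \eqref{GNS} with $p=4$ together with the constraint $\|\phi\|_{L^2(\R)}^2=D$, one has
\begin{align*}
\frac{\ld}{4}\|\phi\|_{L^4(\R)}^4 \;\les\; \ld D^{3/2}\|\phi\|_{\mathcal{H}^1(\R)} \;\le\; \tfrac{1}{4}\|\phi\|_{\mathcal{H}^1(\R)}^2 + C(\ld,D),
\end{align*}
by Young's inequality. Since $\tfrac{1}{2}\|\phi\|_{\mathcal{H}^1(\R)}^2 = \tfrac{1}{2}\int|\dx\phi|^2\,dx + \tfrac{1}{2}\int|x|^2|\phi|^2\,dx$, this gives $H(\phi)\ge \tfrac{1}{4}\|\phi\|_{\mathcal{H}^1(\R)}^2 - C(\ld,D)$. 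In particular $I(D)>-\infty$ and any minimizing sequence $\{\phi_n\}\subset\mathcal{H}^1(\R)$ (with $\|\phi_n\|_{L^2}^2=D$, $H(\phi_n)\to I(D)$) is uniformly bounded in $\mathcal{H}^1(\R)$.

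\emph{Step 2 (Extraction and passage to the limit).} Extract a subsequence with $\phi_n\rightharpoonup Q$ weakly in $\mathcal{H}^1(\R)$. By the compact embedding $\mathcal{H}^1(\R)\embeds L^p(\R)$ recalled above, $\phi_n\to Q$ strongly in $L^2(\R)$ and in $L^4(\R)$. Strong $L^2$ convergence preserves the mass constraint, so $\|Q\|_{L^2(\R)}^2=D$ and $Q$ is admissible. Strong $L^4$ convergence yields $\int|\phi_n|^4\,dx\to\int|Q|^4\,dx$, while weak lower semicontinuity of the quadratic form $\tfrac{1}{2}\|\cdot\|_{\mathcal{H}^1(\R)}^2$ gives $\liminf_n \tfrac12\|\phi_n\|_{\mathcal{H}^1}^2 \ge \tfrac12\|Q\|_{\mathcal{H}^1}^2$. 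Combining,
\begin{align*}
H(Q) \;\le\; \liminf_{n\to\infty} H(\phi_n) \;=\; I(D),
\end{align*}
so $Q$ attains the infimum, proving the lemma.

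\emph{Expected main obstacle.} The only delicate point is Step 1: for a focusing nonlinearity one must rule out $H\equiv -\infty$ on the mass sphere. This is exactly where the $L^2$-subcritical nature of the quartic interaction on $\R$ enters through the exponent $(p-2)/2=1$ in \eqref{GNS}, which is strictly smaller than $2$ and therefore controllable by Young against the Hamiltonian quadratic part. This variational threshold is the deterministic counterpart of the condition $p<6$ for the existence of the focusing Gibbs measure recalled in Subsection~\ref{SUBSEC:COM}. Once Step 1 is in place, Step 2 is essentially free thanks to the compactness of the resolvent of $\mathcal{L}$.
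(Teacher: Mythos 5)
Your proof is correct and is precisely the standard direct method in the harmonically trapped setting; the paper itself does not write out an argument but simply cites Zhang, \cite[Theorem~3.1]{Zhang}, whose proof is this same scheme (coercivity via the $L^2$-subcritical Gagliardo--Nirenberg bound, then compactness of $\mathcal{H}^1(\R)\hookrightarrow L^p(\R)$ to pass to the limit without mass loss). Your Steps~1 and~2 match that route, and your remark on the role of the exponent $(p-2)/2=1<2$ correctly identifies the $L^2$-subcritical threshold that makes the infimum finite on the mass sphere.
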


For the proof of Lemma \ref{LEM:Min}, see \cite[Theorem 3.1]{Zhang}.

\begin{remark}\rm 
In the absence of the harmonic potential, the set of minimizers forms a two-dimensional manifold.  That is, if $R$ is a minimizer of \eqref{Min0} without the harmonic potential, then there exist $x_0\in\R$ and $\dr \in [0,2\pi]$ such that $R(x)=e^{i\dr} Q(x-x_0)$, where $Q$ is a fixed minimizer of \eqref{Min0}.

In the presence of the harmonic potential, it is clear that $\{e^{i \dr} Q \}_{\dr \in [0,2\pi]} \subset \mathcal{M}_D$, where $\M_D$ denotes the set of minimizers. 
Note that the Hamiltonian \eqref{Ha0} is no longer translation invariant, and thus translation is not a symmetry of the problem \eqref{Min0}. However, to the authors' knowledge, it is not known whether $\{e^{i \dr} Q \}_{\dr \in [0,2\pi]}=\mathcal{M}_D$. See \cite[Remark 1.3]{Fuk}.   
\end{remark}

\begin{remark}\rm
\label{REM:negmin}
Take a fixed Schwartz function $\phi$ that is positive  and  $\|\phi \|^2_{L^2(\R)}=1$. Define the scaling 
\begin{align*}
\phi_\zeta(x):=\sqrt{D} \zeta^{-\frac 12} \phi(\tfrac x\zeta).
\end{align*}

\noi
This ensures $ \| \phi_{\zeta} \|_{L^2(\R^d)}^2=D$ for any $\zeta>0$. Note that 
\begin{align*}
H(\phi_\eps)=\frac D2 \zeta^{-2} \int_{\R} |\nb \phi|^2 dx  +\frac {D^2}2 \zeta^2 \int_{\R} |x|^2 |\phi|^2 dx-\frac {\ld D^2}4 \zeta^{-1} \int_{\R} |\phi|^4 dx. 
\end{align*}

\noi
In the absence of a harmonic potential term, the focusing interaction (quartic term) becomes dominant as $\zeta \to \infty$. Consequently, the minimal energy of the corresponding Hamiltonian is always negative.

\noi 
In the presence of a harmonic potential term, by taking $\zeta \to 0$ and $D \to \infty $ so that the focusing interaction term becomes dominant, we obtain
\begin{align*}
\inf_{ M(\phi)=D}H(\phi) \le  H(\phi_\eps) <0.
\end{align*}

\noi 
This implies that  there exists $D^*>0$ such that for any $D>D^*$,
\begin{align}
\inf_{ M(\phi)=D}H(\phi) <0.
\label{D0}
\end{align} 

\noi
In other words, the minimal energy becomes negative for sufficiently large mass. \noi
In what follows, we use the condition that the minimal energy is negative, particularly in Proposition \ref{PROP:WL2} (see \eqref{C000}).  
\end{remark}

We now study the coercive structure of the grand canonical Hamiltonian $H^G$ in \eqref{Ha1} under a sufficiently large chemical potential $A$.
It follows from the Gagliardo-Nirenberg-Sobolev inequality \eqref{GNS} and Young's inequality that 
\begin{align}
H^G(\phi)\ge \Big( \frac 12-\dl \Big)\bigg(\int_{\R}|\dx \phi|^2 dx+\frac 12 \int_{\R} |x|^2 |\phi|^2 dx \bigg) +(A-c(\dl))\bigg(\int_{\R} |\phi|^2 dx\bigg)^3 \ge 0,
\label{coer0}
\end{align}

\noi
where $\dl>0$ is small and $c(\dl)$ is a large constant depending on $\dl>0$. 
This implies that $H^G(\phi)>0$ unless $\phi=0$, provided that the chemical potential 
$A$ is sufficiently large, that is, $A>c(\dl)$. Therefore, we obtain the following lemma.
\begin{lemma}\label{LEM:coercive}
There exists $A_0>0$ such that for any $A \ge A_0$, the grand canonical Hamiltonian $H^G$ in \eqref{Ha1} has the unique minimizer $\phi=0$.
\end{lemma}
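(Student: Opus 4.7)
The strategy is to show that the negative quartic focusing term $-\tfrac{\ld}{4}\|\phi\|_{L^4(\R)}^4$ in $H^G$ can be absorbed, via interpolation, into a small fraction of the quadratic form $\tfrac{1}{2}\int_\R |\dx \phi|^2 dx + \tfrac{1}{2}\int_\R |x|^2|\phi|^2 dx$ together with a multiple of $\|\phi\|_{L^2(\R)}^6$. Once that absorption is under control, one tunes the chemical potential $A$ so that the taming term $A\|\phi\|_{L^2(\R)}^6$ strictly dominates the remainder coming from Young's inequality, which then yields a strictly positive lower bound for $H^G(\phi)$ whenever $\phi \neq 0$.

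The first step is to apply the harmonic Gagliardo-Nirenberg-Sobolev inequality \eqref{GNS} with $p=4$ to obtain
\begin{align*}
\tfrac{\ld}{4}\|\phi\|_{L^4(\R)}^4 \les \|\phi\|_{\mathcal{H}^1(\R)}\, \|\phi\|_{L^2(\R)}^3.
\end{align*}
I would use the identity $\|\phi\|_{\mathcal{H}^1(\R)}^2 = \jb{\mathcal{L}\phi, \phi}_{L^2(\R)} = \int_\R |\dx \phi|^2 dx + \int_\R |x|^2 |\phi|^2 dx$, which ties the harmonic Sobolev norm directly to the quadratic part of the Hamiltonian \eqref{Ha1}. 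The second step is Young's inequality with conjugate exponents $2$ and $2$, which gives, for any small $\dl > 0$,
\begin{align*}
\|\phi\|_{\mathcal{H}^1(\R)}\, \|\phi\|_{L^2(\R)}^3 \le \dl\, \|\phi\|_{\mathcal{H}^1(\R)}^2 + c(\dl)\, \|\phi\|_{L^2(\R)}^6.
\end{align*}
Substituting into \eqref{Ha1} reproduces the coercivity estimate already recorded in \eqref{coer0}.

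To conclude, I would fix $\dl \in (0, \tfrac{1}{2})$ once and for all and set $A_0 := c(\dl) + 1$. For any $A \ge A_0$, both coefficients on the right-hand side of \eqref{coer0} are strictly positive, so $H^G(\phi) \ge 0$ on $\mathcal{H}^1(\R)$, with equality if and only if both $\|\phi\|_{\mathcal{H}^1(\R)} = 0$ and $\|\phi\|_{L^2(\R)} = 0$, that is, $\phi \equiv 0$. For $\phi$ outside $\mathcal{H}^1(\R)$ the quadratic form is infinite, so $H^G(\phi) = +\infty$ and such $\phi$ cannot be a minimizer. Hence $\phi = 0$ is the unique minimizer.

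There is no deep analytical obstacle here; the only delicate point is the bookkeeping of exponents. The GNS inequality \eqref{GNS} at $p=4$ is precisely at the threshold where Young's inequality produces an $\|\phi\|_{L^2(\R)}^6$ remainder paired with an $\mathcal{H}^1$-power equal to $2$. This is exactly why the optimal taming exponent chosen in \eqref{Ha1} is the cubic power in $\|\phi\|_{L^2(\R)}^2$ (cf.\ Remark \ref{REM:Ggibb}); any less aggressive taming would fail the Young absorption step and coercivity would break down.
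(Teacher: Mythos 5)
Your proposal is correct and follows essentially the same argument as the paper: apply the harmonic Gagliardo-Nirenberg-Sobolev inequality \eqref{GNS} at $p=4$, absorb the quartic term via Young's inequality to reproduce the coercivity bound \eqref{coer0}, and then choose $A_0 > c(\dl)$ to force $H^G(\phi)\ge 0$ with equality only at $\phi=0$. The only cosmetic difference is that you also spell out that $H^G(\phi)=+\infty$ off $\mathcal{H}^1(\R)$, which the paper leaves implicit.
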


\begin{remark}\rm \label{REM:Ggibb}
The construction of the grand canonical Gibbs measure \eqref{Gibbs5} and the role of
the Gagliardo-Nirenberg-Sobolev inequality \eqref{GNS} can be understood heuristically in terms of the associated functional integral, ignoring the renormalization
\begin{align}
Z_{\eps,A}=\int e^{\frac \ld{4\eps} \int_{\R} |\phi|^4 dx } e^{-\frac A\eps\big(\int_{\R} |\phi|^2  dx\big)^3}   e^{-\frac 1{2\eps} \jb{\mathcal{L}\phi,\phi  } }  \prod_{x\in \R} d\phi(x).
\label{funinte}
\end{align}

\noi
By applying the Gagliardo-Nirenberg-Sobolev inequality \eqref{GNS} and Young’s inequality, we can control the quartic interaction as follows: for any $\ld>0$,
\begin{align*}
\frac \ld4 \|\phi \|_{L^4(\R)}^4 \le \dl \|\phi \|^2_{\mathcal{H}^1(\R)}+c(\dl)\| \phi \|_{L^2(\R)}^6,
\end{align*}

\noi
where $\dl>0$ is small and $c(\dl)$ is a large constant depending on $\dl>0$.  This implies that 
\begin{align*}
Z_{\eps,A}\le \int e^{-\frac 1\eps(A-c(\dl))\big(\int_{\R} |\phi|^2  dx\big)^3  } e^{-\frac 1\eps \big(\frac 12-\dl\big)\jb{\mathcal{L}\phi,\phi  } }   \prod_{x\in \R} d\phi(x).
\end{align*}

\noi
Therefore, when the chemical potential $A$ is sufficiently large, that is, $A>c(\dl)$,
the functional integral is heuristically integrable with respect to the Lebesgue measure. Based on this idea, we can follow the proof in \cite[Section 4]{OSeoT} to construct the grand canonical Gibbs measure \eqref{Gibbs5}. In particular, the choice of $\g=6$ in the taming term $A \|\phi \|_{L^2(\R)}^\g$ with $A \ge A_0$ is optimal in view of the Gagliardo-Nirenberg-Sobolev inequality \eqref{GNS}. When $\g<6$ or $\g=6$ with $A$ sufficiently small, the taming effect in \eqref{funinte}  is insufficient to control the focusing quartic interaction. Therefore, in this case, we expect $Z_{\eps,A}=\infty$. 

\end{remark}

\subsection{Tools from stochastic analysis}

We recall a variational representation of the partition function for Gibbs measures, similarly to \cite{BG, BG1, OSeoT, TW}.

Let $X(t)$ denote a cylindrical Brownian motion in $L^2(\R)$, defined by
\begin{align*}
X(t) = \sum_{n\ge 0} B_n (t) h_n,
\end{align*}

\noi
where $\{h_n\}_{n\ge 0}$ is the sequence of eigenfunctions of $\mathcal{L}$ given in \eqref{eigenf} and $\{B_n\}_{n \ge 0}$ is a sequence of mutually independent complex-valued Brownian motions. We define a centered Gaussian process $W(t)$ by
\begin{align*}
W (t) = \mathcal{L}^{-\frac12} X(t) = \sum_{n\ge 0} \frac{ B_n (t)}{\ld_n} h_n.
\end{align*}

\noi
Note that $\textup{Law} (W(1)) = \mu_1=\mu$, where $\mu_1$ is the Gaussian measure defined in \eqref{Gaussian}.
In what follows, we set $W_N=W_N (1)= \P_{ N}  W (1)$ and thus $\textup{Law} (W_N(1)) = (\P_{ N})_*\mu$.

\noi 
Let $\mathbb{H}_a$ be the space of drifts,
which consists of progressively measurable processes belonging to 
$L^2 ([0,1]; L^2 (\R^d))$, $\mathbb P$-almost surely. We are now ready to present the variational representation of partition functions, known as the Boué-Dupuis variational formula \cite{BD, Ust}.

\begin{lemma}
\label{LEM:var}
Suppose that $F: C^\infty (\R) \to \R$ is measurable
such that $\E \big[ | F(W_N (1))|^p\big] < \infty$ and 
$\E \big[ | e^{- F(W_N (1))}|^q \big] < \infty$ for some $1 < p,q < \infty$ 
with $\frac1p + \frac1q =1$. Then, we have
\begin{align}
\label{var}
-\log \E \Big[ e^{F(W_N )} \Big] = \inf_{u \in \mathbb{H}_a} \E
\bigg[F\big(W_N + \P_N Z(u)  \big) + \frac12 \int_0^1 \| u (t) \|_{L^2_x (\R)}^2 dt\bigg],
\end{align}

\noi
where $W_N:=\P_N W(1)$ and $Z(u):=Z(u)(1)$ is defined by
\begin{align}
Z (u) (t) = \int_0^t \mathcal{L}^{-\frac12} u(\tau) d\tau.
\label{defZu}
\end{align}

\noi
Here,  the expectation $\E = \E_{\mathbb P}$ is respect to the underlying probability measure $\mathbb P$.
\end{lemma}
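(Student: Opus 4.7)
The plan is to derive this identity from the Donsker--Varadhan Legendre duality between log-Laplace functionals and relative entropy, combined with Girsanov's theorem applied to the cylindrical Brownian motion $X$. The starting point is the general variational identity
\begin{align*}
-\log \E_{\mathbb P}\bigl[e^{-\Phi}\bigr] = \inf_{\mathbb Q \ll \mathbb P} \Bigl\{ \E_{\mathbb Q}[\Phi] + H(\mathbb Q \,|\, \mathbb P) \Bigr\},
\end{align*}
valid for bounded $\Phi$ and extendable to $\Phi$ satisfying the stated integrability hypotheses by approximation, where $H$ denotes relative entropy. Applied with $\Phi = -F(W_N)$ (up to the sign convention used in the statement), this reduces the problem to parametrizing the absolutely continuous probability measures on the underlying path space and computing their relative entropy in terms of drifts.

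Next, for each drift $u \in \Ha$, first assumed to satisfy Novikov's condition so that Girsanov applies verbatim, I would form the exponential martingale
\begin{align*}
M_1^u = \exp\biggl( \int_0^1 \jb{u(t), dX(t)}_{L^2_x(\R)} - \frac{1}{2} \int_0^1 \|u(t)\|_{L^2_x(\R)}^2 \, dt \biggr),
\end{align*}
and set $d\mathbb Q_u = M_1^u \, d\mathbb P$. By Girsanov's theorem, $\wt X(t) = X(t) - \int_0^t u(s)\,ds$ is a cylindrical Brownian motion under $\mathbb Q_u$; applying $\mathcal L^{-\frac12}$ and then $\P_N$ shows that the law of $W_N$ under $\mathbb Q_u$ coincides with the law of $W_N + \P_N Z(u)$ under $\mathbb P$, where $Z(u)$ is as in \eqref{defZu}. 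A direct computation using $\log M_1^u$ and the $\mathbb Q_u$-Brownian motion $\wt X$ yields $H(\mathbb Q_u \,|\, \mathbb P) = \tfrac12 \E_{\mathbb Q_u}\bigl[\int_0^1 \|u(t)\|_{L^2_x(\R)}^2 dt\bigr]$. Substituting these two identifications into the Legendre formula produces the right-hand side of \eqref{var}, while the reverse inequality follows from the martingale representation theorem, which guarantees that every finite-entropy $\mathbb Q \ll \mathbb P$ is of the form $\mathbb Q_u$ for some adapted $u$.

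The main technical obstacle is removing the Novikov assumption and accommodating the unbounded functional $F$. The standard remedy is to truncate $u$ and $F$, apply the formula in the bounded regime, and then pass to the limit using Fatou's lemma in one direction and dominated convergence in the other; the hypotheses $\E[|F(W_N)|^p] < \infty$ and $\E[|e^{-F(W_N)}|^q] < \infty$ with $\tfrac{1}{p} + \tfrac{1}{q} = 1$ are precisely what make both sides of \eqref{var} finite and what justify these limits via Young's inequality applied to the product $F(W_N + \P_N Z(u)) \cdot (d\mathbb Q_u / d\mathbb P)$. A minor subtlety worth flagging is that the infimum need not be attained in $\Ha$, but any minimizing sequence is automatically bounded in $L^2([0,1]; L^2(\R))$ by the finiteness of the right-hand side, which is all that is needed for the applications of \eqref{var} in subsequent sections.
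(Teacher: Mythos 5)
The paper does not actually prove Lemma \ref{LEM:var}: it states the Bou\'e--Dupuis formula as a known tool and cites \cite{BD,Ust} for the proof. Your sketch therefore supplies something the paper delegates to the references. The route you take---Donsker--Varadhan entropy duality on path space, Girsanov to identify the relative entropy $H(\mathbb Q_u\,|\,\mathbb P)=\tfrac12\E_{\mathbb Q_u}\int_0^1\|u(t)\|_{L^2}^2\,dt$, martingale representation to show every finite-entropy $\mathbb Q\ll\mathbb P$ arises as some $\mathbb Q_u$, then approximation to handle unbounded $F$ and drop Novikov---is essentially \"Ust\"unel's/Lehec's proof of the formula, i.e.\ the approach of one of the two cited sources (the original Bou\'e--Dupuis argument proceeds instead via time discretization and weak-convergence methods for large deviations). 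So you are on a standard, correct track.

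Two points worth tightening. First, the assertion that ``the law of $W_N$ under $\mathbb Q_u$ coincides with the law of $W_N+\P_N Z(u)$ under $\mathbb P$'' is not literally correct when $u$ is a random adapted drift: $u$ is a functional of the $\mathbb P$-Brownian motion $X$, and $X$ itself changes law under $\mathbb Q_u$. What is true is that $\wt X = X - \int_0^{\cdot}u\,ds$ is a cylindrical BM under $\mathbb Q_u$ and $u$ can be rewritten as an $\wt X$-adapted process; one then has the identity in expectation
\begin{align*}
\E_{\mathbb Q_u}\Bigl[F(W_N)+\tfrac12\textstyle\int_0^1\|u\|_{L^2}^2\,dt\Bigr]
= \E_{\mathbb P}\Bigl[F\bigl(W_N+\P_N Z(\widehat u)\bigr)+\tfrac12\textstyle\int_0^1\|\widehat u\|_{L^2}^2\,dt\Bigr]
\end{align*}
for a suitable re-parametrized drift $\widehat u\in\Ha$, and the infimum over $u\in\Ha$ equals the infimum over $\widehat u\in\Ha$. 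With this repair the argument goes through; it is the standard fix and not a fundamental gap. Second, the left-hand side of \eqref{var} should read $-\log\E[e^{-F(W_N)}]$ (as in Bou\'e--Dupuis and Barashkov--Gubinelli, and as actually used in Lemma~\ref{LEM:Gama}); your substitution ``$\Phi=-F(W_N)$, up to the sign convention'' implicitly corrects for this typo, but it would be clearer to flag it explicitly.
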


\section{Large deviation principles for the grand canonical ensemble}\label{SEC:LDPG}

\subsection{Laplace principle for the grand canonical ensemble}

In this section, we establish a Large deviation principle for the grand canonical ensemble in the low-temperature limit $\eps \to 0$. Since the large deviation principle is equivalent to the Laplace principle, we derive a Laplace principle for the grand canonical ensemble.

\begin{proposition}\label{PROP:GLDP}

Let $\ld>0$ and $A \ge A_0$, where $A_0$ is given in Lemma \ref{LEM:coercive}. As $\eps \to 0$, the family $\rho_{\eps,A}$ satisfies a Laplace principle with rate $\eps$ and rate function
\begin{equation}\label{rateG}
\begin{split}
J^G(\phi)=
\begin{cases}
\frac{1}{2}\int_{\R} |\dx \phi|^2 dx+\frac 12 \int_{\R} |x|^2 |\phi|^2 dx-\frac \ld4 \int_{\R} |\phi|^4 dx+A\Big(  \int_{\R} |\phi|^2 dx \Big)^3 \quad &\text{if} \quad  \phi\in \mathcal{H}^1(\R), \\
\infty \quad &\text{otherwise}.
\end{cases}
\end{split}
\end{equation}

\noi 
More precisely, for any continuous and bounded $f: \mathcal{S}'(\R) \to \R$, we have 
\begin{align}
\lim_{\eps \to 0} -\eps \log \int  e^{-\frac 1\eps f(\phi)}\rho_{\eps,A}(d\phi)=\inf_{\phi \in S'}\big\{ f(\phi)+J^G(\phi)  \big\}.
\label{L1}
\end{align}

\end{proposition}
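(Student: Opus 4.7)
The plan is to reduce the Laplace principle for $\rho_{\eps,A}$ to two variational limits, both analyzed via the Boué–Dupuis formula of Lemma~\ref{LEM:var}. Since $\rho_{\eps,A} = Z_{\eps,A}^{-1} e^{-V/\eps}\mu_\eps$, we write
\[
-\eps \log \int e^{-\frac{1}{\eps} f(\phi)}\,\rho_{\eps,A}(d\phi) = \Big(-\eps \log \int e^{-\frac{1}{\eps}(f+V)(\phi)}\,\mu_\eps(d\phi)\Big) - \big(-\eps \log Z_{\eps,A}\big),
\]
and it suffices to prove that the first term converges to $\inf_{\psi\in\H^1(\R)}\{f(\psi) + J^G(\psi)\}$ and the second to $\inf_{\psi\in\H^1(\R)} J^G(\psi) = 0$; the latter is immediate from Lemma~\ref{LEM:coercive}, since $J^G = H^G$ is uniquely minimized at $\psi = 0$.

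\textbf{Boué--Dupuis after Gaussian rescaling.} I would apply Lemma~\ref{LEM:var} after the rescaling $\mu_\eps = \Law(\sqrt{\eps}\,W(1))$, reparametrizing the drift by $v = \sqrt{\eps}\,u$ (using linearity of $Z(\cdot)$). Writing $V_N$ for the spectrally truncated Wick potential, this produces
\[
-\eps \log \int e^{-\frac{1}{\eps}(f+V_N)}\,d\mu_\eps = \inf_{v\in\Ha}\E\Big[(f+V_N)\big(\sqrt{\eps}\,W_N + \P_N Z(v)\big) + \tfrac{1}{2}\int_0^1\|v(t)\|_{L^2(\R)}^2\,dt\Big],
\]
and analogously for $-\eps \log Z_{\eps,A}$ with $f\equiv 0$. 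As $\eps \to 0$ with $N$ fixed, $\sqrt{\eps}\,W_N \to 0$ in $L^p(\R)\cap\H^{-\eta}(\R)$, and the Wick mass correction $\eps\int :\!|W_N|^2\!:\,dx$ vanishes in $L^2(\PP)$ at rate $\sqrt{\eps}$. Consequently the Wick counterterm disappears at leading order and $V_N(\sqrt{\eps}\,W_N + \P_N Z(v)(1))$ converges to the classical $V(\P_N Z(v)(1))$ without renormalization.

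\textbf{Matching bounds.} For the upper bound, test the infimum with the deterministic drift $v_*(\tau) \equiv \mathcal{L}^{\frac{1}{2}}\psi$ for $\psi\in C_c^\infty(\R)$; this gives $Z(v_*)(1) = \psi$ at the optimal cost $\tfrac{1}{2}\|\psi\|_{\H^1(\R)}^2$, and continuity of $f$ on $\mathcal{S}'(\R)$ together with the convergences above and density of test functions in $\H^1(\R)$ yields the matching $\limsup \leq \inf\{f+J^G\}$. For the lower bound, I would use the coercivity estimate~\eqref{coer0} for $H^G$ together with the Gagliardo–Nirenberg–Sobolev inequality~\eqref{GNS} and Young's inequality to absorb the focusing quartic term into a fraction of $\|\P_N Z(v)(1)\|_{\H^1(\R)}^2 + A\|\P_N Z(v)(1)\|_{L^2(\R)}^6$, modulo stochastic errors that vanish in expectation. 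This yields uniform $L^2_{t,x}$ bounds on $\eps$-nearly-optimal drifts and hence weak sequential tightness; lower semicontinuity of $J^G$ on $\H^1(\R)$ combined with Fatou's lemma then produces the matching $\liminf$.

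\textbf{Main obstacle.} The principal difficulty — and the key departure from the bounded-interaction setting of \cite{EJOT} — lies in the uniform control of the cross terms generated by expanding the Wick square inside the cube $A\big(\int :\!|\sqrt{\eps}\,W_N + \P_N Z(v)|^2\!:\,dx\big)^3$. After expansion, one must extract the pure drift contribution $\|\P_N Z(v)(1)\|_{L^2(\R)}^6$ while dominating the mixed and pure stochastic pieces uniformly in $N$ by a small multiple of the coercive terms plus an $o_\eps(1)$ error; this requires Wick multilinear moment estimates in the spirit of \cite{OSeoT, TW}, now carrying the extra $\eps$-scaling from the low-temperature regime. Once this is in place, together with the $N\to\infty$ stability $V_N \to V$ in $L^2(\mu_\eps)$, the two variational limits follow and yield~\eqref{L1}; the equivalent LDP formulation with rate function $J^G$ is then standard via Bryc's theorem.
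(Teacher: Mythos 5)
Your proposal follows the same conceptual route as the paper: write the Laplace functional as a difference of two variational quantities via Boué--Dupuis after rescaling $\mu_\eps = \Law(\sqrt{\eps}W(1))$ and reparametrizing the drift, test the infimum with deterministic drifts $u(t)\equiv\mathcal{L}^{1/2}\psi$ to get the upper bound, and use the coercivity of $H^G$ plus the Wick cross-term estimates to get a matching lower bound. The difference is one of formalization. The paper passes to the limit $\eps\to 0$ by relaxing the variational problem from drifts $u\in\Ha$ to laws $\nu=\Law_{\PP}(\mathbb{W},u)$ in a completed space $\cj{\mathcal{X}}$ endowed with a topology combining weak convergence with a uniform entropy bound, and then proves $\G$-convergence of $\mathcal{F}^{V+f,\eps}$ to $\mathcal{F}^{V+f,0}$ together with equicoercivity of the family (Lemmas~\ref{LEM:Ga2} and \ref{LEM:Ga3}), exactly in the Barashkov--Gubinelli style you cite for the moment estimates. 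Your ``uniform $L^2_{t,x}$ bounds on $\eps$-nearly-optimal drifts, hence weak sequential tightness, then lower semicontinuity plus Fatou'' is the informal shadow of this: the genuine technical content concealed in that phrase is that weak tightness of the joint laws of $(W, :\!|W|^2\!:, u_\eps)$ must be upgraded to almost-sure convergence of representatives via Skorokhod's representation theorem (the paper invokes \cite{Jak}), since Fatou requires pointwise convergence and the nonlinear functionals of $Z(u_\eps)$ do not commute with weak limits of drifts. You correctly identify the other key difficulty — controlling the mixed terms in the expansion of $A(\int:\!|\eps^{1/2}W+Z(u)|^2\!:dx)^3$ — and this is precisely Lemma~\ref{LEM:error} in the paper. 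So the approach is right and the hard points are correctly diagnosed, but the lower bound step would need the Skorokhod/relaxed-measure step spelled out to become a proof rather than a sketch.
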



\noi
In the following subsections, we prove Proposition \ref{PROP:GLDP} by combining Lemmas \ref{LEM:Ga0} and \ref{LEM:Ga1}.  Note that  the left-hand side of \eqref{L1} can be written as
\begin{align*}
-\eps \log \int  e^{-\frac 1\eps f(\phi)}\rho_{\eps,A}(d\phi)=-\eps \log \E_{\mu_\eps}\Big[e^{-\frac 1\eps (f(\phi)+V(\phi) ) }  \Big]+\eps \log \E_{\mu_\eps } \Big[ e^{-\frac 1\eps  V(\phi)  }    \Big],
\end{align*}

\noi
where
\begin{align}
V(\phi)=-\frac \ld4 \int_{\R} |\phi|^4 dx+A\bigg(\int_{\R} :\! |\phi|^2 \! : dx  \bigg)^3.
\label{potent1}
\end{align}

\noi 
If $\psi$ represents a Gaussian random field with $\Law(\psi)=\mu=\mu_1$ whose covariance is $\mathcal{L}^{-1}$, applying the linear transformation $\psi \mapsto \sqrt{\eps} \psi$, $\sqrt{\eps} \psi$ yields a
Gaussian random field with $\Law(\sqrt{\eps} \psi )=\mu_\eps$ whose covariance is $\eps \mathcal{L}^{-1}$. Therefore, 
\begin{align*}
-\eps \log \int  e^{-\frac 1\eps f(\phi)}\rho_{\eps,A}(d\phi)=-\eps \log \E_{\mu}\Big[e^{-\frac 1\eps (f( \sqrt{\eps} \phi)+V( \sqrt{\eps}\phi) ) }  \Big]+\eps \log \E_{\mu } \Big[ e^{-\frac 1\eps  V(\sqrt{\eps} \phi)  }    \Big].
\end{align*}

\noi 
By applying the variational representation (Lemma \ref{LEM:Gama}), we write
\begin{align*}
&-\eps \log \E_{\mu}\Big[e^{-\frac 1\eps (f( \sqrt{\eps} \phi)+V( \sqrt{\eps}\phi) ) }  \Big]\\
&=\inf_{u \in \Ha } \E\bigg[f(\eps^{\frac 12}W+ \eps^\frac 12Z(u) )+ V(\eps^{\frac 12}W+  \eps^\frac 12 Z(u) )+\frac \eps 2 \int_0^1 \| u(t) \|_{L^2(\R)}^2  dt  \bigg].
\end{align*}

\noi
By taking the change of variables $\eps^\frac 12 u \to u$, we have 
\begin{align*}
-\eps \log \int  e^{-\frac 1\eps f(\phi)}\rho_{\eps,A}(d\phi)= \inf_{u\in \Ha}\mathcal{F}^{V+f, \eps}(u)- \inf_{u\in \Ha}\mathcal{F}^{V,\eps}(u),
\end{align*}

\noi
where 
\begin{align}
\mathcal{F}^{V+f, \eps}(u)=\E_{\PP}\bigg[f(\eps^{\frac 12}W+ Z(u) )+ V(\eps^{\frac 12}W+ Z(u) )+\frac 12 \int_0^1 \| u(t) \|_{L^2(\R)}^2 dt   \bigg].
\label{L22}
\end{align}

\noi
In the following, we study the convergence problem in the low-temperature limit $\eps \to 0$
\begin{align}
\lim_{\eps \to 0} -\eps \log \int  e^{-\frac 1\eps f(\phi)}\rho_{\eps,A}(d\phi)= \lim_{\eps \to 0}  \inf_{u\in \Ha}\mathcal{F}^{V+f, \eps}(u)-\lim_{\eps \to 0} \inf_{u\in \Ha}\mathcal{F}^{V,\eps}(u).
\label{L2}
\end{align}

\noi
In taking the limit $\eps \to 0$, the main step is to pass the limit inside the infimum, removing both the Gaussian fluctuation $\eps^\frac 12 W$ and the  infinite counterterm added in the renormalization procedure.

\begin{lemma}\label{LEM:Ga0}
Let $f: \mathcal{S}'(\R) \to \R$  be a continuous and bounded functional. Then,
\begin{align*}
\lim_{\eps \to 0}  \inf_{u\in \Ha}\mathcal{F}^{V+f, \eps}(u)=\inf_{ u\in \Ha} \mathcal{F}^{V+f,0}(u),
\end{align*}

\noi
where $\mathcal{F}^{V+f, \eps}$ is defined in \eqref{L22}, and the limiting functional $\mathcal{F}^{V+f,0}$ is given by 
\begin{align}
\mathcal{F}^{V+f,0}(u):=\E_{\PP}\bigg[f( Z(u) )- \frac \ld4  \int_{\R} |Z(u)|^4 dx +A\bigg(\int_{\R}|Z(u)|^2 dx \bigg)^3 +\frac 12 \int_0^1 \| u(t) \|_{L^2(\R)}^2 dt   \bigg].
\label{L3}
\end{align}

\end{lemma}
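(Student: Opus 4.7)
The plan is to establish Lemma \ref{LEM:Ga0} by proving the two matching inequalities
$$\limsup_{\eps\to 0}\inf_{u\in\Ha} \mathcal{F}^{V+f,\eps}(u)\le \inf_{u\in\Ha} \mathcal{F}^{V+f,0}(u)\quad\text{and}\quad \liminf_{\eps\to 0}\inf_{u\in\Ha} \mathcal{F}^{V+f,\eps}(u)\ge \inf_{u\in\Ha} \mathcal{F}^{V+f,0}(u),$$
in the spirit of a $\Gamma$-convergence argument adapted to the Bou\'e-Dupuis framework. For the upper bound, fix $\dl>0$ and pick a $\dl$-near minimizer $u^\ast$ of $\mathcal{F}^{V+f,0}$; using it as a competitor reduces the task to $\lim_{\eps\to 0}\mathcal{F}^{V+f,\eps}(u^\ast)=\mathcal{F}^{V+f,0}(u^\ast)$. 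The $f$-term passes to the limit by dominated convergence, using continuity and boundedness of $f$ together with the a.s.\ convergence $\eps^{1/2}W\to 0$ in $\mathcal{S}'(\R)$. For the quartic piece I expand $\int_\R|\eps^{1/2}W+Z(u^\ast)|^4 dx$ into monomials $\eps^{k/2}\int_\R W^a\cj{W}^b Z(u^\ast)^c\cj{Z(u^\ast)}^d dx$ with $a+b+c+d=4$ and $k=a+b$; H\"older combined with $\E\|W\|_{L^p(\R)}^p<\infty$ and the embedding $\mathcal{H}^1(\R)\hookrightarrow L^p(\R)$ gives a uniformly bounded expectation for each monomial, so the prefactor $\eps^{k/2}$ kills every $k\ge 1$ cross term. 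For the Wick cube I work at the $N$-truncated level, where the variance of $\eps^{1/2}W_N$ under $\mu_\eps$ is $\eps\s_N$, and I use the algebraic identity
\begin{align*}
:\!|\eps^{1/2}W_N+\P_N Z(u^\ast)|^2\!:\,=\,\eps:\!|W_N|^2\!:\,+\,2\eps^{1/2}\Re\big(\cj{W_N}\,\P_N Z(u^\ast)\big)+|\P_N Z(u^\ast)|^2,
\end{align*}
which explicitly absorbs the divergent counterterm $\eps\s_N$ into $\eps:\!|W_N|^2\!:$. The latter vanishes as $\eps\to 0$ since $\int_\R :\!|W_N|^2\!:\,dx\to \int_\R :\!|W|^2\!:\,dx$ in $L^2(\PP)$ by \cite[Lemma 3.6]{BTT}. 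Cubing, expanding, and using uniform moment bounds leaves only the surviving contribution $\big(\int_\R|Z(u^\ast)|^2 dx\big)^3$, and sending $\dl\to 0$ closes the upper bound.

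For the lower bound, take an $\eps$-near minimizer $u^\eps\in\Ha$ of $\mathcal{F}^{V+f,\eps}$. Comparing against the trivial competitor $u\equiv 0$, whose energy stays bounded as $\eps\to 0$ by the same Wick cancellation just described, and invoking the Gagliardo-Nirenberg-Sobolev absorption \eqref{GNS}--\eqref{coer0} to dominate the focusing quartic by small fractions of $\|Z(u^\eps)\|_{\mathcal{H}^1(\R)}^2\le \int_0^1\|u^\eps(t)\|_{L^2(\R)}^2 dt$ and of the $A(\cdot)^3$ taming term (as in \cite[Section 4]{OSeoT}), yields
$$\sup_{\eps>0}\E\int_0^1\|u^\eps(t)\|_{L^2(\R)}^2 dt<\infty$$
together with uniform control on the taming energy. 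Banach-Alaoglu then extracts a subsequence $u^{\eps_k}\rightharpoonup u^0$ weakly in $L^2(\O\times[0,1]\times\R)$ (after a Skorohod representation if needed), so that $Z(u^{\eps_k})\rightharpoonup Z(u^0)$ weakly in $\mathcal{H}^1(\R)$. The crucial structural point is that the discreteness of the spectrum $\ld_n=\sqrt{1+2n}\to\infty$ of $\mathcal{L}$ makes the embedding $\mathcal{H}^1(\R)\hookrightarrow L^p(\R)$ compact for every finite $p$, upgrading this weak convergence to strong $L^p$-convergence of $Z(u^{\eps_k})$ almost surely. Hence the focusing $L^4$-term passes to the limit; the drift energy and the convex Wick-cube taming term (after the cancellation above) are weakly lower-semicontinuous; and $f$ transfers by continuity and boundedness. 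Combining these gives $\liminf_\eps \mathcal{F}^{V+f,\eps}(u^\eps)\ge \mathcal{F}^{V+f,0}(u^0)\ge \inf_{u\in\Ha} \mathcal{F}^{V+f,0}(u)$, as required.

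The principal obstacle is the lower bound: because the focusing quartic $-\frac{\ld}{4}\int_\R|\phi|^4 dx$ is not weakly lower-semicontinuous, the argument critically relies on the compact embedding $\mathcal{H}^1(\R)\hookrightarrow L^p(\R)$ provided by the confining potential $|x|^2$---an input that would disappear in the translation-invariant setting, consistent with Rider's triviality \cite{Rider}. A secondary technical point is the algebraic bookkeeping of the Wick counterterm inside the cube, which I handle at the finite-$N$ level and pass to $N=\infty$ by exploiting the $L^2(\PP)$ Cauchy property of the renormalized $L^2$-mass.
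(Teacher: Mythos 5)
Your overall strategy is the same as the paper's: a $\Gamma$-convergence argument, carried out by hand rather than through the abstract machinery of Definition~\ref{DEF:1}, Proposition~\ref{PROP:Ga}, and Lemmas~\ref{LEM:Ga2}--\ref{LEM:Ga3}. Your upper-bound (recovery sequence) argument is sound and matches the paper's: take a near-minimizer $u^\ast$ of $\mathcal{F}^{V+f,0}$, hold it fixed, and let the $\eps^{1/2}W$ perturbation vanish, controlling cross-terms by moment bounds and the Wick counterterm by an exact algebraic identity at finite $N$ --- this is essentially what the paper packages in Lemma~\ref{LEM:error} together with the constant recovery sequence $\nu_\eps=\nu$.

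There is, however, a genuine gap in your lower bound. You write that Banach--Alaoglu extracts $u^{\eps_k}\rightharpoonup u^0$ weakly in $L^2(\Omega\times[0,1]\times\R)$ ``(after a Skorohod representation if needed),'' and then appeal to the compact embedding $\mathcal{H}^1(\R)\hookrightarrow L^p(\R)$ to get a.s.\ strong $L^p$ convergence of $Z(u^{\eps_k})$. These two modes of convergence cannot be interchanged so freely. If you really take weak limits in $L^2(\Omega\times[0,1]\times\R)$ on the \emph{original} probability space, then $Z(u^{\eps_k})\rightharpoonup Z(u^0)$ only in $L^2(\Omega;\mathcal{H}^1(\R))$, and this does \emph{not} imply $\omega$-a.s.\ weak convergence in $\mathcal{H}^1(\R)$ (consider $u^{\eps_k}$ oscillating in $\omega$); so the compact embedding cannot be applied $\omega$-by-$\omega$, and the focusing quartic $-\frac{\ld}{4}\int_\R|Z(u^{\eps_k})|^4\,dx$, which is not weakly lower-semicontinuous in $L^2(\Omega;\mathcal{H}^1)$, will not pass to the limit. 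If, alternatively, you apply Jakubowski's Skorokhod representation to the joint law of $(\mathbb{W},u^{\eps_k})$ to get a.s.\ convergence on a new probability space $(\Omega',\mathcal{G},\mathbb{Q})$, then you do recover a.s.\ weak $\mathcal{H}^1$ convergence and the compact embedding works --- but now the limit process $r$ is defined on $\Omega'$ and need not be adapted to the filtration generated by the representative of $W$; it is no longer clear that the limiting cost is $\geq\inf_{u\in\Ha}\mathcal{F}^{V+f,0}(u)$. This is precisely what the paper's relaxation from $\Ha$ to the space of laws $\overline{\mathcal{X}}$, combined with the identity $\inf_{\overline{\mathcal{X}}}=\inf_{\mathcal{X}}=\inf_{\Ha}$ from \cite[Lemma 8]{BG1}, is designed to repair. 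Without invoking that relaxation (or proving the analogous equality of infima), the lower bound does not close. You should either adopt the paper's $\overline{\mathcal{X}}$-relaxation and Skorokhod argument (Lemma~\ref{LEM:Ga2}) or explicitly cite and justify the reduction $\inf_{\overline{\mathcal{X}}}=\inf_{\Ha}$ after passing to the Skorokhod space.

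A minor further point: you should also confirm, before invoking Fatou, that the error term $\mathcal{E}(\mathbb{W},Z(u^{\eps_k}),\eps_k)$ vanishes uniformly along the minimizing sequence; in the paper this is furnished by the bound $|\mathcal{E}|\lesssim\eps^{1/2}C+\eps^{1/2}\mathcal{C}(u)$ of Lemma~\ref{LEM:error} together with the a priori coercivity bound. Your appeal to \eqref{GNS}--\eqref{coer0} gives the needed coercivity, so this is more a matter of making the dependence explicit than a logical gap.
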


We postpone the proof of Lemma \ref{LEM:Ga0} to the next subsection. Taking Lemma \ref{LEM:Ga0} for granted for now and combining it with the following lemma, we present the proof of Proposition \ref{PROP:GLDP} at the end of this subsection.

\begin{lemma}\label{LEM:Ga1}
Let $f: \mathcal{S}'(\R) \to \R$  be a continuous and bounded functional. Then,
\begin{align*}
\inf_{u\in \Ha} \mathcal{F}^{V+f,0}(u)=\inf_{\phi \in S'(\R)}\big\{f(\phi)+J^G(\phi) \big\},
\end{align*}

\noi
where  $\mathcal{F}^{V+f,0}$ is defined in \eqref{L3}, and $J^G$ is the rate function given in \eqref{rateG}.
\end{lemma}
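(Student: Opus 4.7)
The plan is to prove matching upper and lower bounds by exploiting the identity $\mathcal{L}^{1/2} Z(u)(1) = \int_0^1 u(\tau)\, d\tau$, which provides the bridge between drifts $u \in \Ha$ and the Cameron--Martin space $\mathcal{H}^1(\R)$. Since the limiting functional $\mathcal{F}^{V+f,0}$ in \eqref{L3} contains no surviving Gaussian fluctuation $\eps^{1/2}W$, the randomness enters only through the drift $u$, so the problem becomes essentially deterministic once $u$ is fixed.

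For the upper bound $\inf_{u\in \Ha}\mathcal{F}^{V+f,0}(u) \le \inf_{\phi\in \mathcal{S}'(\R)}\{f(\phi)+J^G(\phi)\}$, since $J^G \equiv \infty$ off $\mathcal{H}^1(\R)$ it suffices to consider $\phi \in \mathcal{H}^1(\R)$. For such $\phi$ I would use the deterministic time-constant drift $u(\tau) \equiv \mathcal{L}^{1/2}\phi$, which belongs to $\Ha$ because $\|\mathcal{L}^{1/2}\phi\|_{L^2(\R)}^2 = \|\phi\|_{\mathcal{H}^1(\R)}^2 < \infty$. By \eqref{defZu} we have $Z(u)(1) = \int_0^1 \mathcal{L}^{-1/2}\mathcal{L}^{1/2}\phi\, d\tau = \phi$ pointwise in $\omega$, while a direct computation gives
\begin{align*}
\frac{1}{2}\int_0^1 \|u(\tau)\|_{L^2(\R)}^2\, d\tau = \frac{1}{2}\|\phi\|_{\mathcal{H}^1(\R)}^2 = \frac{1}{2}\int_{\R}|\dx\phi|^2\, dx + \frac{1}{2}\int_{\R}|x|^2|\phi|^2\, dx,
\end{align*}
reproducing exactly the kinetic and potential terms of $J^G$. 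Substituting into \eqref{L3} yields $\mathcal{F}^{V+f,0}(u) = f(\phi) + J^G(\phi)$, and taking the infimum over $\phi \in \mathcal{H}^1(\R)$ gives the desired inequality.

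For the lower bound, the key estimate is the pointwise-in-$\omega$ Cauchy--Schwarz inequality
\begin{align*}
\|Z(u)(1)\|_{\mathcal{H}^1(\R)}^2 = \Big\|\int_0^1 u(\tau)\, d\tau\Big\|_{L^2(\R)}^2 \le \int_0^1 \|u(\tau)\|_{L^2(\R)}^2\, d\tau,
\end{align*}
obtained again from $\mathcal{L}^{1/2}Z(u)(1) = \int_0^1 u(\tau)\, d\tau$. In particular $Z(u)(1) \in \mathcal{H}^1(\R)$ almost surely whenever $u \in \Ha$. Using this inequality to dominate the entropic term in \eqref{L3} by the smaller quantity $\frac{1}{2}\E_{\PP}\|Z(u)(1)\|_{\mathcal{H}^1(\R)}^2$ and regrouping the remaining $\int|Z(u)|^4$ and $(\int|Z(u)|^2)^3$ terms, I obtain $\mathcal{F}^{V+f,0}(u) \ge \E_{\PP}\big[f(Z(u)(1))+J^G(Z(u)(1))\big]$, which is bounded below pointwise by the deterministic constant $\inf_{\phi}\{f(\phi)+J^G(\phi)\}$, finishing the argument.

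I expect no substantive obstacle: the lemma reduces to the explicit formula for $Z(u)$ combined with Cauchy--Schwarz, and the two bounds meet because the constant-drift construction in the upper bound saturates the Cauchy--Schwarz inequality used in the lower bound. The only minor care required is to note that the right-hand variational problem may be restricted to $\mathcal{H}^1(\R)$ (automatic from $J^G \equiv \infty$ elsewhere) and that the integrand in the lower-bound estimate is measurable and finite almost surely, which follows from the a.s.\ $\mathcal{H}^1$-regularity of $Z(u)(1)$.
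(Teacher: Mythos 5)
Your proof is correct and follows essentially the same route as the paper's: the upper bound via the deterministic constant drift $u(t)\equiv\mathcal{L}^{1/2}\phi$, and the lower bound via the Cauchy--Schwarz estimate $\|Z(u)(1)\|_{\mathcal{H}^1}^2 \le \int_0^1\|u(\tau)\|_{L^2}^2\,d\tau$ together with the pointwise inequality $f(Z(u))+J^G(Z(u))\ge\inf_\phi\{f(\phi)+J^G(\phi)\}$. Your observation that equality of the infima over $\mathcal{H}^1$ and $\mathcal{S}'$ is automatic from $J^G\equiv\infty$ off $\mathcal{H}^1$ is a cleaner justification than the paper's appeal to density, but otherwise the two arguments coincide.
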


\begin{proof}
We first prove the upper bound 
\begin{align*}
\inf_{u\in \Ha} \mathcal{F}^{V+f,0}(u)\le \inf_{\phi \in S'(\R)}\big\{f(\phi)+J^G(\phi) \big\}.
\end{align*}

\noi
Taking the infimum over processes of the form
\begin{align}
u(t)=\mathcal{L}^\frac 12\phi
\label{L4}
\end{align}

\noi
for every $ 0 \le t \le 1$, where $\phi \in \mathcal{H}^1(\R)$ is a deterministic function, we have 
\begin{align}
Z(u)=\int_0^1 \mathcal{L}^{-\frac 12} u(t) dt=\phi. 
\label{Zinf0}
\end{align}

\noi
With the drift chosen as in \eqref{L4}, the entropy term can be written as
\begin{align}
\frac 12 \int_{0}^1  \| u(t) \|_{L^2(\R)}^2 dt=\frac 12 \int_0^1 \| \mathcal{L}^\frac 12 \phi \|_{L^2(\R)}^2 dt=\| \phi \|_{\mathcal{H}^1(\R)}^2.
\label{entc0}
\end{align} 

\noi
Therefore, combining \eqref{L3}, \eqref{L4}, \eqref{Zinf0}, and \eqref{entc0}, we obtain
\begin{align*}
\inf_{u\in \Ha} \mathcal{F}^{V+f, 0}(u)&\le \inf_{\substack{u(t)\equiv\mathcal{L}^\frac 12\phi \\ \phi \in \mathcal{H}^1  }  } \mathcal{F}^{V+f, 0}(u)\\
&= \inf_{\substack{ u(t)\equiv\mathcal{L}^\frac 12\phi \\ \phi \in \mathcal{H}^1  }   }\bigg\{ f(\phi)-\frac \ld4 \int_{\R} |\phi|^4 dx+A\bigg( \int_{\R} |\phi|^2 dx \bigg)^3  +\frac 12 \int_{\R} |\mathcal{L}^\frac 12 \phi|^2dx \bigg\}\\
&=\inf_{\phi \in \mathcal{H}^1}\big\{f(\phi)+J^G(\phi) \big\}\\
&=\inf_{\phi \in \mathcal{S}'}\big\{f(\phi)+J^G(\phi) \big\},
\end{align*}

\noi
where the last equality follows from the density of $\mathcal{H}^1$ in $\mathcal{S}'$.

Next, we prove the lower bound
\begin{align*}
\inf_{u\in \Ha} \mathcal{F}^{V+f,0}(u)\ge \inf_{\phi \in S'(\R)}\big\{f(\phi)+J^G(\phi) \big\}.
\end{align*}

\noi
By applying \eqref{defZu}, Minkowski’s inequality, and the Cauchy–Schwarz inequality,
\begin{align}
\| Z(u)\|_{\mathcal{H}^1(\R)}^2 \le \int_0^1 \|  u(t)\|_{L^2(\R)}^2 dt.
\label{LL3}
\end{align}

\noi
It follows from \eqref{L3} and \eqref{LL3} that 
\begin{align*}
&\inf_{u\in \Ha} \mathcal{F}^{V+f,0}(u)\\
&\ge  \inf_{u\in \Ha} \E\bigg[ f(Z(u))-\frac \ld4\int_{\R}|Z(u)|^4 dx+A\bigg( \int_{\R} |Z(u)|^2 dx \bigg)^3+\frac 12 \| Z(u) \|^2_{\mathcal{H}^1(\R)}     \bigg].
\end{align*}

\noi
Note that for every $u \in \Ha$, we have  
\begin{align*}
&f(Z(u))-\frac \ld4\int_{\R}|Z(u)|^4 dx+A\bigg( \int_{\R} |Z(u)|^2 dx \bigg)^3+\frac 12 \| Z(u) \|^2_{\mathcal{H}^1(\R)}    \\
&=f(Z(u))+J^G(Z(u))\\
&\ge \inf_{\phi \in \mathcal{S}'}\big\{f(\phi)+J^G(\phi) \big\}, 
\end{align*}

\noi
which implies that 
\begin{align*}
\inf_{u\in \Ha} \mathcal{F}^{V+f,0}(u) \ge \inf_{\phi \in \mathcal{S}'}\big\{f(\phi)+J^G(\phi) \big\}.
\end{align*}

\noi
This completes the proof of Lemma \ref{LEM:Ga1}.

\end{proof}

We are now ready to prove Proposition \ref{PROP:GLDP}.

\begin{proof}[Proof of Proposition \ref{PROP:GLDP}]
It follows from \eqref{L2} and Lemmas \ref{LEM:Ga0}, \ref{LEM:Ga1} that
\begin{align*}
\lim_{\eps \to 0} -\eps \log \int  e^{-\frac 1\eps f(\phi)}\rho_{\eps,A}(d\phi)&= \lim_{\eps \to 0}  \inf_{u\in \Ha}\mathcal{F}^{V+f, \eps}(u)-\lim_{\eps \to 0} \inf_{u\in \Ha}\mathcal{F}^{V,\eps}(u)\\
&=\inf_{\phi \in S'(\R)}\big\{f(\phi)+J^G(\phi) \big\}-\inf_{\phi \in S'(\R)} J^G(\phi)\\
&=\inf_{\phi \in S'(\R)}\big\{f(\phi)+J^G(\phi) \big\}, 
\end{align*}

\noi
where in the last line we used 
\begin{align*}
\inf_{\phi \in S'(\R)} J^G(\phi)=\inf_{\phi \in S'(\R)} H^G(\phi)=0
\end{align*}

\noi
since $H^G$ has a unique minimizer $\phi=0$, as stated in Lemma \ref{LEM:coercive}.
This completes the proof of Proposition \ref{PROP:GLDP}.

\end{proof}

\subsection{Gamma convergence}
In this subsection, we present the proof of Lemma \ref{LEM:Ga0} at the end of this subsection, using the Gamma convergence approach. We first recall the definition of Gamma convergence.
\begin{definition}\label{DEF:1}
Let $X$ be a first-countable topological space, and let $F_n:X\to \cj \R$ be a sequence of functionals on $X$. Then, $F_n$ is said to  $\G$-converge to the $\G$-limit $F:X \to \cj \R$ if the following two conditions hold:
\begin{itemize}
\item[(i)] Let $x \in X$. For every sequence $\{x_n\}_{n\ge 1}\subset X$ with $x_n \to x$ as $n \to \infty$,
\begin{align*}
F(x)\le \liminf_{n\to \infty} F_n(x_n).
\end{align*}

\smallskip 

\item[(ii)] For every $x \in X$, there exists a sequence $\{x_n\}_{n \ge 1} \subset X$ converging to $x$, called a recovery sequence, such that
\begin{align*}
\limsup_{n\to \infty} F_n(x_n) \le F(x).
\end{align*}

\end{itemize}

\end{definition}

\begin{definition}
Let $F_n$ be a sequence of functionals as in Definition \ref{DEF:1}. 
The sequence $F_n:X \to \cj \R$ is said to be equicoercive if there exists a compact set  $\mathcal{K} \subset X$ such that for all $n\in \N$,
\begin{align*}
\inf_{x \in \mathcal{K} }F_n(x)=\inf_{x\in X} F_n(x).
\end{align*}

\end{definition}

Combining Gamma-convergence with equicoercivity, we obtain the convergence of the infimum.

\begin{proposition}\label{PROP:Ga}
Suppose that $F_n$ $\G$-converges to the $\G$-limit $F$, and the sequence $\{F_n\}_{n\in \ge 1}$ is equicoercive. Then, $F$ attains its minimum, and 
\begin{align*}
\min_{x \in X} F(x)=\lim_{n\to \infty} \inf_{x\in X} F_n(x).
\end{align*} 
\end{proposition}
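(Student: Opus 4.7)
The plan is to split the conclusion into two one-sided bounds: the $\limsup$ bound $\limsup_n \inf_X F_n \le \inf_X F$ (which needs only the $\Gamma$-$\limsup$/recovery-sequence property) and the $\liminf$ bound $\inf_X F \le \liminf_n \inf_X F_n$ (which uses equicoercivity together with the $\Gamma$-$\liminf$ property). Combining them will pinch the limit of $\inf_X F_n$ to the value $\inf_X F$, and the same argument will produce a point at which $F$ attains this value.

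For the upper bound, I fix an arbitrary $y \in X$ and apply condition (ii) of Definition \ref{DEF:1} to obtain a recovery sequence $\{y_n\} \subset X$ with $y_n \to y$ and $\limsup_{n \to \infty} F_n(y_n) \le F(y)$. Since $\inf_{x\in X} F_n(x) \le F_n(y_n)$, passing to the $\limsup$ yields $\limsup_{n\to\infty}\inf_{x \in X} F_n(x) \le F(y)$, and taking the infimum over $y \in X$ gives
\begin{equation*}
\limsup_{n\to\infty} \inf_{x\in X} F_n(x) \le \inf_{x\in X} F(x).
\end{equation*}

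For the lower bound, I use equicoercivity: since $\inf_{\mathcal{K}} F_n = \inf_X F_n$ for all $n$, I may select almost-minimizers $x_n \in \mathcal{K}$ satisfying $F_n(x_n) \le \inf_X F_n + 1/n$. By compactness of $\mathcal{K}$ in the first-countable space $X$, there is a subsequence $x_{n_k} \to x^\ast \in \mathcal{K}$. Applying condition (i) of Definition \ref{DEF:1} to this convergent sequence,
\begin{equation*}
F(x^\ast) \le \liminf_{k\to\infty} F_{n_k}(x_{n_k}) \le \liminf_{k\to\infty}\bigl(\inf_{x\in X} F_{n_k}(x) + \tfrac{1}{n_k}\bigr) = \liminf_{k\to\infty} \inf_{x\in X} F_{n_k}(x).
\end{equation*}
In particular, $\inf_X F \le F(x^\ast) \le \liminf_n \inf_X F_n$. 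The same inequality applied to any subsequence of $\{F_n\}$ shows that in fact $\inf_X F \le \liminf_n \inf_X F_n$.

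Concatenating the two bounds gives $\inf_X F \le \liminf_n \inf_X F_n \le \limsup_n \inf_X F_n \le \inf_X F$, so the limit exists and equals $\inf_X F$. The intermediate chain $\inf_X F \le F(x^\ast) \le \inf_X F$ then forces $F(x^\ast) = \inf_X F$, so $x^\ast$ is a minimizer and $F$ attains its minimum. There is no serious obstacle here; the only delicate point is the sequential extraction from $\mathcal{K}$, which is justified by the first-countability of $X$ assumed in Definition \ref{DEF:1} (ensuring that compactness in $X$ supplies the convergent subsequence needed to apply the $\Gamma$-$\liminf$ inequality).
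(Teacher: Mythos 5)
Your proof is correct and is precisely the standard argument for the fundamental theorem of $\G$-convergence; the paper itself does not prove Proposition~\ref{PROP:Ga} but refers to \cite{Maso}, and what you have written is essentially the proof found there. The split into a recovery-sequence upper bound and an equicoercivity/$\G$-$\liminf$ lower bound is the canonical one, and your appeal to first countability is indeed the right justification for extracting a convergent subsequence from the compact set $\mathcal K$ (compact together with first countable implies sequentially compact).

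One step is worth making airtight. Condition~(i) of Definition~\ref{DEF:1} is stated for sequences indexed by all of $\N$ and tested against the full family $(F_n)_n$, whereas you invoke it for a convergent \emph{subsequence} $x_{n_k}\to x^\ast$ tested against $(F_{n_k})_k$. The routine repair is to complete $(x_{n_k})_k$ to a full sequence, e.g.\ $\tilde x_n := x_{n_k}$ for $n_{k-1}<n\le n_k$ (with $n_0=0$); then $\tilde x_n\to x^\ast$ and, since $(F_{n_k}(x_{n_k}))_k$ is a subsequence of $(F_n(\tilde x_n))_n$, one has $\liminf_{n}F_n(\tilde x_n)\le \liminf_{k}F_{n_k}(x_{n_k})$, so condition~(i) applied to $(\tilde x_n)_n$ yields exactly the subsequential $\G$-$\liminf$ inequality you used. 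The same remark applies to your closing sentence: to get the inequality against the true $\liminf_n\inf_X F_n$ rather than a possibly larger subsequential limit, first pass to a subsequence $(m_j)$ along which $\inf_X F_{m_j}\to\liminf_n\inf_X F_n$, then choose almost-minimizers in $\mathcal K$ along that subsequence and extract the convergent sub-subsequence there. With these cosmetic adjustments the argument is complete.
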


\noi
For the proof of Proposition \ref{PROP:Ga}, see \cite{Maso}.

In the following, we prove the limit stated in Lemma \ref{LEM:Ga0}:
\begin{align*}
\lim_{\eps \to 0}  \inf_{u\in \Ha}\mathcal{F}^{V+f, \eps}(u)=\inf_{ u\in \Ha} \mathcal{F}^{V+f,0}(u)
\end{align*}

\noi
by showing that $\mathcal{F}^{V+f, \eps}$ $\G$-converges to the limiting functional $\mathcal{F}^{V+f, 0}$ and the family  $\{ \mathcal{F}^{V+f, \eps} \}_{\eps>0} $ is equicoercive.

To make use of Gamma convergence, we need to adjust the variational setting to ensure the necessary compactness. Instead of minimizing over the drift $u \in \Ha$, we relax the variational problem by minimizing over the  law of the pair $(\mathbb{W}, u)$, where the enhanced data set $\mathbb{W}=(W,\, :\!|W|^2 \!:)$ is fixed and $u$ varies over $\Ha$. This approach is based on \cite{BG}.

\begin{definition}
We define 
\begin{align*}
\mathcal{X}:=\bigg\{ \nu: \nu=\Law_{\mathbb{P} }(\mathbb{W}, u ) \; \text{for} \; u \in \Ha  \; \text{and} \; \int_0^1 \| u(t) \|_{L^2(\R)}^2 dt <\infty  \bigg\},
\end{align*}

\noi
where $\mathbb{W}=(W, \, :\! |W|^2 \!: )$ is fixed and $u$ varies over $\Ha$. Note that all elements of $\mathcal{X}$ have first marginal given by $\Law(\mathbb{W})$.
\end{definition}

To ensure compactness, we need to complete the space $\mathcal{X}$ of measures with respect to a suitable topology.
\begin{definition}
We define 
\begin{align*}
\cj{\mathcal{X}}:=\bigg\{& \nu: \, \text{there exist a sequence} \, \{\nu_n \}_{n \ge 1}\subset \mathcal{X} \; \text{such that} \; \text{$\nu_n \to \nu$ weakly}  \\
&\hphantom{XX} \text{and} \quad \sup_{n\ge 1} \E_{\nu_n}\bigg[ \int_0^1 \| u(t) \|_{L^2(\R)}^2 dt   \bigg]<\infty \bigg\}.
\end{align*}

\noi
Specifically, we endow $\cj {\mathcal{X}}$ with the following topology: a seuqnce $\{\nu_n\}_{n \ge 1}$ in $\cj {\mathcal{X}}$ is said to converge to $\nu$ if 
\begin{itemize}
\item[(1)] $\nu_n $ converges weakly to $\nu$
\smallskip
\item[(2)] $\sup_{n\ge 1} \E_{\nu_n}\bigg[ \int_0^1 \| u(t) \|_{L^2(\R)}^2 dt   \bigg]<\infty$.
\end{itemize}

\noi
As in the case of $\mathcal{X}$, all elements of $\cj {\mathcal{X}}$  have their first marginal given by $\Law(\mathbb{W})$.

\end{definition}

With a slight abuse of notation, we define for any $\nu \in \mathcal{X}$,
\begin{align}
\mathcal{F}^{V+f, \eps }(\nu)&=\E_{\nu}\bigg[f(\eps^{\frac 12}W+ Z(u) )+ V(\eps^{\frac 12}W+ Z(u) )+\frac 12 \int_0^1 \| u(t) \|_{L^2(\R)}^2 dt   \bigg] \label{L222}\\
\mathcal{F}^{V+f,0 }(\nu)&=\E_{\nu}\bigg[f( Z(u) )+ V( Z(u) )+\frac 12 \int_0^1 \| u(t) \|_{L^2(\R)}^2 dt   \bigg] \label{L33}
\end{align}

\noi 
By definition of the measure $\nu=\Law_{\PP}(\mathbb{W},u) \in \mathcal{X}$, where $\mathbb{W}$ is fixed and $u$ varies over $\Ha$, we have  
\begin{align}
\mathcal{F}^{V+f, \eps }(\nu)&= \mathcal{F}^{V+f, \eps }(u) \label{Gaa0} \\
\mathcal{F}^{V+f, 0 }(\nu)&= \mathcal{F}^{V+f, 0 }(u).
\label{Gaa1}
\end{align}

\noi 
It follows from \cite[Lemma 8]{BG1}  that
\begin{align*}
\inf_{ \nu \in \cj  { \mathcal{X} } } \mathcal{F}^{V+f, \eps }(\nu)&=\inf_{ \nu \in  \mathcal{X}  } \mathcal{F}^{V+f, \eps }(\nu)=\inf_{u \in \Ha}\mathcal{F}^{V+f, \eps }(u)  \\
\inf_{ \nu \in \cj  { \mathcal{X} } } \mathcal{F}^{V+f,0}(\nu)&=\inf_{ \nu \in  \mathcal{X}  } \mathcal{F}^{V+f,0}(\nu)=\inf_{u \in \Ha}\mathcal{F}^{V+f, 0 }(u),
\end{align*}

\noi
In other words, the variational problems over $\mathcal{X}$ and $\cj {\mathcal{X}}$ are equivalent.  In particular,  the second equality in each line follows from \eqref{Gaa0} and \eqref{Gaa1}, respectively.  This shows that the relaxed variational problem can be studied over $\cj {\mathcal{X}}$, rather than minimizing over the drift $u\in \Ha$.


In the following, we prove the Gamma convergence of the functional $\mathcal{F}^{V+f, \eps}$ to its Gamma limit $\mathcal{F}^{V+f,0}$ on $\cj {\mathcal{X}}$.
\begin{lemma}\label{LEM:Ga2}  
Let $f: \mathcal{S}'(\R) \to \R$  be a continuous and bounded functional. Then, $\mathcal{F}^{V+f, \eps}$ $\G$-converges to the limiting functional $\mathcal{F}^{V+f, 0}$ on $\cj {\mathcal{X}}$, where   $\mathcal{F}^{V+f,\eps}$ and $\mathcal{F}^{V+f,0}$ are defined in \eqref{L222} and \eqref{L33}, respectively.

\end{lemma}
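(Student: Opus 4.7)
The strategy is to verify both halves of Definition~\ref{DEF:1} for $\mathcal{F}^{V+f,\eps}$ on $\cj{\mathcal{X}}$. The key algebraic input is the exact expansion
\begin{equation*}
\int_{\R} : \! |\eps^{1/2}W + Z(u)|^2 \! : dx = \eps \int_\R : \! |W|^2 \! : dx + 2\eps^{1/2}\Re\langle W, Z(u)\rangle_{L^2(\R)} + \|Z(u)\|_{L^2(\R)}^2,
\end{equation*}
which holds because Wick ordering subtracts only the Gaussian covariance $\eps \sigma_\infty$ of $\eps^{1/2}W$ and leaves the deterministic $Z(u)$-contribution untouched. As $\eps \to 0$, the divergent counterterm cancels at the functional level and the two fluctuation terms vanish almost surely, so that $\int : \! |\eps^{1/2}W + Z(u)|^2 \! : dx \to \|Z(u)\|_{L^2(\R)}^2$. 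The analogous pointwise convergence holds for $\|\eps^{1/2}W + Z(u)\|_{L^4(\R)}^4 \to \|Z(u)\|_{L^4(\R)}^4$, and by continuity of $f$ on $\mathcal{S}'(\R)$ also for $f(\eps^{1/2}W + Z(u)) \to f(Z(u))$.

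\textbf{Recovery sequence.} For $\nu \in \cj{\mathcal{X}}$ I would take the constant sequence $\nu_\eps = \nu$. On the full-$\nu$-measure set $\{\int_0^1 \|u\|_{L^2(\R)}^2 dt < \infty\}$ one has $Z(u) \in \mathcal{H}^1(\R) \subset L^4(\R)$ by \eqref{LL3} and \eqref{GNS}, so the three pointwise convergences above hold a.s. To pass to expectations I would invoke dominated convergence: Gaussian hypercontractivity controls the moments of $\|W\|_{L^4(\R)}$ and of $\int : \! |W|^2 \! : dx$ to all orders, while \eqref{GNS} gives $\|Z(u)\|_{L^4(\R)}^4 \les \|Z(u)\|_{\mathcal{H}^1(\R)} \|Z(u)\|_{L^2(\R)}^3$ which, combined with the coercivity \eqref{coer0}, furnishes an $\eps$-uniform integrable envelope; boundedness of $f$ handles that term trivially.

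\textbf{$\liminf$ inequality.} Given $\nu_\eps \to \nu$ in $\cj{\mathcal{X}}$, I would apply Skorokhod's representation on a suitable Polish realization — crucially exploiting that the first marginal is the fixed law of $\mathbb{W}$ — to obtain a coupling with $\mathbb{W}$ held constant and $u_\eps \to u$ weakly in $L^2([0,1];L^2(\R))$ almost surely. The entropy $u \mapsto \tfrac12 \int \|u\|_{L^2(\R)}^2 dt$ is weakly lower semicontinuous by convexity. Weak convergence $u_\eps \rightharpoonup u$ yields $Z(u_\eps) \rightharpoonup Z(u)$ in $\mathcal{H}^1(\R)$ via \eqref{LL3}, and the compactness of $\mathcal{L}^{-1}$ on $L^2(\R)$ together with \eqref{GNS} upgrades this to strong $L^4(\R)$-convergence on the event $\{\sup_\eps \int_0^1\|u_\eps\|_{L^2}^2 dt < \infty\}$, which has full $\mathbb{P}$-measure by the uniform entropy bound built into the topology of $\cj{\mathcal{X}}$. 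The Wick expansion, the continuity of $f$, and Fatou's lemma applied to $V(\eps^{1/2}W + Z(u_\eps)) + \tfrac12 \int \|u_\eps\|_{L^2}^2 dt$ — bounded uniformly below via the coercivity \eqref{coer0} for $A \ge A_0$ — then deliver the required inequality.

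\textbf{Main obstacle.} The principal difficulty is that the negative quartic $-\tfrac{\lambda}{4}\|\cdot\|_{L^4(\R)}^4$ is not lower semicontinuous under weak convergence in isolation, and the Wick counterterm is divergent at the measure level. Resolving both issues simultaneously requires combining \emph{(i)} the Wick expansion, which eliminates the divergence at the functional level by isolating a vanishing Gaussian piece, with \emph{(ii)} the coercivity \eqref{coer0} enabled by the condition $A \ge A_0$ of Lemma~\ref{LEM:coercive}, which absorbs the negative quartic into the positive sextic $L^2$-mass and the $\mathcal{H}^1$-norm, supplying the uniform lower bound needed to apply Fatou's lemma and justify dominated convergence in the recovery step.
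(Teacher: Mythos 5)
Your overall strategy coincides with the paper's: a constant recovery sequence $\nu_\eps = \nu$ for the limsup half, and for the liminf half a Skorokhod coupling that exploits the fixed first marginal, combined with the coercivity of Lemma~\ref{LEM:coercive} and a Fatou/lower-semicontinuity argument. The main structural difference is that the paper factors the vanishing Gaussian fluctuations into the explicit error term $\mathcal{E}(\mathbb{W},Z(u),\eps)$ of Lemma~\ref{LEM:Gamma1} and bounds it by $O(\eps^{1/2})$ in Lemma~\ref{LEM:error}; this is what makes the recovery step routine and what absorbs the cross terms (for instance $\eps^{1/2}\Re\int_{\R}(W-W_N)\,\overline{Z(u)}\,dx$ inside the cubed Wick mass). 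Your appeal to dominated convergence with an "$\eps$-uniform envelope" from Gaussian moment bounds and \eqref{GNS} aims at the same estimates but is considerably sketchier than the paper's quantitative error lemma, and in particular the cross terms are the part that actually requires the Young/interpolation manipulations that Lemma~\ref{LEM:error} packages.

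There is one concrete error in your liminf step. You assert that the event $\{\sup_\eps \int_0^1 \|u_\eps\|_{L^2(\R)}^2\,dt < \infty\}$ has full measure "by the uniform entropy bound built into the topology of $\cj{\mathcal{X}}$". That bound is $\sup_\eps \E_{\nu_\eps}\big[\int_0^1\|u\|_{L^2(\R)}^2\,dt\big]<\infty$, a bound in expectation, which does not imply an almost sure uniform bound on the random variables themselves. Your use of the compact embedding $\mathcal{H}^1(\R)\hookrightarrow L^p(\R)$ to upgrade weak $\mathcal{H}^1$-convergence of $Z(u_\eps)$ to strong $L^4$-convergence is the right idea and is actually a useful elaboration of what the paper's bare invocation of Fatou leaves implicit, but it must be run along subsequences where the (pointwise nonnegative, by \eqref{Ga0} and \eqref{coer0}) integrand stays bounded: at a sample point where $\liminf_\eps$ of the integrand is infinite the desired pointwise inequality is trivial, while on a bounded subsequence the bound on the integrand, together with \eqref{GNS}, controls $\int_0^1\|u_{\eps_k}\|_{L^2}^2\,dt$ and hence $\|Z(u_{\eps_k})\|_{\mathcal{H}^1}$, which is what the compactness argument actually needs. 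With that repair your proof goes through and is essentially the paper's.
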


\begin{proof}
We first prove the liminf inequality. Let $\nu \in \cj{\mathcal{X}}$. Then, for any sequence $\{ \nu_\eps \}_{\eps>0} \subset \cj{\mathcal{X}}$ converging to $\nu$ in $\cj{\mathcal{X}}$, we show that 
\begin{align}
\mathcal{F}^{V+f,0}(\nu) \le \liminf_{\eps \to 0}  \mathcal{F}^{V+f,\eps} (\nu_\eps ). 
\label{Ga000}
\end{align}

\noi
We may assume that 
\begin{align}
\mathcal{U}(u):=\sup_{\eps >0} \E_{ \nu_\eps } \bigg[A\bigg( \int_{\R} |Z(u)|^2 dx \bigg)^3  +\frac 12 \int_0^1  \| u(t) \|^2_{L^2(\R)} dt  \bigg]<\infty.
\label{LL4}
\end{align}

\noi
Otherwise, the result is trivial.

From the Skorokhod representation theorem of \cite{Jak}, 
there exist random variables $(\mathbb{Y}_\eps, r_\eps)_\eps$ and $(\mathbb{Y}, r)$ defined on a common probability space $(\O, \mathcal{G}, \mathbb{Q})$ such that 
\begin{align}
\Law_{\mathbb{Q}}(\mathbb{Y}_\eps, r_\eps )&=\nu_\eps \label{Law1} \\
\Law_{\mathbb{Q}}(\mathbb{Y}, r )&=\nu  \label{Law2}
\end{align}

\noi
and $\mathbb{Y}_\eps$ converges to $\mathbb{Y}$ in $\mathcal{H}^{-\eta} \times \mathcal{H}^{-2\eta}$, $\mathbb{Q}$-almost surely, and $r_\eps $ converges to $r$ in $\Ha$, $\mathbb{Q}$-almost surely. Combined with Lemma \ref{LEM:Gamma1}, this implies that
\begin{align}
&\liminf_{\eps \to 0} \mathcal{F}^{V+f,\eps }(\nu_\eps) \notag \\
&=\liminf_{\eps \to 0} \E_{\mathbb{Q}}\Bigg[ f(\eps Y^\frac 12 +Z(r_\eps) )+\mathcal{E}(\mathbb{Y}_\eps, Z(r_\eps), \eps )   -\frac \ld4 \int_{\R} |Z(r_\eps )|^4 dx \notag  \\
&\hphantom{XXXXXXXXXX} +A\bigg( \int_{\R} |Z(r_\eps)|^2 dx \bigg)^3 +\frac 12 \int_0^1 \|r_\eps(t) \|_{L^2}^2 dt \Bigg],
\label{Ga00}
\end{align}

\noi
where $Y$ is the first component of $\mathbb{Y}$.

From \eqref{LL3} and \eqref{coer0}, we have 
\begin{align}
-\frac \ld4 \int_{\R} |Z(r_\eps )|^4 dx +A\bigg( \int_{\R} |Z(r_\eps)|^2 dx \bigg)^3 +\frac 12 \int_0^1 \|r_\eps(t) \|_{L^2}^2 dt \ge H^G(Z(r_\eps)) \ge 0
\label{Ga0}
\end{align}

\noi
It follows from \eqref{Ga0} and  Fatou's lemma  that 
\begin{align}
&\liminf_{\eps \to 0} \E_{\mathbb{Q} } \Bigg[-\frac \ld4 \int_{\R} |Z(r_\eps )|^4 dx +A\bigg( \int_{\R} |Z(r_\eps)|^2 dx \bigg)^3 +\frac 12 \int_0^1 \|r_\eps(t) \|_{L^2(\R)}^2 dt   \Bigg] \notag \\
&\ge \E_{\mathbb{Q} } \Bigg[-\frac \ld4 \int_{\R} |Z(r )|^4 dx +A\bigg( \int_{\R} |Z(r)|^2 dx \bigg)^3 +\frac 12 \int_0^1 \|r(t) \|_{L^2(\R)}^2 dt   \Bigg].
\label{Ga1} 
\end{align}

\noi
From \eqref{Law1}, Lemma \ref{LEM:error}, and \eqref{LL4}, we have 
\begin{align}
\E_{\mathbb{Q}} \big[|\mathcal{E}(\mathbb{Y}_\eps, Z(r_\eps), \eps )  |  \big]= \E_{\nu_\eps} \big[|\mathcal{E}(\mathbb{W}, Z(u), \eps ) |   \big]  \les \eps^\frac 12 \cdot C+\eps^\frac 12  \mathcal{U}(u)=O(\eps^\frac 12),
\label{Ga2}
\end{align}

\noi
where $C$ arises from computing the expected values of the higher moments for each component of $\mathbb{W}=(W, \, :\! |W|^2 \!: )$ in $\mathcal{H}^{-\eta}$.

Combining \eqref{Ga00}, \eqref{Ga1}, \eqref{Law2}, and taking $\liminf$s on both sides of \eqref{Ga2} yields 
\begin{align*}
&\liminf_{\eps \to 0} \mathcal{F}^{V+f,\eps }(\nu_\eps) \\
&\ge \E_{\mathbb{Q} } \Bigg[f( Z(r)) -\frac \ld4 \int_{\R} |Z(r )|^4 dx +A\bigg( \int_{\R} |Z(r)|^2 dx \bigg)^3 +\frac 12 \int_0^1 \|r(t) \|_{L^2(\R)}^2 dt   \Bigg]\\
&= \mathcal{F}^{V+f,0}(\nu).
\end{align*}

\noi
This completes the proof of \eqref{Ga000}.

We now prove the limsup inequality. Let $\nu \in \cj  { \mathcal{X} }$.  We choose the recovery sequence  $\{\nu_\eps\}$ by setting  $\nu_\eps =\nu$ for every $\eps >0$.
Then, clearly, $\nu_\eps $ converges weakly to $\nu$. In the following, we show that for this recovery sequence $\nu_\eps=\nu$, 
\begin{align}
\limsup_{\eps \to 0}  \mathcal{F}^{V+f,\eps} (\nu^\eps ) \le \mathcal{F}^{V+f,0}(\nu). 
\label{Ga30}
\end{align}

\noi
We may assume that $\mathcal{F}^{V+f,0}(\nu)<\infty$. Otherwise, the statement is trivial. Note that 
\begin{align}
&\limsup_{\eps \to 0}  \mathcal{F}^{V+f,\eps} (\nu^\eps )=\limsup_{\eps \to 0}  \mathcal{F}^{V+f,\eps} (\nu ) \notag \\
&=\limsup_{\eps \to 0} \E_{\nu }\Bigg[ f(\eps^\frac 12 W+Z(u)) +\mathcal{E}(\mathbb{W}, Z(u),\eps )  -\frac \ld4\int_{\R} |Z(u)|^4 dx \notag \\
&\hphantom{XXXXXXXXXXXXXX}+A\bigg(\int_{\R}|Z(u)|^2 dx \bigg)^3 +\frac 12 \int_0^1 \| u(t) \|_{L^2(\R)}^2 dt\Bigg]
\label{Ga3}
\end{align}

\noi
From Lemma \ref{LEM:error}, we have 
\begin{align}
\E_{\nu } \big[|\mathcal{E}(\mathbb{W}, Z(u), \eps ) |   \big]  \les \eps^\frac 12 \cdot C+\eps^\frac 12  \mathcal{F}^{f+V,0 }(\nu) =O(\eps^\frac 12),
\label{Ga4}
\end{align}

\noi
where $C$ arises from computing the expected values of the higher moments for each component of $\mathbb{W}=(W, \, :\! |W|^2 \!: )$ in $\mathcal{H}^{-\eta}$. Hence, it follows from \eqref{Ga3} and \eqref{Ga4} that
\begin{align*}
&\limsup_{\eps \to 0}  \mathcal{F}^{V+f,\eps} (\nu^\eps )\\
&\le  \E_{\nu} \Bigg[ f(Z(u)) -\frac \ld4\int_{\R} |Z(u)|^4 dx+A\bigg(\int_{\R}|Z(u)|^2 dx \bigg)^3 +\frac 12 \int_0^1 \| u(t) \|_{L^2(\R)}^2 dt  \Bigg]\\
&=\mathcal{F}^{f+V,0}(\nu).
\end{align*}

\noi
This completes the proof of \eqref{Ga30}.

\end{proof}

In the following lemma, we establish equicoercivity.
\begin{lemma}\label{LEM:Ga3}
The family  $\{ \mathcal{F}^{V+f, \eps} \}_{0<\eps\le 1} $, defined in \eqref{L22}, is equicoercive on $\cj {\mathcal{X}  }$.
\end{lemma}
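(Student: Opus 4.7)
The plan is to verify equicoercivity in the standard equivalent form: for every $c \in \R$, the sublevel set $\{\nu \in \cj{\mathcal{X}} : \mathcal{F}^{V+f,\eps}(\nu)\le c\}$ is to be contained in a common compact subset of $\cj{\mathcal{X}}$, uniformly in $\eps \in (0,1]$. Since convergence in $\cj{\mathcal{X}}$ combines weak convergence with a uniform $L^2$-moment bound on the drift, and every element of $\cj{\mathcal{X}}$ has fixed first marginal $\Law(\mathbb{W})$, this compactness question reduces to producing a single bound
\begin{align*}
\sup_{\eps\in (0,1]}\;\sup_{\nu:\mathcal{F}^{V+f,\eps}(\nu)\le c}\E_\nu\bigg[\int_0^1\|u(t)\|_{L^2(\R)}^2 dt\bigg]<\infty.
\end{align*}
Once this bound is in hand, tightness of the marginal laws of $u$ under a uniform $L^2$-moment bound, together with Fatou (which gives closedness of the sublevel set inside $\cj{\mathcal{X}}$), completes the compactness step.

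The key estimate to produce is therefore a uniform lower bound on $\mathcal{F}^{V+f,\eps}(\nu)$ in terms of the drift entropy. I would reuse the decomposition from the proof of Lemma~\ref{LEM:Ga2},
\begin{align*}
V(\eps^{1/2}W+Z(u))=-\tfrac{\ld}{4}\int_{\R}|Z(u)|^4 dx+A\Big(\int_{\R}|Z(u)|^2 dx\Big)^3+\mathcal{E}(\mathbb{W},Z(u),\eps),
\end{align*}
controlling $\mathcal{E}$ via Lemma~\ref{LEM:error} by
\begin{align*}
\E_\nu[|\mathcal{E}(\mathbb{W},Z(u),\eps)|]\les \eps^{1/2}+\eps^{1/2}\E_\nu\Big[A\|Z(u)\|_{L^2(\R)}^6+\tfrac{1}{2}\int_0^1\|u(t)\|_{L^2(\R)}^2 dt\Big],
\end{align*}
and then absorbing the quartic defect through the Gagliardo--Nirenberg--Sobolev inequality \eqref{GNS}, used as in \eqref{coer0}: for any small $\dl\in(0,1/2)$ and $A\ge A_0>c(\dl)$,
\begin{align*}
-\tfrac{\ld}{4}\|Z(u)\|_{L^4(\R)}^4+A\|Z(u)\|_{L^2(\R)}^6\ge -\dl\,\|Z(u)\|_{\mathcal{H}^1(\R)}^2+(A-c(\dl))\|Z(u)\|_{L^2(\R)}^6.
\end{align*}
Combining these with $\|Z(u)\|_{\mathcal{H}^1(\R)}^2\le \int_0^1\|u\|_{L^2(\R)}^2 dt$ from \eqref{LL3} and the boundedness of $f$ produces
\begin{align*}
\mathcal{F}^{V+f,\eps}(\nu) &\ge -\|f\|_\infty+\big(\tfrac 12-\dl-C\eps^{1/2}\big)\E_\nu\Big[\int_0^1\|u\|_{L^2}^2 dt\Big]\\
&\quad +(A-c(\dl)-CA\eps^{1/2})\E_\nu\big[\|Z(u)\|_{L^2}^6\big]-C\eps^{1/2}.
\end{align*}
Restricting to $\eps\le\eps_0$ for some small $\eps_0>0$ and choosing $\dl$ so that $c(\dl)<A$, which is permissible under $A\ge A_0$ of Lemma~\ref{LEM:coercive}, both coefficients stay bounded below by positive constants uniform in $\eps$. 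The desired moment bound $\E_\nu[\int_0^1\|u\|_{L^2}^2 dt]\le C_1(1+c+\|f\|_\infty)$ follows immediately, and the finitely many $\eps\in(\eps_0,1]$ are handled by the same estimate with constants depending on $\eps_0$.

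The main anticipated obstacle is the first step, namely obtaining the error estimate for $\mathcal{E}(\mathbb{W},Z(u),\eps)$ in a form where every contribution involving $Z(u)$ is controlled \emph{linearly} by $\|Z(u)\|_{L^2(\R)}^6$ or $\int_0^1\|u\|_{L^2(\R)}^2 dt$ with an $\eps^{1/2}$ prefactor, so that it can be reabsorbed by the coercive bound above. Any cross term that produces a genuine higher-order object such as $\|Z(u)\|_{L^4(\R)}^4$ would require a further application of GNS, and it is precisely at this stage that the renormalization constants arising from the Wick factor $:\!|\sqrt{\eps}W|^2\!:$ must be tracked carefully. Once the error estimate is in place, the remainder of the argument follows directly from the coercive structure of $H^G$ and from the definition of the topology on $\cj{\mathcal{X}}$.
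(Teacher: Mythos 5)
Your proposal is essentially the same as the paper's proof: both rely on the lower bound from Lemma~\ref{LEM:error} (together with the coercive structure coming from GNS as in \eqref{coer0}) to bound the moment quantities
$\E_\nu[\|Z(u)\|_{L^2}^6]$ and $\E_\nu[\int_0^1\|u\|_{L^2}^2\,dt]$, and then invoke compactness of moment-bounded subsets of $\cj{\mathcal{X}}$. The only structural difference is cosmetic: you use the sublevel-set formulation of equicoercivity, whereas the paper works with its Definition~3.4 (a compact set on which the infimum is attained). These are interchangeable once one observes, as the paper does in \eqref{Ga6}, that $\sup_{0<\eps\le 1}\inf_\nu\mathcal{F}^{V+f,\eps}(\nu)<\infty$ — you omit this check, though it is immediate from taking $u=0$ and the boundedness of $f$. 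The paper constructs $\mathcal{K}$ directly with both moment bounds and cites \cite[Lemma~10]{BG} for its compactness; your ``tightness plus Fatou'' sketch is plausible but is essentially a re-derivation of that lemma, so you should cite it rather than argue informally, especially since early in your reduction you claim that only the drift-entropy moment matters, while the compactness criterion used in the paper requires both moments (your final estimate does give both, so the content is right, but the initial reduction step is misstated). Finally, ``the finitely many $\eps\in(\eps_0,1]$'' is wrong as stated — this is an interval, not a finite set; the correct statement is that on $[\eps_0,1]$ the prefactors $(\tfrac12-\dl-C\eps^{1/2})$ and $(A-c(\dl)-CA\eps^{1/2})$ need not be positive, but the error term is bounded by a constant depending on $\eps_0$ so a weaker (but still uniform) moment bound holds there. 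This is also the point where the paper leans on the precise form of Lemma~\ref{LEM:error}, whose lower bound $\mathcal{F}^{V+f,\eps}(u)\ge -\eps^{1/2}C+(1-\dl)\mathcal{C}(u)$ is stated with $\dl$ independent of $\eps\in(0,1]$; for this to hold uniformly the error estimate must in fact have the Young-type form $|\mathcal{E}|\le c_\kappa\eps^{1/2}+\kappa\,\mathcal{C}(u)$ with $\kappa$ arbitrarily small, and citing the lemma directly (as the paper does) is cleaner than re-deriving the intermediate steps with a fixed constant $C$ in front of $\eps^{1/2}\mathcal{C}(u)$.
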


\begin{proof}
We show that there exists a compact set $\mathcal{K} \subset \cj{\mathcal{X}}$ such that  
\begin{align*}
\inf_{\nu \in \cj{\mathcal{X}} } \mathcal{F}^{f+V, \eps}(\nu)=\inf_{\nu \in \mathcal{K}}\mathcal{F}^{f+V,\eps}(\nu).
\end{align*}

\noi
for every $0<\eps \le 1$.  Given $M>0$, to be chosen later, we set
\begin{align*}
\mathcal{K}:=\Bigg\{  \nu \in \cj{ \mathcal{X}  } : \E_{\nu }\Big[  \| Z(u) \|_{L^2}^6  \Big] +\E_{\nu }  \bigg[ \int_0^1 \| u(t) \|_{L^2(\R)}^2 dt \bigg]   \le M   \Bigg\}.
\end{align*}

\noi
Then, it follows from \cite[Lemma 10]{BG} that $\mathcal{K}$ is a compact set.

Note that from Lemma \ref{LEM:error}, we have 
\begin{align}
\mathcal{F}^{V+f, \eps}(\nu) \ge -C +(1-\dl) \E_{\nu}\Bigg[ A\bigg( \int_{\R} |Z(u)|^2 dx \bigg)^3+ \frac 12 \int_{0}^1 \| u(t) \|_{L^2(\R)}^2 dt  \Bigg]
\label{Ga5}
\end{align} 

\noi
for some small $\dl>0$, where $C$ comes from the expected values of the higher moments for each component of $\mathbb{W}=(W, \, :\! |W|^2 \!: )$ in $\mathcal{H}^{-\eta}$. 
In particular, Lemma \ref{LEM:error} also implies 
\begin{align}
\sup_{0<\eps \le 1} \inf_{\nu \in \cj{\mathcal{X}} } \mathcal{F}^{f+V, \eps}(\nu)  <\infty. 
\label{Ga6}
\end{align}

Combining \eqref{Ga5} and \eqref{Ga6} with the fact that  $M$ is taken sufficiently large, we obtain 
\begin{align*}
\inf_{\nu \notin \mathcal{K}} \mathcal{F}^{V+f, \eps}(\nu) \ge c_1M-C >  \sup_{0<\eps\le 1} \inf_{\nu \in \cj{\mathcal{X}} } \mathcal{F}^{f+V, \eps}(\nu)   
\end{align*}

\noi
for some $c_1>0$. This yields 
\begin{align*}
\inf_{\nu \in \cj{\mathcal{X}} } \mathcal{F}^{f+V, \eps}(\nu)=\inf_{\nu \in \mathcal{K}}\mathcal{F}^{f+V,\eps}(\nu)
\end{align*}

\noi
for every $0<\eps \le 1$, as desired.
\end{proof}

We are now ready to present the proof of Lemma \ref{LEM:Ga0}.
\begin{proof}[Proof of Lemma \ref{LEM:Ga0}]
Combining Lemmas \ref{LEM:Ga2} and \ref{LEM:Ga3}, based on Proposition \ref{PROP:Ga}, yields
\begin{align}
\lim_{\eps \to 0}  \inf_{\nu \in \cj {\mathcal{X}}}\mathcal{F}^{V+f, \eps}(\nu)=\inf_{ \nu \in \cj  { \mathcal{X} } } \mathcal{F}^{V+f,0}(\nu).
\label{Ga80}
\end{align}

It follows from \cite[Lemma 8]{BG1}  that 
\begin{align}
\inf_{ \nu \in \cj  { \mathcal{X} } } \mathcal{F}^{V+f, \eps }(\nu)&=\inf_{ \nu \in  \mathcal{X}  } \mathcal{F}^{V+f, \eps }(\nu)\label{Ga7}\\
\inf_{ \nu \in \cj  { \mathcal{X} } } \mathcal{F}^{V+f,0}(\nu)&=\inf_{ \nu \in  \mathcal{X}  } \mathcal{F}^{V+f,0}(\nu).
\label{Ga8}
\end{align}

\noi
By the definition of the measure $\nu=\Law_{\PP}(\mathbb{W},u) \in \mathcal{X}$, where $\mathbb{W}$ is fixed and $u$ varies over $\Ha$, we have  
\begin{align}
\inf_{ \nu \in  \mathcal{X}  } \mathcal{F}^{V+f, \eps }(\nu)&=\inf_{u \in \Ha } \mathcal{F}^{V+f, \eps }(u) \label{Ga9}\\
\inf_{ \nu \in  \mathcal{X}  } \mathcal{F}^{V+f, 0 }(\nu)&=\inf_{u \in \Ha } \mathcal{F}^{V+f, 0 }(u).
\label{Ga10}
\end{align}

\noi
By using \eqref{Ga9}, \eqref{Ga7}, \eqref{Ga80}, \eqref{Ga8}, and \eqref{Ga10}, we obtain
\begin{align*}
\lim_{\eps \to 0}  \inf_{u \in \Ha } \mathcal{F}^{V+f, \eps }(u)= \lim_{\eps \to 0}  \inf_{\nu \in \cj {\mathcal{X}}}\mathcal{F}^{V+f, \eps}(\nu)=\inf_{ \nu \in \cj  { \mathcal{X} } } \mathcal{F}^{V+f,0}(\nu)=\inf_{u \in \Ha } \mathcal{F}^{V+f, 0 }(u).
\end{align*}

\noi
This completes the proof of Lemma \ref{LEM:Ga0}.

\end{proof}

\section{Asymptotic analysis of thermodynamic functions}

In this section, we analyze two fundamental thermodynamic functions: (i) the microcanonical entropy $\rho_{\epsilon,A}(\{M^w(\phi) \in [D - r, D + r]\})$ and (ii) the free energy $\epsilon \log Z_{\epsilon,A}$.

Recall that the mixed Gibbs ensemble $\rho^D_{\eps,r}$ \eqref{Gibbs4} is defined as the conditional probability distribution  
\begin{align*}
\rho_{\eps,r}^D(B)=\rho_{\eps,A}\big\{B \big|  M^w(\phi)\in [D-r,D+r] \big\}=\frac{ \rho_{\eps,A} \big( B\cap \{M^w(\phi)\in [D-r,D+r]     \} \big)  }{\rho_{\eps,A}  \big( \{ M^w(\phi) \in [D-r,D+r]  \} \big)  },
\end{align*}

\noi
where $B$ is a measurable set. In order to study the conditional distribution, we investigate the asymptotic behavior of the microcanonical entropy in the low-temperature limit.

\begin{proposition}\label{PROP:WL2}
Let $D^*>0$ be as in \eqref{D0}. Then, for any $D>D^*$, we have 
\begin{align*}
\lim_{r \to 0}\lim_{\eps \to 0}\eps \log \rho_{\eps, A} \big( \{ M^w(\phi) \in [D-r,D+r] \} \big)=-\inf\limits_{\substack{ M(\phi)=D}} H^G(\phi),
\end{align*}

\noi
where $M^w$ is the Wick renormalized $L^2$ norm in \eqref{Wick2} and $H^G$ is the grand canonical Hamiltonian  in \eqref{Ha1}.

\end{proposition}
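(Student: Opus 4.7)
The plan is to adapt the variational (Bou\'e--Dupuis) framework from Proposition~\ref{PROP:GLDP} to the discontinuous shell constraint $\{M^w(\phi)\in[D-r,D+r]\}$ by sandwiching the indicator between continuous cutoffs and running a Gamma-convergence argument with an extra penalization term.

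\textbf{Step 1 (Reduction to an unnormalized expectation).} From
\begin{align*}
\rho_{\eps,A}\bigl(\{M^w(\phi)\in[D-r,D+r]\}\bigr)=Z_{\eps,A}^{-1}\,\E_{\mu_\eps}\bigl[e^{-\eps^{-1}V(\phi)}\,\mathbf 1_{\{M^w(\phi)\in[D-r,D+r]\}}\bigr],
\end{align*}
and the Gamma-convergence arguments of Lemmas~\ref{LEM:Ga0}--\ref{LEM:Ga1} applied to $f\equiv 0$ (together with $\inf_\phi H^G(\phi)=0$ from Lemma~\ref{LEM:coercive}), one obtains $\lim_{\eps\to 0}\eps\log Z_{\eps,A}=0$. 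It therefore suffices to prove
\begin{align*}
\lim_{r\to 0}\lim_{\eps\to 0}\eps\log\E_{\mu_\eps}\bigl[e^{-\eps^{-1}V(\phi)}\,\mathbf 1_{\{M^w(\phi)\in[D-r,D+r]\}}\bigr]=-\inf_{M(\phi)=D}H^G(\phi).
\end{align*}

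\textbf{Step 2 (Sandwich and variational formula).} Pick continuous $\chi_r^\pm:\R\to[0,1]$ with $\chi_r^-\le\mathbf 1_{[D-r,D+r]}\le\chi_r^+$, where $\supp\chi_r^+\subset[D-2r,D+2r]$ and $\chi_r^-$ is strictly positive on a slightly shrunken interior subinterval. Rewriting $\chi(M^w(\phi))=e^{\log\chi(M^w(\phi))}$ and applying Bou\'e--Dupuis (Lemma~\ref{LEM:var}) to $F=V-\eps\log\chi(M^w(\cdot))$ gives
\begin{align*}
&-\eps\log\E_{\mu_\eps}\bigl[e^{-\eps^{-1}V(\phi)}\chi(M^w(\phi))\bigr]\\
&=\inf_{u\in\Ha}\E\Bigl[V(\sqrt\eps W+Z(u))-\eps\log\chi\bigl(M^w(\sqrt\eps W+Z(u))\bigr)+\tfrac12\!\int_0^1\!\|u(t)\|_{L^2(\R)}^2\,dt\Bigr].
\end{align*}

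\textbf{Step 3 (Cancellation of renormalization and Gamma-limit).} The key algebraic identity, from bilinearity of the Wick product and the scaling $:\!|\sqrt\eps W|^2\!:\,=\eps:\!|W|^2\!:$, is
\begin{align*}
M^w\bigl(\sqrt\eps W+Z(u)\bigr)=\eps\,M^w(W)+\|Z(u)\|_{L^2(\R)}^2+2\sqrt\eps\,\Re\!\int_\R W\,\overline{Z(u)}\,dx,
\end{align*}
whose first and third terms vanish in probability as $\eps\to 0$, so $M^w(\sqrt\eps W+Z(u))\to M(Z(u))$. This is precisely the mechanism by which the diverging counterterm drops out at the level of the rate function. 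Rerunning the Gamma-convergence argument of Lemmas~\ref{LEM:Ga2}--\ref{LEM:Ga3} for the modified functional at the level of joint laws of the enhanced data $(W,:\!|W|^2\!:)$ and $u$, with the added penalty $-\eps\log\chi$, then yields
\begin{align*}
\lim_{\eps\to 0}\eps\log\E_{\mu_\eps}\bigl[e^{-\eps^{-1}V(\phi)}\chi(M^w(\phi))\bigr]=-\inf_{M(\phi)\in\supp\chi}H^G(\phi),
\end{align*}
because $-\log\chi=+\infty$ off $\supp\chi$ enforces an effective hard constraint in the Gamma-limit. Sandwiching with $\chi_r^\pm$, then sending $\chi_r^\pm\to\mathbf 1_{[D-r,D+r]}$ and $r\to 0$, continuity of the mass-constrained infimum (via Lemma~\ref{LEM:Min}) gives $\inf_{M(\phi)\in[D-r,D+r]}H^G(\phi)\to\inf_{M(\phi)=D}H^G(\phi)$.

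\textbf{Main obstacle.} The delicate step is Step~3: $\chi(M^w(\phi))$ is \emph{not} continuous on $\supp\mu_\eps\subset\mathcal S'$, so Proposition~\ref{PROP:GLDP} cannot be invoked as a black box, and both the liminf inequality and equicoercivity must be redone at the level of joint laws so that the Skorokhod coupling carries through the $\chi$-factor. A secondary subtlety is matching upper and lower bounds as the cutoffs are removed: one needs the infimum $\inf_{M(\phi)\in[D-r,D+r]}H^G(\phi)$ to be realized by a nontrivial competitor for small $r$, which is where the assumption $D>D^\ast$ (ensuring $\inf_{M=D}H<0$, so that the trivial $\phi\equiv 0$ is not the infimum) enters the matching estimate.
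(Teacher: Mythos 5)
Your proposal takes a genuinely different route from the paper. The paper never sandwiches the indicator between continuous cutoffs; for the upper bound it moves the indicator \emph{inside} the exponential, exploiting $e^{-V/\eps}\,\mathbf 1 \le e^{-V\mathbf 1/\eps}$, and for the lower bound it compares $Z_\eps=\E_{\mu_\eps}[e^{-\mathcal V/\eps}\,\mathbf 1]$ with $\widetilde Z_\eps=\E_{\mu_\eps}[e^{-\mathcal V\mathbf 1/\eps}]$ via $\widetilde Z_\eps-1\le Z_\eps\le\widetilde Z_\eps$, using an explicitly constructed adapted drift $u^0$ and a Chebyshev bound to control the probability of landing in the shell. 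Your Step~3 identity for $M^w(\sqrt\eps W+Z(u))$ is correct and is indeed the mechanism by which the renormalization cancels; this part matches the paper's expansion \eqref{WL22}, \eqref{A6}.

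However, there is a real gap in your Steps~2--3 that the ``main obstacle'' paragraph names but does not resolve. The penalty $-\eps\log\chi(M^w(\cdot))$ is not merely discontinuous; it is \emph{unbounded above}, equal to $+\infty$ on the complement of $\supp\chi$. Since $M^w(\sqrt\eps W+Z(u))=\eps M^w(W)+M(Z(u))+2\sqrt\eps\,\Re\int W\overline{Z(u)}$ retains a nondegenerate Gaussian spread for every fixed drift $u$, the event $\{M^w(\sqrt\eps W+Z(u))\notin\supp\chi\}$ has positive probability, so $\E[-\eps\log\chi]=+\infty$ for essentially every $u\in\Ha$. Consequently the integrability hypotheses of Lemma~\ref{LEM:var} fail for $F=V-\eps\log\chi(M^w(\cdot))$, and the Gamma-convergence machinery of Lemmas~\ref{LEM:Ga0}--\ref{LEM:Ga3} and Proposition~\ref{PROP:GLDP}, which is built for \emph{bounded} continuous test functionals $f$, does not transfer. ``Redoing the Skorokhod coupling so that the $\chi$-factor carries through'' does not address this: the problem is that the cost functional $\mathcal F^{V+f,\eps}$ is identically $+\infty$ before any limit is taken, not that the liminf inequality is delicate. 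One would instead have to work with drifts for which $M^w(\sqrt\eps W+Z(u))\in\supp\chi$ almost surely (using that adapted drifts may depend on $W$), which is essentially what the paper does by hand with the Chebyshev estimate, and at that point the sandwich scaffolding buys little.

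Your attribution of the hypothesis $D>D^*$ is also off. The constraint $M(\phi)\in[D-r,D+r]$ already excludes $\phi\equiv 0$ once $r<D$, so the infimum $\inf_{M(\phi)\in[D-r,D+r]}H^G(\phi)$ is automatically over nontrivial competitors and is strictly positive for any $D>0$ by \eqref{coer0}. The paper uses $D>D^*$ for something different: it guarantees $\inf_{M=D}H<0$ (note $H$, not $H^G$), which forces $\widetilde Z_\eps\to\infty$, and that divergence is exactly what makes the two-sided comparison $\widetilde Z_\eps-1\le Z_\eps\le\widetilde Z_\eps$ yield $\liminf\eps\log Z_\eps=\liminf\eps\log\widetilde Z_\eps$ in \eqref{F1}. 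If your sandwich argument worked as stated, it would prove the proposition without any hypothesis on $D$, which should itself be a warning sign.
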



\begin{proof}

We first prove the upper bound 
\begin{align*}
\limsup_{r \to 0}\limsup_{\eps \to 0}\eps \log \rho_{\eps, A} \big( \{ M^w(\phi) \in [D-r,D+r] \} \big) \le -\inf\limits_{\substack{ M(\phi)=D}} H^G(\phi).
\end{align*}

\noi 
From the definition $\rho_{\eps,A}$ of the grand canonical ensemble \eqref{Gibbs5}, we have
\begin{align}
&\eps \log \rho_{\eps, A} \big( \{ M^w(\phi) \in [D-r,D+r] \} \big) \notag \\
&=\eps \log Z_{\eps,A}(\big( \{ M^w(\phi) \in [D-r,D+r] \} \big))  -\eps\log Z_{\eps,A},
\label{WL1}
\end{align}

\noi
where $Z_{\eps,A}$ is the partition function and 
\begin{align}
Z_{\eps,A}\big( \{ M^w(\phi) \in [D-r,D+r] \} \big)&=\int_{\{ M^w(\phi) \in [D-r,D+r] \}}  e^{-\frac 1\eps V(\phi) } \mu_{\eps}(d\phi) \notag \\
&\le \int \exp\Big\{ -\frac 1\eps V(\phi) \ind_{ \{ M^w(\phi) \in [D-r,D+r] \}   }  \Big\} \mu_\eps(d\phi).
\label{WI0}
\end{align}

\noi 
From Proposition \ref{PROP:free}, the free energy is determined by the minimal energy configuration as follows
\begin{align}
\lim_{\eps \to 0}\eps \log Z_{\eps, A}=-\inf_{\phi \in \mathcal{H}^{1} }H^G(\phi)=0,
\label{F00}
\end{align}

\noi
where we used the fact that $\phi=0$ is the unique minimizer for the Hamiltonian $H^G$. See Lemma \ref{LEM:coercive}. Therefore, it suffices to consider the first term in \eqref{WL1}.

Note that if $u$ represents a Gaussian random variable with $\Law(u)=\mu_1$, applying the linear transformation $u \mapsto \sqrt{\eps} u$, $\sqrt{\eps} u$ yields a
Gaussian random variable with $\Law(\sqrt{\eps}u )=\mu_\eps$. Therefore, 
\begin{align*}
&\E_{\mu_\eps}\bigg[ \exp\Big\{ -\frac 1\eps V(\phi) \ind_{ \{ M^w(\phi) \in [D-r,D+r] \}   }  \Big\} \bigg]\\
&=\E_{\mu}\bigg[ \exp\Big\{ -\frac 1\eps V(\sqrt{\eps}\phi) \ind_{ \{ M^w(\sqrt{\eps} \phi) \in [D-r,D+r] \}   }  \Big\} \bigg].
\end{align*}

\noi
From the variational representation of the Gibbs measure (Lemma \ref{LEM:Gama}), combined with the indicator function, we obtain
\begin{align*}
& \eps \log \E_{\mu}\bigg[ \exp\Big\{ -\frac 1\eps V(\sqrt{\eps}\phi) \ind_{ \{ M^w(\sqrt{\eps} \phi) \in [D-r,D+r] \}   }  \Big\} \bigg]\\
&=\sup_{u\in \Ha} \E\Bigg[ -V(\eps^\frac 12 W+\eps^\frac 12 Z(u) )\ind_{  \{ M^w(\eps^{\frac 12}W+ \eps^\frac 12 Z(u) ) \in [D-r,D+r]  \}   } -\frac \eps{2} \int_0^1 \| u(t)\|_{L^2(\R)}^2 dt   \Bigg]
\end{align*}

\noi
where
\begin{align*}
V(\eps^{\frac 12}W+\eps^\frac 12 Z(u))=-\frac \ld4\bigg(\int_{\R}|\eps^{\frac 12}W+\eps^\frac 12 Z(u)|^4 dx \bigg)+A\bigg|\int_{\R} :\! |\eps^\frac 12 W+\eps^\frac 12 Z(u)|^2 \!: dx  \bigg|^3.
\end{align*}

\noi
By applying the change of variables $\eps^\frac 12 u \to u$, we have 
\begin{align}
&\eps \log \E_{\mu}\bigg[ \exp\Big\{ -\frac 1\eps V(\sqrt{\eps}\phi) \ind_{ \{ M^w(\sqrt{\eps} \phi) \in [D-r,D+r] \}   }  \Big\} \bigg] \notag \\
&=\sup_{u \in \Ha} \E\Bigg[ -V(\eps^\frac 12 W+Z(u) )\ind_{  \{ M^w(\eps^{\frac 12}W+Z(u) ) \in [D-r,D+r]  \}   } -\frac 12 \int_0^1 \| u(t)\|_{L^2_x(\R)}^2 dt   \Bigg] \notag \\
&\le \sup_{Z \in \mathbb{H}^1 } \E\bigg[ -V(\eps^\frac 12 W+Z )\ind_{  \{ M^w(\eps^{\frac 12}W+Z ) \in [D-r,D+r]  \}   } -\frac 12\| Z\|_{\mathcal{H}^1}^2   \bigg],
\label{A0}
\end{align}

\noi
where in the last line we used \eqref{LL3} and $\mathbb{H}^1$ represents the collection of drifts $Z$, characterized as processes that belong to $\H^1$ $\PP$-almost surely (possibly non-adapted). 

\noi 
In the following, we apply a change of variables to eliminate the Gaussian term $\eps^\frac 12 W$ as follows
\begin{align}
Z=-\eps^\frac 12 W_N+Q,
\label{cha0}
\end{align}

\noi
where $Q$ is in  $\mathbb{H}^1$ and $W_N=\P_N W$. In \eqref{cha0}, the Gaussian field $\eps^\frac 12 W_N$ can be regarded as a small-order fluctuation, and the typical configuration is determined by the drift $Q$. From \eqref{cha0} and  Young's inequality, we have
\begin{align}
&\frac \ld4 \int_{\R} | \eps^\frac12 (W-W_N) + Q|^4 dx \notag \\
&\le \ld C_\zeta \eps^2 \int_{\R} |W-W_N|^4 dx+\frac {\ld(1+\zeta)}4 \int_{\R} |Q|^4 dx
\label{A1}
\end{align}

\noi
for any $\zeta>0$, where $C_\zeta$  is a large constant arising from Young's inequality.

Regarding the entropy term, from \eqref{cha0} we obtain
\begin{align}
\frac 12 \| Z \|_{\mathcal{H}^1}^2&=\frac \eps2 \|   W_N\|_{\mathcal{H}^1}^2+\frac 12 \| Q\|_{\mathcal{H}^1}^2- \eps^\frac 12  \Re \int_{\R} \mathcal{L}^\frac 12 W_N \mathcal{L}^\frac 12 Q dx  \notag  \\
&\ge  \frac 12 (1-\eps^\frac 12) \| Q\|_{\mathcal{H}^1}^2- \frac{\eps^\frac 12}{4}  \|W_N \|_{\H^1}^2,
\label{A22}
\end{align}

\noi
which follows from choosing $\eps$  sufficiently small and Young's inequality
\begin{align*}
\eps^\frac 12 \bigg|\Re \int_{\R} \mathcal{L}^\frac 12 W_N \mathcal{L}^\frac 12 Q dx \bigg| \le \frac{\eps^\frac 12}2  \| W_N \|_{\mathcal{H}^1}^2+\frac{\eps^\frac 12}2  \|Q \|_{\mathcal{H}^1}^2. 
\end{align*}

\noi
Since $\E\big[ \|W_N \|_{\H^1}^2    \big]\sim O(N)$ as $N\to \infty$,  it follows from \eqref{A22} that
\begin{align}
\frac 12\E \big[  \| Z \|_{\mathcal{H}^1}^2 \big] \ge \frac 12 (1-\eps^\frac 12) \E\big[ \| Q\|_{\mathcal{H}^1}^2 \big]- C \eps^\frac 12 N 
\label{A2}
\end{align}

\noi
for some constant $C>0$.

We now consider the taming part. Expanding the renormalized $L^2$-norm yields 
\begin{align*}
M^w(\eps^\frac 12W+Z)&=\int_{\R} :\! |\eps^\frac 12W+Z|^2 \! : dx\\
&= \eps \int_{\R} :\! |W|^2 \!: dx+2\eps^\frac 12 \Re \int_{\R} WZ dx+\int_{\R} |Z|^2 dx.
\end{align*}

\noi 
From the change of variables $Z=-\eps^{\frac 12}W_N+Q,$ in \eqref{cha0}, we obtain
\begin{align}
M^w(\eps^\frac 12W+Z)&=\eps \int_{\R} :\! |W|^2 \!: dx-2\eps \Re \int_{\R} W\cdot W_N dx+\eps  \int_{\R} |W_N|^2 dx \notag \\
&\hphantom{X}+\int_{\R} |Q|^2 dx+2\eps^\frac 12 \Re \int_{\R} (W-W_N)Q dx \notag \\
&=\int_{\R} |Q|^2 dx+2\eps^\frac 12 \Re \int_{\R} (W-W_N)Q dx+ G(W_N,\eps),
\label{WL22}
\end{align}

\noi
where we split $M^w(\eps^\frac 12W+Z)$ into three parts  based on whether they depend on $\eps$ or $Q$. It follows from \eqref{WL22} that  
\begin{align}
A\bigg| \int_{\R}  :\! |\eps^\frac 12 W+Z|^2 \!: dx  \bigg|^3&=A\big| M^w(\eps^\frac 12W+Z) \big|^3 \notag \\
&=A \bigg| \int_\R |Q|^2 dx+2\eps^\frac 12 \Re \int_{\R} (W-W_N)Q dx+G(W_N,\eps) \bigg|^3 \notag \\
&\ge A(1-\zeta)\bigg( \int_{\R} |Q|^2 dx \bigg)^3 \notag \\
&\hphantom{X} -AC_\zeta \Bigg(\bigg|2\eps^\frac 12 \Re \int_{\R} (W-W_N)Q dx \bigg|^3+  |G(W_N, \eps) |^3 \Bigg),  
\label{A3}
\end{align}

\noi
where in the last step we used
\begin{align}
|a+b+c|^\g \ge (1-\zeta)|c|^\g-C_\zeta ( |a|^\g+|b|^\g)
\label{You1}
\end{align}

\noi
for any $a,b,c  \in \R$ and $\gamma > 0$, where $\zeta$ is an arbitrary small number and $C_\zeta$ is a sufficiently large corresponding  constant. Note that 
\begin{align}
\bigg| \int_{\R} (W-W_N)Q dx \bigg|^3 &\le \| W-W_N \|_{\H^{-\eta}}^3 \| Q \|_{\H^\eta}^3 \notag \\
&\le  \| W-W_N \|_{\H^{-\eta}}^3 \| Q \|_{L^2}^{3(1-\eta) } \|Q \|_{\H^1}^{3\eta },
\label{interp}
\end{align}

\noi 
where in the last step, we used the interpolation inequality
\begin{align*}
\| \phi \|_{\H^\eta}  \les \| \phi \|_{\mathcal{H}^1 }^\eta \| \phi \|_{L^2}^{1-\eta}.
\end{align*}

\noi
Since   $\frac{3(1-\eta)}{6}+\frac {3\eta}{2}<1$, \eqref{interp} and Young's inequality imply 
\begin{align}
A \eps^\frac 32 \bigg| \Re \int_{\R} (W-W_N)Q dx \bigg|^3 \les  A \eps^\frac 32  \| W-W_N \|_{\H^{-\eta}}^{c_0}+A \eps^\frac 32 \|Q \|_{L^2}^6+ A \eps^\frac 32 \| Q\|_{\H^1}^2.
\label{A4}
\end{align}

\noi
Combining \eqref{A3} and \eqref{A4} yields 
\begin{align}
A\bigg| \int_{\R}  :\! |\eps^\frac 12 W+Z|^2 \!: dx  \bigg|^3 \notag &\ge   A(1-\zeta-\eps^\frac 32)\bigg( \int_{\R} |Q|^2 dx \bigg)^3-A\eps^\frac 32 \| Q\|_{\mathcal{H}^1}^2 \notag \\
&\hphantom{X}-A \eps^\frac 32  \| W-W_N \|_{\H^{-\eta}}^{c_0}-AC_\zeta |G(W_N,\eps)|^3.
\label{A5}
\end{align}

\noi
Thanks to the fact that $W_N$ is a Gaussian field, we have $\E  \big[ \| W_N \|_{L^2(\R)}^6 \big] \les  \|  \big( \E |W_N|^6 \big)^\frac 16   \|_{L^2(\R)}^6  \les  \|  \big( \E |W_N|^2 \big)^\frac 12   \|_{L^2(\R)}^6 \sim (\log N)^3  $, which implies 
\begin{align}
\E \Big[ |G(W_N,\eps)|^3 \Big]  \les \eps^3 (\log N)^3
\label{A55}
\end{align}

\noi 
Combining \eqref{cha0}, \eqref{A1}, \eqref{A2}, \eqref{A5}, and \eqref{A55} yields 
\begin{align}
&\sup_{Z \in \mathbb{H}^1 } \E\bigg[ -V(\eps^\frac 12 W+Z )\ind_{  \{ M^w(\eps^{\frac 12}W+Z ) \in [D-r,D+r]  \}   } -\frac 12\| Z\|_{\mathcal{H}^1}^2   \bigg] \notag \\
&=\sup_{Q \in \mathbb{H}^1 } \E\bigg[ -V(\eps^\frac 12(W-W_N)+Q )\ind_{  \{ M^w(\eps^{\frac 12}(W-W_N)+Q ) \in [D-r,D+r]  \}   } -\frac 12\| \eps^\frac 12 W_N-Q \|_{\mathcal{H}^1}^2   \bigg]  \notag \\
&\le  \sup_{Q\in \mathbb{H}^1  } \E \Bigg[ \bigg( \frac{\ld(1+\zeta)}{4} \int_{\R} |Q|^4 dx  -A(1-\zeta-\eps^\frac 32)\bigg( \int_{\R} |Q|^2 dx \bigg)^3  -\frac 12 (1-\eps^\frac 12-2A\eps^\frac 32) \| Q\|_{\mathcal{H}^1}^2 \notag \\
&\hphantom{XXXXXXXXX}  \bigg)\cdot \ind_{  \{ M^w(\eps^{\frac 12}(W-W_N)+Q ) \in [D-r,D+r]  \}   } \Bigg] +C_\zeta \eps^2+C\eps^\frac 12 N+\eps^3 (\log N)^3 \notag \\
&=\sup_{Q\in \mathbb{H}^1  }\E\Big[ -H^G_{\zeta,\eps}(Q)\ind_{  \{ M^w(\eps^{\frac 12}(W-W_N)+Q ) \in [D-r,D+r]  \}   } \Big]+C_\zeta \eps^2+C\eps^\frac 12 N+\eps^3 (\log N)^3
\label{A66}
\end{align}

\noi
where 
\begin{align*}
H^{G}_{\zeta, \eps}(\phi)=\frac 12 (1-\eps^\frac 12-2A \eps^\frac 32 )\int_{\R} |\mathcal{L}^\frac 12 Q|^2 dx-\frac{\ld(1+\zeta)}{4} \int_{\R} |Q|^4 dx+A(1-\zeta-\eps^\frac 32) \bigg(\int_{R} |Q|^2 dx\bigg)^3.
\end{align*}

\noi
We will now handle the indicator $\mathbf{1}_{\{M^w(...) , \mathrm{etc.}\}}$ by separating the mass $M^w(\epsilon^{1/2} (W- W_N) + Q)$ into $M(Q) + $ (fluctuations depending on $\epsilon$). Using \eqref{WL22} together with the change of variables $Z=-\eps^{\frac 12}W_N+Q$ in \eqref{cha0}, we expand the renormalized $L^2$-norm 
\begin{align}
M^w(\eps^\frac 12W+Z)&=\int_{\R} |Q|^2 dx+\Phi(Q,W,\eps),
\label{A6}
\end{align}

\noi
where
\begin{align*}
\Phi(Q,W,\eps)=\eps \int_{\R} :\! |W|^2 \!: dx-2\eps \Re \int_{\R} W\cdot W_N dx+\eps  \int_{\R} |W_N|^2 dx +2\eps^\frac 12 \Re \int_{\R} (W-W_N)Q dx.
\end{align*}

\noi
From \eqref{A6},  we have 
\begin{align}
\big\{ M^w(\eps^{\frac 12}W+Z ) \in [D-r,D+r]  \big \}   \cap \big\{ |\Phi(W,Q,\eps)| < r  \big\}=\big \{  M(Q)  \in \big[D-2r, D+2r\big]   \big \},
\label{WL3}
\end{align}

\noi
where $M(Q)=\|Q \|_{L^2(\R)}^2$.  By splitting \eqref{A66} into the two cases  $\big\{ |\Phi(W,Q,\eps)| < r  \big\}$ and $\big\{ |\Phi(W,Q,\eps)| \ge r  \big\}$, and applying \eqref{WL3}, we obtain
\begin{align}
&\sup_{Z \in \mathbb{H}^1 } \E\bigg[ V(\eps^\frac 12 W+Z )\ind_{  \{ M^w(\eps^{\frac 12}W+Z ) \in [D-r,D+r]  \}   } -\frac 12\| Z\|_{\mathcal{H}^1}^2   \bigg] \notag  \\
& \le   \sup_{Q\in \mathbb{H}^1  }\E\Big[ -H^G_{\zeta,\eps}(Q)\ind_{  \{ M(Q ) \in [D-2r,D+2r]  \}   } \Big] \notag \\
&\hphantom{X}+\sup_{Q\in \mathbb{H}^1  }\E\Big[ -H^G_{\zeta,\eps}(Q) \ind_{  \{ M^w(\eps^{\frac 12}(W-W_N)+Q ) \in [D-r,D+r]  \}   }  \ind_{ \{ |\Phi(W,Q,\eps)| \ge r  \}      } \Big] \notag \\
&\hphantom{X}+C_\zeta \eps^2+C\eps^\frac 12 N+\eps^3 (\log N)^3 \notag \\
&\le  \sup_{Q\in \mathbb{H}^1  }\E\Big[ -H^G_{\zeta,\eps}(Q)\ind_{  \{ M(Q ) \in [D-2r,D+2r]  \}   } \Big]+C_\zeta \eps^2+C\eps^\frac 12 N+\eps^3 (\log N)^3,
\label{A7} 
\end{align}

\noi
where in the last line we used $H^G_{\zeta,\eps}(\phi) \ge 0 $ for any $\phi \in \H^1$
as long as the chemial potential $A$ is sufficiently large. See \eqref{coer0}.

It follows from \eqref{WL1}, \eqref{WI0}, \eqref{A0}, \eqref{A7}, and taking the limits $\eps \to 0, r\to 0$ that  
\begin{align*}
&\limsup_{r \to 0}\limsup_{\eps \to 0}\eps \log \rho_{\eps, A} \big( \{ M^w(\phi) \in [D-r,D+r] \} \big) \\
&\le 
\limsup_{r \to 0}\limsup_{\eps \to 0}\eps \log \E_{\mu}\bigg[ \exp\Big\{ -\frac 1\eps V(\sqrt{\eps}\phi) \ind_{ \{ M^w(\sqrt{\eps} \phi) \in [D-r,D+r] \}   }  \Big\} \bigg]\\
&  \le -\inf_{\substack{   M(\phi)=D}} H^G(\phi),
\end{align*}

\noi
where in the last step we also take the limit $\zeta \to 0$ since \eqref{A7} holds for any $\zeta>0$ arising from \eqref{A1} and \eqref{You1}. This completes the proof of the upper bound.

We now prove the lower bound 
\begin{align}
\liminf_{r \to 0}\liminf_{\eps \to 0}\eps \log \rho_{\eps, A} \big( \{ M^w(\phi) \in [D-r,D+r] \} \big) \ge -\inf\limits_{\substack{ M(\phi)=D}} H^G(\phi).
\label{C0}
\end{align}

\noi
Note that 
\begin{align*}
&\eps \log \rho_{\eps, A} \big( \{ M^w(\phi) \in [D-r,D+r] \} \big)\\
&=\eps \log \E_{\mu_\eps}\bigg[ \exp\Big\{ -\frac 1\eps V(\phi)  \Big\} \ind_{  \{ M^w(\phi) \in [D-r,D+r]  \}  } \bigg]-\eps \log Z_{\eps, A},
\end{align*}

\noi
where 
\begin{align*}
V(\phi)=-\frac \ld4\int_{\R}|\phi|^4 dx +A\bigg|\int_{\R} :\! |\phi|^2 \!: dx  \bigg|^3.
\end{align*}

\noi
By using the microcanonical condition $\{ M^w(\phi) \in [D-r,D+r]  \}$, we have 
\begin{align}
&\eps \log \rho_{\eps, A} \big( \{ M^w(\phi) \in [D-r,D+r] \} \big) \notag \\
&\ge \eps \log \E_{\mu_\eps}  \bigg[ \exp\Big\{-\frac 1\eps \mathcal{V} (\phi)  \Big\}  \ind_{ \{ M^w(\phi) \in [D-r,D+r]  \}  }  \bigg] -A(D+r)^3-\eps \log Z_{\eps,A},
\label{F0}
\end{align}

\noi
where
\begin{align*}
\mathcal{V}(\phi)=-\frac{\ld}4 \int_{\R} |\phi|^4 dx.
\end{align*}

\noi
Based on the following observation
\begin{align*}
&\E_{\mu_\eps}\bigg[ \exp\Big\{ -\frac 1\eps \mathcal{V}(\phi) \ind_{ \{ M^w(\phi) \in [D-r,D+r] \}   }  \Big\} \bigg]-1\\
&\le \E_{\mu_\eps}\bigg[ \exp\Big\{ -\frac 1\eps \mathcal{V}(\phi)   \Big\} \ind_{ \{ M^w(\phi) \in [D-r,D+r] \}   } \bigg]\\
&\le \E_{\mu_\eps}\bigg[ \exp\Big\{ -\frac 1\eps \mathcal{V}(\phi) \ind_{ \{ M^w(\phi) \in [D-r,D+r] \}   }  \Big\} \bigg], 
\end{align*}

\noi
we have 
\begin{align}
\log \wt Z_\eps +\log \Big(1-\frac 1{\wt Z_\eps}  \Big) \le \log Z_\eps \le \log \wt Z_\eps,
\label{C00}
\end{align}

\noi
where
\begin{align*}
\wt Z_\eps:&=\E_{\mu_\eps}\bigg[ \exp\Big\{ -\frac 1\eps \mathcal{V}(\phi) \ind_{ \{ M^w(\phi) \in [D-r,D+r] \}   }  \Big\} \bigg]\\
Z_\eps &=\E_{\mu_\eps}\bigg[ \exp\Big\{ -\frac 1\eps \mathcal{V}(\phi)  \Big\} \ind_{ \{ M^w(\phi) \in [D-r,D+r] \}   }  \bigg].
\end{align*}

\noi

\noi
In the following, we show that 
\begin{align}
\liminf_{r\to 0}\liminf_{\eps \to 0} \eps \log \wt Z_\eps \ge -\inf_{M(\phi)=D}H(\phi), 
\label{C0000}
\end{align}

\noi
which implies
\begin{align}
\wt Z_\eps \ges e^{\frac{c(D)}{\eps}} \to \infty 
\label{C000}
\end{align}

\noi
as $\eps \to 0$ and $r \to 0$, where $C(D)>0$ arises from the negative minimal energy
$\inf\limits_{M(\phi)=D}H(\phi)<0$ in \eqref{D0}. Combined with \eqref{C00} and \eqref{C000}, we obtain 
\begin{align}
\liminf_{r \to 0} \liminf_{\eps \to 0} \eps \log \wt Z_\eps=   \liminf_{r \to 0} \liminf_{\eps \to 0} \eps \log  Z_\eps.
\label{F1}
\end{align}

\noi
Therefore, the asymptotic behavior of $\eps \log Z_\eps$ can be obtained by studying $\eps \log \wt Z_\eps$. It follows from \eqref{F0}, \eqref{F1}, \eqref{C0000}, and \eqref{F00} that
\begin{align*}
&\liminf_{r\to 0}\liminf_{\eps \to 0}\eps \log \rho_{\eps, A} \big( \{ M^w(\phi) \in [D-r,D+r] \} \big)\\
&\ge \liminf_{r\to 0}\liminf_{\eps \to 0} \eps \log \E_{\mu_\eps}  \bigg[ \exp\Big\{-\frac 1\eps \mathcal{V} (\phi) \ind_{ \{ M^w(\phi) \in [D-r,D+r]  \}  }   \Big\}   \bigg] -AD^3\\
& \ge -\inf_{M(\phi)=D}H(\phi)-AD^3\\
&= -\inf_{M(\phi)=D}H^G(\phi).
\end{align*}

\noi
This shows the lower bound \eqref{C0}.  

It remains to prove  \eqref{C0000}. From \eqref{A0}, we write 
\begin{align}
&\eps \log \E_{\mu_\eps}\bigg[ \exp\Big\{ -\frac 1\eps \mathcal{V}(\phi) \ind_{ \{ M^w(\phi) \in [D-r,D+r] \}   }  \Big\} \bigg] \notag \\
&=\sup_{u \in \Ha}\E\bigg[ -\mathcal{V}(\eps^\frac 12 W+Z(u) )\ind_{  \{ M^w(\eps^{\frac 12}W+Z(u) ) \in [D-r,D+r]  \}   }  - \frac 12\int_0^1 \|u(t) \|_{L^2_x(\R)}^2 dt  \bigg].
\label{C1}
\end{align}

\noi
We choose a specific drfit $u \in \Ha$, defined by
\begin{align*}
u^0(t)=\frac{1}{\eta} \ind_{ \{  t> 1-\eta \} } \mathcal{L}^\frac 12(-\eps^\frac 12 W_N^0+Q),
\end{align*}

\noi
where 
\begin{align*}
W_N^0:=\sum_{|n|\le N} \frac{B_n(1-\eta)}{\ld_n} h_n(x)
\end{align*}

\noi
and $Q$ is the ground state with $L^2$ mass $\| Q\|_{L^2(\R)}^2=D$, namely the minimizer of $H$ in Lemma \ref{LEM:Min}.  Then, thanks to the cutoff $\ind_{ \{  t> 1-\eta \} }$ and the definition of $W_N^0$, the drift $u^0(t)$ belongs to the admissible class $\Ha$, being adapted to the filtration. From the definition of $Z(u)(t)$ in \eqref{defZu},
\begin{align}
Z(u)=Z(u)(1)=\int_0^1 \mathcal{L}^{-\frac 12} u^0(t) dt=-\eps^\frac 12 W_N^0+Q.
\label{C2}
\end{align}

\noi
Combining \eqref{C1} and \eqref{C2} yields 
\begin{align}
&\eps \log \E_{\mu_\eps}\bigg[ \exp\Big\{ -\frac 1\eps \mathcal{V}(\phi) \ind_{ \{ M^w(\phi) \in [D-r,D+r] \}   }  \Big\} \bigg] \notag \\
&\ge  \E\bigg[ -\mathcal{V}(\eps^\frac 12 (W-W_N^0)+Q )\ind_{  \{ M^w(\eps^\frac 12 (W-W_N^0)+Q  ) \in [D-r,D+r]  \}   } -\frac \eps2 \| W_N^0\|^2_{\mathcal{H}^1} -\frac 12 \|Q \|_{\mathcal{H}^1}^2 \notag \\
&\hphantom{XXXX}- \eps^\frac 12\Re \int_{\R} \mathcal{L}^\frac 12 W_N^0 \mathcal{L}^\frac 12 Q dx      \bigg] \notag \\
&\ge  \E\bigg[ -\mathcal{V}(\eps^\frac 12 (W-W_N^0)+Q )\ind_{  \{ M^w(\eps^\frac 12 (W-W_N^0)+Q  ) \in [D-r,D+r]  \}   } -\frac {\eps+\eps^\frac 12}2 \| W_N^0\|^2_{\mathcal{H}^1} -\frac {1+\eps^\frac 12}2 \|Q \|_{\mathcal{H}^1}^2  \bigg] \notag \\
& \ge \E\bigg[ -\mathcal{V}(\eps^\frac 12 (W-W_N^0)+Q )\ind_{  \{ M^w(\eps^\frac 12 (W-W_N^0)+Q  ) \in [D-r,D+r]  \}   } \bigg]- C \eps^\frac 12  N-\frac {1+\eps^\frac 12}2 \|Q \|_{\mathcal{H}^1}^2
\label{C22}
\end{align}

\noi
for some constant $C>0$, where in the third line and the last line, we used Young's inequality and $\E\big[ \| W_N^0 \|_{L^2}^2\big] \sim N$ as $N \to \infty$, respectively.

Recall  from \eqref{A6} and \eqref{C2} that 
\begin{align}
M^w(\eps^\frac 12 (W-W_N^0)+Q  )=\int_{\R} |Q|^2 dx+\Phi(Q,W_N^0,\eps)
\label{C4}
\end{align}

\noi
where
\begin{align*}
\Phi(Q,W_N^0,\eps)=\eps \int_{\R} :\! |W|^2 \!: dx-2\eps \Re \int_{\R} W\cdot W_N^0 dx+\eps  \int_{\R} |W_N^0|^2 dx +2\eps^\frac 12 \Re \int_{\R} (W-W_N^0)Q dx.
\end{align*}

\noi
Combining $\eqref{C4}$ and $\|Q\|_{L^2(\R)}^2=D$ yields 
\begin{align*}
\big\{ M^w(\eps^\frac 12 (W-W_N^0)+Q  ) \in [D-r,D+r]  \big \} =\{ |\Phi(Q,W_N^0,\eps)|  \le r \},
\end{align*}

\noi
which implies 
\begin{align}
\PP\big\{ M^w(\eps^\frac 12 (W-W_N^0)+Q  )  \in [D-r,D+r]    \big \}&=1- \PP\{ |\Phi(Q,W_N^0,\eps)|  > r \} \notag \\
&\ge 1-\frac{ \E \big[ |\Phi(Q,W_N^0,\eps)|^2 \big]}{r^2}.
\label{C5}
\end{align}

\noi
A similar calculation to \eqref{A55} gives
\begin{align}
\E \big[ |\Phi(Q,W_N^0,\eps)|^2 \big] \les \eps^2 (\log N)^2.
\label{C6}
\end{align}

\noi
It follows from \eqref{C5} and \eqref{C6} that 
\begin{align}
\PP\big\{ M^w(\eps^\frac 12 (W-W_N^0)+Q  )  \in [D-r,D+r]    \big \} \ge 1 -r^{-2}\eps^2 (\log N)^2.
\label{C55}
\end{align}

\noi
We now consider the quartic interaction part. Using the elementary inequality \eqref{You1} with $\gamma = 4$,
\begin{align}
|a+b+c|^4 \ge (1-\zeta)|c|^4-C_\zeta(|a|^4+|b|^4)
\label{You11}
\end{align}

\noi
we can use \eqref{You11} and \eqref{C55} to find 
\begin{align}
&\E\Bigg[ \bigg(\int_{\R}| \eps^\frac 12 (W-W_N^0)+Q     |^4 dx\bigg)\ind_{  \{ M^w(\eps^\frac 12 (W-W_N^0)+Q  ) \in [D-r,D+r]  \}   }    \Bigg] \notag \\
&\ge \E\Bigg[(1-\zeta)^4 \bigg(\int_{\R} |Q|^4 dx\bigg)\ind_{  \{ M^w(\eps^\frac 12 (W-W_N^0)+Q  ) \in [D-r,D+r]  \}   }  -C_\zeta \int_{\R} |\eps^{\frac 12}(W-W_N^0) |^4 dx   \Bigg] \notag \\
& \ge (1-\zeta)^4  (1 -r^{-2}\eps^2 (\log N)^2) \bigg(\int_{\R} |Q|^4 dx\bigg)- \wt C_\zeta \eps^2
\label{C77}
\end{align} 

\noi
for some large constant $\wt C_\zeta>0$.  Combining the lower bounds from \eqref{C22} and \eqref{C77}, we have
\begin{align*}
&\eps \log \E_{\mu_\eps}\bigg[ \exp\Big\{ -\frac 1\eps V(\phi) \ind_{ \{ M^w(\phi) \in [D-r,D+r] \}   }  \Big\} \bigg]\\
&\ge \frac \ld4 (1-\zeta)^4  (1 -r^{-2}\eps^2 (\log N)^2) \bigg(\int_{\R} |Q|^4 dx\bigg) -\frac {1+\eps^\frac 12}2 \|Q \|_{\mathcal{H}^1}^2\\
&\hphantom{XX} - \wt C_\zeta \eps^2-AC_\zeta \eps^3 (\log N)^3-C\eps^\frac 12 N.
\end{align*}

\noi 
Taking first the limit $\eps \to 0$, then $r \to 0$, and finally $\zeta \to 0$, we obtain  
\begin{align*}
&\liminf_{r\to 0}\liminf_{\eps \to 0}\eps \log \E_{\mu_\eps}\bigg[ \exp\Big\{ -\frac 1\eps V(\phi) \ind_{ \{ M^w(\phi) \in [D-r,D+r] \}   }  \Big\} \bigg]\\
& \ge  \frac \ld4 \int_{\R} |Q|^4 dx  -\frac 12 \int_{\R} |\nb Q|^2 dx- \int_{\R} |x|^2|Q|^2 dx\\
&= -\inf_{M(\phi)=D} H(\phi).
\end{align*}

\noi 
This completes the proof of \eqref{C0000}.

\end{proof}


\begin{remark}\rm \label{REM:wcond}
It follows from Proposition \ref{PROP:WL2} that
\begin{align*}
\lim_{r \to 0}\lim_{\eps \to 0}\eps \log \rho_{\eps, A} \big( \{ M^w(\phi) \in [D-r,D+r] \} \big)=-\inf\limits_{\substack{ M(\phi)=D}} H^G(\phi)>-\infty.
\end{align*}

\noi
This implies that 
\begin{align*}
\rho_{\eps, A} \big( \{ M^w(\phi) \in [D-r,D+r] \} \big)>0
\end{align*}

\noi
for sufficiently small $\eps>0$ and $r>0$.  Therefore, the conditional probability measure $\rho_{\eps,r}^D$ in \eqref{Gibbs5} is well defined.
\end{remark}

In the following, we describe the asymptotic behavior of the free energy in the low temperature limit.
\begin{proposition}\label{PROP:free}
Let $A \ge A_0$, where $A_0$ is given in Lemma \ref{LEM:coercive}. Then,
\begin{align*}
\lim_{\eps \to 0}\eps \log Z_{\eps, A}=-\inf_{\phi \in \mathcal{H}^{1} }H^G(\phi),
\end{align*}

\noi
where $Z_{\eps,A}$ is the grand canonical partition function defined in \eqref{Gibbs5}
and  $H^G$ is the grand canonical Hamiltonian $H^G$ given in \eqref{Ha1}.

\end{proposition}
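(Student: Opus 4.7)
The plan is to obtain Proposition \ref{PROP:free} as a direct corollary of the variational/$\Gamma$-convergence machinery already developed for the Laplace principle (Proposition \ref{PROP:GLDP}), specialized to the trivial functional $f \equiv 0$.

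First I would rewrite the partition function via the Gaussian rescaling $\phi = \sqrt{\eps}\psi$ with $\Law(\psi)=\mu$:
\begin{align*}
\eps \log Z_{\eps,A} \;=\; \eps \log \E_{\mu}\!\left[ e^{-\frac{1}{\eps} V(\sqrt{\eps}\,\psi)} \right].
\end{align*}
Applying the Bou\'e--Dupuis variational formula (Lemma \ref{LEM:var}) followed by the change of variables $\sqrt{\eps}\,u \mapsto u$ (exactly as in the derivation leading to \eqref{L2}), this identity becomes
\begin{align*}
-\eps \log Z_{\eps,A} \;=\; \inf_{u \in \Ha} \mathcal{F}^{V,\eps}(u),
\end{align*}
where $\mathcal{F}^{V,\eps}$ is the functional in \eqref{L222} with $f \equiv 0$.

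Next, I would invoke Lemma \ref{LEM:Ga0} with $f \equiv 0$, which is admissible since the constant function $0$ is continuous and bounded; the $\Gamma$-convergence and equicoercivity statements proved in Lemmas \ref{LEM:Ga2} and \ref{LEM:Ga3} apply verbatim in this case. This yields
\begin{align*}
\lim_{\eps \to 0} \inf_{u \in \Ha} \mathcal{F}^{V,\eps}(u) \;=\; \inf_{u \in \Ha} \mathcal{F}^{V,0}(u).
\end{align*}
Then Lemma \ref{LEM:Ga1} with $f \equiv 0$ identifies the right-hand side as a purely deterministic variational problem,
\begin{align*}
\inf_{u \in \Ha} \mathcal{F}^{V,0}(u) \;=\; \inf_{\phi \in \mathcal{S}'(\R)} J^G(\phi) \;=\; \inf_{\phi \in \mathcal{H}^1(\R)} H^G(\phi),
\end{align*}
where the last equality uses the density of $\mathcal{H}^1$ in $\mathcal{S}'$ together with the fact that $J^G = H^G$ on $\mathcal{H}^1$ and $J^G = +\infty$ otherwise.

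Chaining these three displays gives $\lim_{\eps \to 0} \eps \log Z_{\eps,A} = -\inf_{\phi \in \mathcal{H}^1} H^G(\phi)$, which is the claim; in particular, since Lemma \ref{LEM:coercive} ensures for $A \ge A_0$ that the unique minimizer is $\phi = 0$, the limiting value equals $0$. There is no genuine new obstacle here: the delicate analytic steps---the Skorokhod representation for the enhanced noise $\mathbb{W} = (W,\,:\!|W|^2\!:)$, the control of the Wick-renormalized taming term that produced \eqref{A3}--\eqref{A5}, and the coercivity bound \eqref{coer0} underpinning equicoercivity on $\bar{\mathcal{X}}$---have already been carried out in Section \ref{SEC:LDPG}. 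Proposition \ref{PROP:free} is therefore a specialization of that work, and the only point requiring care is verifying that the $\Gamma$-convergence and equicoercivity proofs go through without the $f$ term, which is immediate since $f$ enters those arguments only through continuous-bounded perturbations that vanish here.
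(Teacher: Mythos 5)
Your proof is correct, and it takes a genuinely different route from the one the paper gestures at. The paper's proof of Proposition \ref{PROP:free} simply says to follow the proof of Proposition \ref{PROP:WL2} without the microcanonical constraint, i.e.\ to redo the direct Bou\'e--Dupuis computation with the change of variables $Z=-\eps^{1/2}W_N+Q$ and the explicit error estimates of Section 4. You instead specialize the $\Gamma$-convergence machinery of Section 3 to $f\equiv 0$: identifying $-\eps\log Z_{\eps,A}=\inf_{u\in\Ha}\mathcal{F}^{V,\eps}(u)$ via Lemma \ref{LEM:Gama} and the rescaling $\sqrt{\eps}\,u\mapsto u$, then invoking Lemmas \ref{LEM:Ga0} and \ref{LEM:Ga1} (whose proofs via Lemmas \ref{LEM:Ga2}--\ref{LEM:Ga3} apply verbatim for the continuous bounded choice $f\equiv 0$, and which do not depend on Proposition \ref{PROP:free}, so there is no circularity) to conclude $\lim_{\eps\to 0}\inf_u\mathcal{F}^{V,\eps}(u)=\inf_{\phi\in\mathcal{H}^1}H^G(\phi)$. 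What your route buys is economy: it reuses lemmas already proved rather than re-running a variant of the Section 4 argument. In fact, the equality $\lim_{\eps\to 0}\inf_u\mathcal{F}^{V,\eps}(u)=\inf_{\phi}J^G(\phi)$ already appears embedded inside the paper's own proof of Proposition \ref{PROP:GLDP}, so your observation exposes that Proposition \ref{PROP:free} was implicitly established there; the paper's choice to re-derive it by the Section 4 method is a presentational decision, not a logical necessity. The only small thing worth making explicit, which you do at the end, is that Lemma \ref{LEM:coercive} gives the limiting value $0$ for $A\ge A_0$, consistent with the use of this fact as \eqref{F00} in the proof of Proposition \ref{PROP:WL2}.
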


\begin{proof}
We can follow the proof of Proposition \ref{PROP:WL2} without the constraint $\{M^w(\phi) \in [D-r,D_r] \}$, which makes the argument much simpler.
\end{proof}

\section{Proof of the LDP for the mixed ensemble}\label{SEC:LDP}

In this section, we present the proofs of Theorems~\ref{THM:1} and \ref{THM:2}.
We first show  that the mixed ensembles $\{\rho^D_{\eps,r} \}_{\eps,r}$, defined in \eqref{Gibbs4}, satisfy a large deviation principle with the rate function $J^D$ 
\begin{equation}\label{rateD}
\begin{split}
J^D(\phi)=
\begin{cases}
H(\phi)-\inf\limits_{M(\phi)=D} H(\phi) \quad &\text{if} \quad  \phi\in \mathcal{H}^1(\R) \quad \text{and} \quad M(\phi)=D \\
\infty \quad &\text{otherwise}.
\end{cases}
\end{split}
\end{equation}

\noi 
and speed $\eps>0$. In other words,
\begin{itemize}
\item[(1)] For every closed set $\mathcal{C} \subset \mathcal{S}=\mathcal{H}^{-\eta}(\R)$ or $L^p(\R)$, $p>2$, we have 
\begin{align}
\limsup_{r \to 0}\limsup_{\eps \to 0} \eps \log \rho_{\eps,r}^D(\mathcal{C})&\le -\inf_{\phi \in \mathcal{C}} J^D(\phi).
\label{LDPU}
\end{align}

\vspace{2mm}

\item[(2)] For every open set $\mathcal{O}\subset \mathcal{S}=\mathcal{H}^{-\eta}(\R)$ or $L^p(\R)$, $p>2$, we have 
\begin{align}
\liminf_{r\to 0}\liminf_{\eps \to 0} \eps \log \rho_{\eps,r}^D(\mathcal{O})&\ge -\inf_{\phi \in \mathcal{O} } J^D(\phi).
\label{LDPL}
\end{align} 

\end{itemize}

\begin{proof}[Proof of Theorem \ref{THM:1}]
To prove the large deviation upper bound in \eqref{LDPU}, we first show that for any given $\zeta>0$, there exists $\dl>0$ such that 
\begin{align}
\limsup_{r\to 0}\limsup_{\eps \to 0} \eps \log \rho_{\eps, r}^D (\{ \phi \in \cj B(\psi,\dl) \}) \le -J^D(\psi)+\zeta,
\label{G1}
\end{align}

\noi
where $B(\psi, \dl)$ denotes the open ball with center $\psi$ and radius $\dl>0$ with respect to $\mathcal{S}=\mathcal{H}^{-\eta}(\R)$ or $L^p(\R)$, $p>2$. Using the large deviation upper bound for the grand-canonical ensemble $\rho_{\eps,A}$ in Proposition \ref{PROP:GLDP}, we have 
\begin{align}
&\limsup_{\eps \to 0} \eps \log \rho_{\eps, r}^D (\{ \phi \in \cj B(\psi,\dl) \}) \notag \\
& \le \limsup_{\eps \to 0 } \eps \log \rho_{\eps,A} \big(  \cj B(\psi,\dl)  \cap \{M^w(\phi)\in [D-r,D+r]     \} \big) \notag  \\
&\hphantom{XX}- \liminf_{\eps \to 0} \eps \log \rho_{\eps,A}  \big( \{ M^w(\phi) \in [D-r,D+r]  \} \big)   \notag \\
&\le -\inf_{\phi \in \cj B(\psi,\dl)} J^G(\phi)- \liminf_{\eps \to 0} \eps \log \rho_{\eps,A}  \big( \{ M^w(\phi) \in [D-r,D+r]  \} \big).
\label{B1}
\end{align} 

\noi
Using the lower semicontinuity of $J^G$, we obtain that for any given $\zeta>0$, there exists $\dl>0$ such that 
\begin{align}
\inf\limits_{\phi \in \cj B(\psi,\dl) } J^G(\phi) \ge J^G(\psi)-\zeta.
\label{B2}
\end{align}

\noi
It follows from \eqref{B1}, \eqref{B2}, taking the limit $r\to 0$, and Proposition \ref{PROP:WL2} that   
\begin{align}
&\limsup_{r\to 0}\limsup_{\eps \to 0} \eps \log \rho_{\eps, r}^D (\{ \phi \in \cj B(\psi,\dl) \}) \notag \\
&\le -J^G(\psi)+\inf_{ M(\phi)=D} H^G(\phi)+\zeta.
\label{BB2}
\end{align}

\noi
We first consider the case $\psi \in \H^{1}$ with $M(\psi)=D$. By using the fact that $\psi\in \H^1$ and $M(\psi)=D$, we obtain
\begin{align}
&-J^G(\psi)+\inf_{ M(\phi)=D} H^G(\phi)+\zeta \notag \\
&=-H(\psi)+\inf_{M(\phi)=D} H(\phi)+\zeta \notag \\
&=-J^D(\psi)+\zeta,
\label{BB3}
\end{align}

\noi
Combining \eqref{BB2} and \eqref{BB3} yields
\begin{align*}
\limsup_{r\to 0}\limsup_{\eps \to 0} \eps \log \rho_{\eps, r}^D (\{ \phi \in \cj B(\psi,\dl) \})  \le -J^D(\psi)+\zeta,
\end{align*}

\noi
which completes  the proof of \eqref{G1} in the case $\psi\in \H^1$ and $M(\psi)=D$.

We now consider the case $\psi \notin \H^1$. By the definitions of $J^G$ and $J^D$ in \eqref{rateG} and \eqref{rateD}, it follows that $J^G(\psi)=\infty$ and $J^D(\psi)=\infty$. Therefore, \eqref{BB2} implies 
\begin{align*}
\limsup_{r\to 0}\limsup_{\eps \to 0} \eps \log \rho_{\eps, r}^D (\{ \phi \in \cj B(\psi,\dl) \}) \le  -\infty= -J^D(\psi), 
\end{align*}

\noi 
which completes  the proof of \eqref{G1} in the case $\psi \notin \H^1$.

\noi 
We now consider the case where $\psi \in \H^1$ and $M(\psi)\neq D$. Then, by the definition of $J^D$, we have $J^D(\psi)=\infty$. Note that 
\begin{align}
&\limsup_{\eps \to 0} \eps \log \rho_{\eps, r}^D (\{ \phi \in \cj B(\psi,\dl) \}) \notag \\
& = \limsup_{\eps \to 0 } \eps \log \rho_{\eps,A} \big(  \cj B(\psi,\dl)  \cap \{M^w(\phi)\in [D-r,D+r]     \} \big) \notag \\
&\hphantom{XX}- \liminf_{\eps \to 0} \eps \log \rho_{\eps,A}  \big( \{ M^w(\phi) \in [D-r,D+r]  \} \big).  
\label{B3}
\end{align}

\noi
By following the proof of Proposition \ref{PROP:WL2}, we obtain  
\begin{align}
&\lim_{r\to 0}\lim_{\eps \to 0 } \eps \log \rho_{\eps,A} \big(  \cj B(\psi,\dl)  \cap \{M^w(\phi)\in [D-r,D+r]     \} \big)\notag \\
& = -\inf_{\substack{\phi \in \H^1 \\ M(\phi)=D \\ \| \phi-\psi \|_{\mathcal{S} }<\frac \dl2  } } H^G(\phi)
\label{B4}
\end{align}

\noi
for any $\dl>0$. Since $\psi \in L^2 $ and $M(\psi)\neq D$,
the conditons $M(\phi)=D$ and $\| \phi-\psi \|_{\mathcal{S}}<\frac \dl2$ for arbitrary small $\dl>0$ lead to a contradiction,  implying that the infimum in \eqref{B4} is taken over the empty set. Therefore,
\begin{align}
\inf_{\substack{\phi \in \H^1 \\ M(\phi)=D \\ \| \phi-\psi \|_{\H^{-\eta}}<\frac \dl2  } } H^G(\phi)=\infty.
\label{BB4}
\end{align}

\noi 
Combining \eqref{B3}, \eqref{B4}, \eqref{BB4}, and Proposition \ref{PROP:WL2} yields 
\begin{align*}
&\limsup_{ r\to0}\limsup_{\eps \to 0} \eps \log \rho_{\eps, r}^D (\{ \phi \in \cj B(\psi,\dl) \})\\
&\le-\infty=-J^D(\psi).
\end{align*}

\noi
This completes the proof of \eqref{G1} in the case where $\psi \in \H^1$ and  $M(\psi) \neq D$.

\noi
Take any compact set $K$ in $H^{-\eta}$. By compactness, we can cover $K$ with finitely many closed balls $ \cj B(\psi_j, \dl_j)$ centered at  $\psi_j \in K$ with small $\dl_j>0$ and  
\begin{align}
\limsup_{r\to 0}\limsup_{\eps \to 0} \eps \log \rho_{\eps, r}^D (\{ \phi \in \cj B(\psi_j,\dl_j) \}) \le -J^D(\psi_j)+\zeta,
\label{B8}
\end{align}

\noi
which follows from \eqref{G1}. Recall that for a collection of sequences $\{x_n^j\}$ of positive real numbers, indexed by $1\le j \le N$ for some finite $N$, we have 
\begin{align}
\limsup_{n \to \infty}\frac 1n \log\Big(\sum_{j=1}^Nx_n^j\Big) =\max_{1\le j \le N} \big\{  \limsup_{n\to \infty} \frac 1n \log x_n^j \big\}.
\label{B9} 
\end{align}

\noi 
Combining \eqref{B8} and \eqref{B9} yields 
\begin{align*}
\limsup_{r \to 0} \limsup_{ \eps \to 0} \eps \log \rho_{\eps,r}^D(\{ \phi \in K  \}) \le -\min_j J^D(\psi_j)+\zeta \le -\inf_{\phi \in K} J^D(\phi)+\zeta. 
\end{align*}

\noi
By taking $\zeta \to 0$, we obtain the large deviation upper bound  \eqref{LDPU} for any compact set $K$. To extend the upper bound from any compact set to any closed set, see \cite[Lemma 3.3]{EHT}.

Next, we prove the large deviation lower bound in \eqref{LDPL}. In the following we show that for any open set $G$
\begin{align}
\liminf_{r \to 0} \liminf_{ \eps \to 0} \eps \log \rho_{\eps,r}^D(\{ \phi \in G  \}) \ge -\inf_{\phi \in G} J^D(\phi).
\label{B10}
\end{align}

Take any $\psi \in G$.  By choosing $\dl>0$ sufficiently small, we have $B(\psi,\dl) \subset G$. Note that 
\begin{align}
&\liminf_{\eps \to 0} \eps \log  \rho_{\eps,r}^D(\{ \phi \in G  \})  \notag \\
 &\ge \liminf_{\eps \to 0} \eps \log \rho_{\eps,r}^D (  \{  \phi \in B(\psi,\dl )  \} ) \notag \\
& \ge \liminf_{\eps \to 0 } \eps \log \rho_{\eps,A} \big(  B(\psi,\dl)  \cap \{M^w(\phi)\in [D-r,D+r]     \} \big) \notag  \\
&\hphantom{X}- \limsup_{\eps \to 0} \eps \log \rho_{\eps,A}  \big( \{ M^w(\phi) \in [D-r,D+r]  \} \big).
\label{B5}
\end{align}

\noi 
By following the proof of Proposition \ref{PROP:WL2}, we obtain  
\begin{align}
&\lim_{r\to 0}\lim_{\eps \to 0 } \eps \log \rho_{\eps,A} \big(  \cj B(\psi,\dl)  \cap \{M^w(\phi)\in [D-r,D+r]     \} \big)\notag \\
& = -\inf_{\substack{\phi \in \H^1 \\ M(\phi)=D \\ \| \phi-\psi \|_{\mathcal{S} }<\frac \dl2  } } H^G(\phi) 
\label{B6}
\end{align}

\noi
for any $\dl>0$ and any $\eta>0$. Therefore, \eqref{B5}, \eqref{B6}, and Proposition \ref{PROP:WL2} imply  
\begin{align}
&\liminf_{r \to 0} \liminf_{\eps \to 0} \eps \log  \rho_{\eps,r}^D(\{ \phi \in G  \}) \notag  \\
& \ge -\inf_{\substack{\phi \in \H^1 \\ M(\phi)=D \\ \| \phi-\psi \|_{\mathcal{S} }<\frac \dl2  } } H^G(\phi) +\inf_{ M(\phi)=D} H^G(\phi)
\label{BB7} 
\end{align}

\noi
for any $\dl>0$. We first consider the case where $\psi \in \H^1$ and  $M(\psi)=D$. Since $M(\psi)=D$, we have 
\begin{align}
\inf_{\substack{\phi \in \H^1 \\ M(\phi)=D \\ \| \phi-\psi \|_{\mathcal{S}}<\frac \dl2  } } H^G(\phi) \le H^G(\psi).
\label{B7}
\end{align}

\noi 
It follows from \eqref{BB7} and \eqref{B7}  that   
\begin{align}
&\liminf_{r\to 0}\liminf_{\eps \to 0} \eps \log  \rho_{\eps,r}^D(\{ \phi \in G  \}) \notag  \\
&\ge - H^G(\psi)+\inf_{M(\phi)=D} H^G(\phi) \notag \\
&=-H(\psi)+\inf_{M(\phi)=D} H(\phi)=-J^D(\psi),
\label{B11}
\end{align}

\noi 
where in the last two steps, we used $\psi\in \H^1$ and $M(\psi)=D$.

We now consider the case where either $\psi \notin \H^1$ or $M(\psi)\neq D$. In either scenario, by the definition of the rate function $J^D$ in \eqref{rateD}, we have $J^D(\psi)=\infty$. Consequently, we obtain
\begin{align}
\liminf_{r\to 0}\liminf_{\eps \to 0} \eps \log  \rho_{\eps,r}^D(\{ \phi \in G  \}) &\ge  -\infty =-J^D(\psi).
\label{B12}
\end{align}

From \eqref{B11} and \eqref{B12}, we obtain 
\begin{align*}
\liminf_{r\to 0}\liminf_{\eps \to 0} \eps \log  \rho_{\eps,r}^D(\{ \phi \in G  \})
&\ge -J^D(\psi)
\end{align*}

\noi
for any $\psi \in G$, which implies that 
\begin{align*}
\liminf_{r\to 0}\liminf_{\eps \to 0} \eps \log  \rho_{\eps,r}^D(\{ \phi \in G  \})  \ge \sup_{\psi \in G} \{  -J^D(\psi)  \}=-\inf_{\psi \in G} J^D(\psi).
\end{align*}

\noi
This completes the proof of the large deviation lower bound in \eqref{LDPL}.

\end{proof}

Before presenting the proof of Theorem~\ref{THM:2}, we first prove the following lemma. This lemma shows that if $\phi$ is far from the family $\M_D$ of minimizers for \eqref{Min0}, then $H(\phi)$ is also far from the minimal energy.
\begin{lemma}\label{LEM:sta}
Let $2\le p<\infty$. For every $\dl>0$, there exists $c(\dl)>0$ such that if $\phi \in \mathcal{H}^1(\R)$ satisfies $\|  \phi\|_{L^2(\R)}^2=D$ and
\begin{align*}
\inf_{Q \in \M_D}\| \phi-Q \|_{L^p(\R)} \ge \dl,
\end{align*}

\noi
then  
\begin{align*}
H(\phi) \ge \inf_{Q \in \M_D} H(Q)+c(\dl),
\end{align*}

\noi
where $\M_D$  denotes the set of minimizers for \eqref{Min0}

\end{lemma}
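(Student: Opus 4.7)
The plan is to argue by contradiction using the variational structure of $H$ together with the compactness of the embedding $\mathcal{H}^1(\R) \hookrightarrow L^q(\R)$ for every $2 \le q < \infty$, which is available thanks to the confining harmonic potential. Suppose the statement fails. Then there exist $\dl > 0$ and a sequence $\{\phi_n\} \subset \mathcal{H}^1(\R)$ with $\|\phi_n\|_{L^2(\R)}^2 = D$,
\begin{align*}
\inf_{Q\in\M_D} \|\phi_n - Q\|_{L^p(\R)} \ge \dl,
\end{align*}
and $H(\phi_n) \to I(D) := \inf_{M(\phi)=D} H(\phi)$.

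The first step is to show that $\{\phi_n\}$ is bounded in $\mathcal{H}^1(\R)$. Applying the Gagliardo-Nirenberg-Sobolev inequality \eqref{GNS} with $p = 4$ together with Young's inequality yields
\begin{align*}
\frac{\ld}{4}\|\phi_n\|_{L^4(\R)}^4 \le C \|\phi_n\|_{\mathcal{H}^1(\R)} \|\phi_n\|_{L^2(\R)}^3 \le \frac{1}{4}\|\phi_n\|_{\mathcal{H}^1(\R)}^2 + C(D),
\end{align*}
so $H(\phi_n) \ge \frac{1}{4}\|\phi_n\|_{\mathcal{H}^1(\R)}^2 - C(D)$, and convergence $H(\phi_n) \to I(D)$ gives the claimed uniform bound.

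Next, I would extract a subsequence $\phi_n \rightharpoonup \phi_*$ weakly in $\mathcal{H}^1(\R)$. The key compactness ingredient is that $\mathcal{H}^1(\R) \hookrightarrow L^q(\R)$ is compact for every finite $q \ge 2$, which follows from the fact that $\mathcal{L} = -\dx^2 + |x|^2$ has compact resolvent (equivalently, from the decay of the Hermite functions $h_n$ in \eqref{hn}). Hence, along this subsequence, $\phi_n \to \phi_*$ strongly in $L^q(\R)$ for every finite $q \ge 2$, and in particular $\|\phi_*\|_{L^2(\R)}^2 = D$ and $\|\phi_n\|_{L^4(\R)}^4 \to \|\phi_*\|_{L^4(\R)}^4$. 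Using weak lower semicontinuity of the $\mathcal{H}^1$-norm,
\begin{align*}
H(\phi_*) \le \liminf_{n\to\infty} H(\phi_n) = I(D),
\end{align*}
so $\phi_*$ is a minimizer of the constrained problem, i.e., $\phi_* \in \M_D$. But then $\|\phi_n - \phi_*\|_{L^p(\R)} \to 0$, contradicting $\inf_{Q \in \M_D}\|\phi_n - Q\|_{L^p(\R)} \ge \dl$.

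The main obstacle is the compact embedding $\mathcal{H}^1(\R) \hookrightarrow L^q(\R)$, which is the one place where the harmonic potential is essential; without it only local Rellich-Kondrachov would be available and one would have to rule out mass escaping to infinity. Here, however, the potential term $\int |x|^2 |\phi|^2 dx$ built into the $\mathcal{H}^1$-norm provides tightness at infinity, so the passage from weak to strong $L^q$ convergence is automatic. Everything else in the argument (the $\mathcal{H}^1$-bound, weak lower semicontinuity, continuity of the quartic term under strong $L^4$-convergence) is standard.
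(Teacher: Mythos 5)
Your proposal is correct and takes essentially the same route as the paper: a contradiction argument with a minimizing sequence, $\mathcal{H}^1$-boundedness via the Gagliardo--Nirenberg--Sobolev inequality \eqref{GNS}, the compact embedding $\mathcal{H}^1(\R)\hookrightarrow L^q(\R)$ supplied by the harmonic potential, and weak lower semicontinuity of $H$ to conclude the weak limit lies in $\M_D$, contradicting the assumed $L^p$-separation. You are in fact a bit more explicit than the paper at the lower-semicontinuity step, spelling out that the $\mathcal{H}^1$ part drops and the quartic term converges under strong $L^4$ convergence, whereas the paper passes over this in a single sentence.
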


\begin{proof}
To prove the statement, we proceed by contradiction. That is, supposed that there exists $\dl>0$ such that for every $n \ge 1$, there exists $\phi_n$ satisfying $M(\phi_n)=D$ and 
\begin{align}
\inf_{Q \in \M_D}\| \phi_n-Q \|_{L^p(\R)} \ge \dl,
\label{SS0}
\end{align}

\noi
but 
\begin{align*}
H(\phi_n) < \inf_{Q \in \M_D} H(Q)+\frac 1n.
\end{align*}

\noi
This implies that $H(\phi_n) \to \inf\limits_{Q \in \M_D} H(Q)$ as $n\to \infty$. That is, $\{\phi_n\}_{n \ge 1}$ is a minimizing sequence. The Gagliardo–Nirenberg–Sobolev inequality~\eqref{GNS} implies that
\begin{align*}
&H(\phi_n)=\frac 12 \int_{\R} |\dx \phi_n|^2 +\frac 12 \int_{\R} |x|^2 |\phi_n|^2 dx-\frac \ld4 \int_{\R} |\phi_n|^4 dx\\
&\ge \frac 12 \int_{\R} |\dx \phi_n|^2 +\frac 12 \int_{\R} |x|^2 |\phi_n|^2 dx -C\frac \ld 4 \| \phi_n \|_{\mathcal{H}^1(\R)} \| \phi_n \|_{L^2(\R)}^3\\
&=\frac 12 \| \phi_n \|_{\mathcal{H}^1(\R)}^2-C\frac \ld 4 \| \phi_n \|_{\mathcal{H}^1(\R)}D^\frac 32,
\end{align*}

\noi
where $C>0$ comes from the  Gagliardo–Nirenberg–Sobolev inequality \eqref{GNS}. Combined with $H(\phi_n) \to \inf\limits_{Q \in \M_D} H(Q)<\infty$ as $n \to \infty$, we conclude that $\{\phi_n \}_{ n \ge 1}$ is bounded in $\mathcal{H}^1(\R)$. Therefore, there exists $\psi \in \mathcal{H}^1(\R)$ such that $\phi_n $ converges weakly to $\psi$ in $\mathcal{H}^1(\R)$. It follows from \cite[Lemma 3.1]{Zhang} that the embedding $\mathcal{H}^1(\R) \hookrightarrow L^p(\R)$ is compact for any $2\le p<\infty$. This implies that up to a subsequence, 
\begin{align*}
\| \phi_n-\psi \|_{L^p(\R)} \to 0
\end{align*}

\noi
as $n \to \infty$ for any $2\le p<\infty$. Since $M(\phi_n)=D$ for every $n \ge 1$, we obtain $M(\psi)=D$. In other words, there is no escape of mass. Hence, we can conclude that $\psi$ is a minimizer for \eqref{Min0}. Therefore,
\begin{align*}
\inf_{Q \in \M_D}\| \phi_n-Q \|_{L^p(\R)} \le  \| \phi_n-\psi \|_{L^p(\R)} \to 0 
\end{align*}

\noi
as $n \to \infty$, which contradicts \eqref{SS0}. This completes the proof of Lemma \ref{LEM:sta}.
\end{proof}

We are now ready to present the proof of Theorem \ref{THM:2}.
\begin{proof}
It follows from Lemma  \ref{LEM:sta} that if
\begin{align*}
\inf_{Q\in \M^D} \|  \phi-Q    \|_{L^p(\R) } \ge \dl, 
\end{align*}

\noi
then we have 
\begin{align*}
J^D(\phi) \ge c(\dl)>0
\end{align*}

\noi
for some $c(\dl)>0$ arising from Lemma \ref{LEM:sta}. Combined with Theorem \ref{THM:1}, we obtain that  for any $2<p<\infty$,  
\begin{align*}
\rho_{\eps,r}^D\bigg( \Big\{ \inf_{Q\in \M^D} \|  \phi-Q    \|_{L^p(\R) } \ge \dl   \Big\}  \bigg) \les e^{-\frac{c(\dl)}{\eps}}
\end{align*}

\noi
as $\eps \to 0$ and $r \to 0$. This completes the proof of Theorem \ref{THM:2}.

\end{proof}

\appendix

\section{Variational characterization of the grand canonical Gibbs ensemble}

In this appendix, we study the Laplace integral with respect to the grand canonical Gibbs ensemble $\rho_{\eps,A}$ \eqref{Gibbs5} and its variational representation. 
For any  continuous and bounded functional $f: \mathcal{S}'(\R) \to \R$, 
\begin{align*}
-\eps \log \int  e^{-\frac 1\eps f(\phi)}\rho_{\eps,A}(d\phi)=-\eps \log \E_{\mu_\eps}\Big[e^{-\frac 1\eps (f(\phi)+V(\phi) ) }  \Big]+\eps \log \E_{\mu_\eps } \Big[ e^{-\frac 1\eps  V(\phi)  }    \Big],
\end{align*}

\noi
where $V$ is defined in \eqref{potent}. If $\phi$ represents a Gaussian random field with $\Law(\phi)=\mu=\mu_1$ whose covariance is $\mathcal{L}^{-1}$, applying the linear transformation $\phi \mapsto \sqrt{\eps} \phi$, $\sqrt{\eps} \phi$ yields a
Gaussian random field with $\Law(\sqrt{\eps} \phi )=\mu_\eps$ whose covariance is $\eps \mathcal{L}^{-1}$. Therefore, 
\begin{align}
-\eps \log \int  e^{-\frac 1\eps f(\phi)}\rho_{\eps,A}(d\phi)=-\eps \log \E_{\mu}\Big[e^{-\frac 1\eps (f( \sqrt{\eps} \phi)+V( \sqrt{\eps}\phi) ) }  \Big]+\eps \log \E_{\mu } \Big[ e^{-\frac 1\eps  V(\sqrt{\eps} \phi)  }    \Big].
\label{APE1}
\end{align}

\noi
In the following, we present the variational representation of the right hand side of \eqref{APE1}.

\begin{lemma}\label{LEM:Gama}
Let $f: \mathcal{S}'(\R) \to \R$  be a continuous and bounded functional. Then,
\begin{align*}
&-\eps \log \E_{\mu}\Big[e^{-\frac 1\eps (f( \sqrt{\eps} \phi)+V( \sqrt{\eps}\phi) ) }  \Big]\\
&=\inf_{u \in \Ha } \E\bigg[f(\eps^{\frac 12}W+ \eps^\frac 12Z(u) )+ V(\eps^{\frac 12}W+  \eps^\frac 12 Z(u) )+\frac \eps 2 \int_0^1 \| u(t) \|_{L^2(\R)}^2  dt  \bigg].
\end{align*}
\end{lemma}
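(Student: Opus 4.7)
The identity is a direct application of the Bou\'e--Dupuis variational formula (Lemma \ref{LEM:var}) to the scaled functional $F(\phi) = \frac{1}{\eps}\bigl(f(\sqrt{\eps}\phi) + V(\sqrt{\eps}\phi)\bigr)$, combined with a limiting procedure for the Wick renormalization inside $V$. The scaling $\sqrt{\eps}$ inside the arguments on the right-hand side, as well as the factor $\frac{\eps}{2}$ in front of the entropy, appear automatically after multiplying the Bou\'e--Dupuis identity by $\eps$, since the prefactor $\frac{1}{\eps}$ in $F$ cancels against the multiplicative $\eps$.

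The first step is to truncate, setting
\[
V_N(\phi) := -\frac{\ld}{4}\int_{\R} |\phi|^4\, dx + A\Bigl(\int_{\R}\bigl(|\P_N \phi|^2 - \s_N\bigr)\, dx\Bigr)^3
\]
and $F_N(\phi) := \frac{1}{\eps}\bigl(f(\sqrt{\eps}\phi) + V_N(\sqrt{\eps}\phi)\bigr)$, so that $F_N$ is measurable with respect to the finite-dimensional Gaussian subspace $E_N$. Since $f$ is bounded and $V_N(\sqrt{\eps}\cdot)$ is polynomial on $E_N$, the moment hypothesis $\E[|F_N(W_N)|^p] < \infty$ is immediate, and the coercivity bound \eqref{coer0} (uniform in $N$ for $A\ge A_0$) provides the lower bound $V_N(\sqrt{\eps}W_N) \ge -C$ needed to ensure $\E[|e^{-F_N(W_N)}|^q] < \infty$. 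Applying Lemma \ref{LEM:var} to $F_N$, multiplying by $\eps$, and rewriting the argument $\sqrt{\eps}(W_N + \P_N Z(u)) = \sqrt{\eps}W_N + \sqrt{\eps}\P_N Z(u)$ then yields the claimed identity with $V$ and $Z(u)$ replaced by their truncated counterparts $V_N$ and $\P_N Z(u)$.

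It remains to send $N \to \infty$. On the left-hand side, the $L^2(\mu)$ Cauchy convergence of the Wick square $\int :|\P_N\phi|^2:\, dx$ recalled in Subsection \ref{SUBSEC:Gauss}, together with the $L^p(\mu)$ convergence of $|\P_N\phi|^4$ and the continuity of $f$, gives $F_N(W_N) \to F(W)$ in $L^p(\PP)$; the uniform coercive lower bound on $V_N$ furnishes the dominating envelope, so one concludes by dominated convergence applied to the exponential. For the right-hand side, the uniform-in-$N$ coercivity restricts near-minimizing sequences to a set where the entropy $\E\int_0^1\|u(t)\|_{L^2}^2\,dt$ is uniformly bounded; a Skorokhod representation and lower-semicontinuity argument in the spirit of Lemma \ref{LEM:Ga2} then identifies the limit of the infima with $\inf_{u\in \Ha} \E\bigl[f(\sqrt{\eps}W + \sqrt{\eps}Z(u)) + V(\sqrt{\eps}W + \sqrt{\eps}Z(u)) + \frac{\eps}{2}\int_0^1\|u\|_{L^2}^2\,dt\bigr]$.

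The main obstacle is passing the limit inside the infimum on the right-hand side, specifically for the cubic taming term $A\bigl(\int :|\sqrt{\eps}W + \sqrt{\eps}Z(u)|^2:\, dx\bigr)^3$ along the minimizing sequence. This is handled by decomposing the renormalized mass as in the proof of Proposition \ref{PROP:WL2}, separating the dominant drift contribution $|\sqrt{\eps}Z(u)|^2$ from Gaussian cross terms of order $\sqrt{\eps}$, and absorbing the latter into the drift entropy and coercivity via Young's inequality together with moment estimates on the Wick square.
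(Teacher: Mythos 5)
Your proposal follows essentially the same strategy as the paper's proof: truncate to the finite-dimensional subspace $E_N$, apply the Bou\'e--Dupuis variational formula (Lemma~\ref{LEM:var}) at each fixed $N$, and then pass to the limit $N\to\infty$. The paper simply cites \cite[Theorem~1]{BG} for the limiting step, whereas you spell out the argument---dominated convergence using the $L^2(\mu)$ Cauchy convergence of the Wick square on the left-hand side, and a Skorokhod/$\Gamma$-convergence argument on the right-hand side in the spirit of Lemma~\ref{LEM:Ga2}. That is a reasonable unpacking of the cited reference, not a different route.

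One technical claim in your verification of the Bou\'e--Dupuis hypotheses is off. You assert that the coercivity bound \eqref{coer0} provides a pointwise lower bound $V_N(\sqrt{\eps}\,W_N)\ge -C$. But \eqref{coer0} is a deterministic statement about the full grand canonical Hamiltonian $H^G(\phi)$ for $\phi\in\mathcal{H}^1$, and crucially it uses the $\mathcal{H}^1$-seminorm term, which is not present in $V_N$. The functional $V_N$ alone is \emph{not} pointwise bounded below on Gaussian samples, because the Wick-renormalized mass $\int :\!|W_N|^2\!:\,dx$ takes negative values (it is centered), so the cubed taming term can be arbitrarily negative. The correct justification for $\E\big[e^{-qF_N(W_N)}\big]<\infty$ is the probabilistic estimate underlying the construction of the grand canonical ensemble itself (cf.\ Remark~\ref{REM:Ggibb} and \cite[Section~4]{OSeoT}), not a deterministic lower bound derived from \eqref{coer0}. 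This is a sub-step and does not affect the overall architecture of the argument, but the stated reason for the exponential-moment hypothesis should be replaced by that reference.
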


\begin{proof}
By applying the Bou\'e-Dupuis formula (Lemma \ref{LEM:var}), we obtain that for any ultraviolet cutoff $\P_N$, $N \ge 1$ 
\begin{align}
&-\eps \log \E_{\mu}\Big[e^{-\frac 1\eps (f( \sqrt{\eps} \phi_N)+V( \sqrt{\eps}\phi_N) ) }  \Big] \notag \\
&=\inf_{u \in \Ha } \E\bigg[f(\eps^{\frac 12}W_N+ \eps^\frac 12 \P_N Z(u) )+ V(\eps^{\frac 12}W_N+  \eps^\frac 12 \P_N Z(u) )+\frac \eps 2 \int_0^1 \| u(t) \|_{L^2(\R)}^2  dt  \bigg].
\label{APE2}
\end{align}

\noi
Following the proof in \cite[Theorem 1]{BG}, specifically the Gamma convergence of \eqref{APE2}
as $N \to \infty$, we obtain 
\begin{align*}
&\lim_{N\to \infty}\inf_{u \in \Ha } \E\bigg[f(\eps^{\frac 12}W_N+ \eps^\frac 12 \P_N Z(u) )+ V(\eps^{\frac 12}W_N+  \eps^\frac 12 \P_N Z(u) )+\frac \eps 2 \int_0^1 \| u(t) \|_{L^2(\R)}^2  dt  \bigg]\\
&= \inf_{u \in \Ha } \E\bigg[f(\eps^{\frac 12}W+ \eps^\frac 12  Z(u) )+ V(\eps^{\frac 12}W+  \eps^\frac 12  Z(u) )+\frac \eps 2 \int_0^1 \| u(t) \|_{L^2(\R)}^2  dt  \bigg].
\end{align*}

\noi
This completes the proof of Lemma \ref{LEM:Gama}.

\end{proof}

Applying Lemma \ref{LEM:Gama} and the change of variables $\eps^\frac 12 u \to u$, 
\begin{align*}
-\eps \log \int  e^{-\frac 1\eps f(\phi)}\rho_{\eps,A}(d\phi)= \inf_{u\in \Ha}\mathcal{F}^{V+f, \eps}(u)- \inf_{u\in \Ha}\mathcal{F}^{V,\eps}(u),
\end{align*}

\noi
where 
\begin{align}
\mathcal{F}^{V+f, \eps}(u)=\E_{\PP}\bigg[f(\eps^{\frac 12}W+ Z(u) )+ V(\eps^{\frac 12}W+ Z(u) )+\frac 12 \int_0^1 \| u(t) \|_{L^2(\R)}^2 dt   \bigg].
\label{APE3}
\end{align}

\noi
By expanding the potential $V$ in \eqref{potent}, we can rewrite the variational problem \eqref{APE3} in the following form.

\begin{lemma}\label{LEM:Gamma1}
Let $f: \mathcal{S}'(\R) \to \R$  be a continuous and bounded functional. Then,
\begin{align}
\inf_{u\in \Ha} \mathcal{F}^{f+V,\eps}(u)&=\inf_{u \in \Ha } \E_{\PP} \bigg[f(\eps^\frac 12 W+Z(u))+\mathcal{E}(\mathbb{W}, Z(u),\eps )-\frac\ld4 \int_{\R} |Z(u)|^4 dx \notag \\
&\hphantom{XXXXXXXXXXXXX}+A\bigg( \int_{\R} |Z(u)|^2 dx \bigg)^3 + \frac 12 \int_0^1 \| u(t) \|^2_{L^2(\R)} dt \bigg].
\label{APE4}
\end{align}

\noi
where $\mathbb{W}=(W, \, :\! |W|^2 \!: )$ and $\ld>0$ is a coupling constant. Here, 
\begin{align}
\mathcal{E}(\mathbb{W}, Z(u),\eps )=\sum_{j=1}^2 \mathcal{E}_j(\mathbb{W}, Z(u),\eps ),
\label{AP1} 
\end{align}

\noi 
where  
\begin{align*}
\mathcal{E}_1(\mathbb{W}, Z(u),\eps )&=-\frac{\ld \eps^2}{4} \int_{\R} |W|^4 dx -\ld \eps^\frac 32 \Re \int_{\R} |W|^2 W  \overline{ Z(u)} dx  \\
&\hphantom{X}-\frac{\ld \eps}{2} \int_{\R} |W|^2 |Z|^2 dx-\ld \eps \int_{\R}  |\Re (W \overline{Z(u)} ) |^2  dx \notag  \\
&\hphantom{X}- \ld  \eps^\frac 12 \int_{\R} |Z(u)|^2 \Re (W  \overline{ Z(u)} ) dx 
\end{align*}

\noi
and
\begin{align}
\mathcal{E}_2(\mathbb{W}, Z(u),\eps )=A\bigg| \int_{\R} :\! |\eps^\frac 12 W+Z(u) |^2   \! :  dx \bigg|^3 -A\bigg( \int_{\R} |Z(u)|^2  dx \bigg)^3.
\label{AP2}
\end{align}

\end{lemma}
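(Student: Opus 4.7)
The plan is that this lemma is an exact algebraic identity, so the proof reduces to expanding $V(\eps^{1/2}W + Z(u))$ using the definition \eqref{potent1} and verifying that the terms collect as claimed. Starting from $\mathcal{F}^{f+V,\eps}(u)$ in \eqref{APE3}, I would write $V(\eps^{1/2}W + Z(u)) = -\frac{\ld}{4}\int_{\R} |\eps^{1/2}W + Z(u)|^4\, dx + A\bigl(\int_{\R} :|\eps^{1/2}W + Z(u)|^2:\, dx\bigr)^3$ and separate each piece into a ``clean'' $Z(u)$-only part and a remainder depending on $\mathbb{W}$.

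For the quartic term, I would use the identity $|a+b|^4 = (|a|^2 + |b|^2 + 2\Re(a\bar b))^2$ with $a = \eps^{1/2}W$, $b = Z(u)$, producing six terms of weights $\eps^2|W|^4$, $|Z(u)|^4$, $4\eps|W|^2|Z(u)|^2$ (from the cross in $|a|^2|b|^2$ and in $(\Re(a\bar b))^2$ appropriately split as $2\eps|W|^2|Z(u)|^2 + 4\eps(\Re(W\overline{Z(u)}))^2$), $4\eps^{3/2}|W|^2\Re(W\overline{Z(u)})$, and $4\eps^{1/2}|Z(u)|^2\Re(W\overline{Z(u)})$. Integrating over $\R$ and multiplying by $-\ld/4$ yields $-\tfrac{\ld}{4}\int|Z(u)|^4\, dx$ plus exactly the five terms defining $\mathcal{E}_1$. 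Each integrand is pathwise meaningful: by Subsection \ref{SUBSEC:Gauss}, $W \in L^p(\R)$ almost surely for all finite $p>2$, while $Z(u) \in \mathcal{H}^1(\R) \subset L^\infty(\R)$ by one-dimensional Sobolev embedding, so all mixed integrals are finite without further renormalization.

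For the taming term, I would use that Wick ordering is taken relative to the Gaussian $\eps^{1/2}W$ (with variance $\eps\s(x)$), so that
\begin{align*}
:\!|\eps^{1/2}W + Z(u)|^2\!: \;=\; \eps :\!|W|^2\!: \;+\; 2\eps^{1/2}\Re(W\overline{Z(u)}) \;+\; |Z(u)|^2,
\end{align*}
which is the decomposition already used in \eqref{WL22}. Cubing and subtracting the pure $Z(u)$ contribution gives precisely the term $\mathcal{E}_2(\mathbb{W}, Z(u), \eps)$ as defined in \eqref{AP2}. Combined with the quartic expansion, the integrand inside the expectation in \eqref{APE3} equals
\begin{align*}
f(\eps^{1/2}W + Z(u)) + \mathcal{E}(\mathbb{W}, Z(u), \eps) - \tfrac{\ld}{4}\int_{\R} |Z(u)|^4\, dx + A\Bigl(\int_{\R} |Z(u)|^2\, dx\Bigr)^3 + \tfrac{1}{2}\int_0^1 \|u(t)\|^2_{L^2(\R)}\, dt,
\end{align*}
and taking the infimum over $u \in \Ha$ gives \eqref{APE4}.

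There is no substantive analytic obstacle here, since the statement is an identity at the level of integrands rather than an estimate or a limit. The only care needed is bookkeeping of the six powers of $\eps^{1/2}$ in the quartic binomial expansion and a precise interpretation of the Wick square as a Gaussian renormalization with respect to $W$ alone, after which the decomposition of $:\!|\eps^{1/2}W + Z(u)|^2\!:$ follows directly from linearity of the Wick product and the fact that $Z(u)$ is orthogonal (in the Wick sense) to the Gaussian chaos in which $W$ lives.
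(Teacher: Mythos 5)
Your proof is correct, and it follows the only natural route: the lemma is a pure bookkeeping identity obtained by expanding $V(\eps^{1/2}W+Z(u))$ via the trinomial $|\eps^{1/2}W+Z(u)|^2 = \eps|W|^2 + 2\eps^{1/2}\Re(W\overline{Z(u)}) + |Z(u)|^2$ (so that squaring gives the six terms of $\mathcal{E}_1$ plus $|Z(u)|^4$) and the Wick decomposition $:\!|\eps^{1/2}W+Z(u)|^2\!: \; = \; \eps:\!|W|^2\!: + 2\eps^{1/2}\Re(W\overline{Z(u)}) + |Z(u)|^2$ already recorded in \eqref{WL22} for the taming part. The paper states Lemma \ref{LEM:Gamma1} in the appendix without supplying a proof, evidently regarding exactly this expansion as routine, so there is nothing to contrast with; you have simply supplied the omitted computation. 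Your coefficient check is right ($2ab\to -\tfrac{\ld\eps}{2}\int|W|^2|Z|^2$, $c^2\to -\ld\eps\int|\Re(W\overline{Z})|^2$, $2ac\to -\ld\eps^{3/2}\int|W|^2\Re(W\overline{Z})$, $2bc\to -\ld\eps^{1/2}\int|Z|^2\Re(W\overline{Z})$), and the pathwise integrability remark ($W\in L^p$ a.s.\ for $p>2$, $Z(u)\in\mathcal{H}^1\hookrightarrow L^\infty$ in 1D) correctly justifies that $\mathcal{E}_1$ needs no renormalization. One small presentational slip: the phrase ``$4\eps|W|^2|Z(u)|^2$ (from the cross in $|a|^2|b|^2$ and in $(\Re(a\bar b))^2$ \dots)'' misnames the source terms before you correctly split them; the two $\eps$-order contributions come from $2ab$ and $c^2$ and are genuinely different integrands, as you then write. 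You may also note that \eqref{potent1} writes the taming factor as $A(\int:\!|\phi|^2\!:dx)^3$ while $\mathcal{E}_2$ in \eqref{AP2} carries $A|\cdot|^3$; this mismatch is internal to the paper (the body uses the absolute-value form, e.g.\ in the proof of Proposition \ref{PROP:WL2}), and your decomposition is consistent with the latter convention.
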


In the variational problem \eqref{APE4}, the well-behaved (coercive) terms  are the following positive terms
\begin{align*}
\mathcal{C}(u):= \E \bigg[A\bigg( \int_{\R} |Z(u)|^2 dx \bigg)^3  +\frac 12 \int_0^1  \| u(t) \|^2_{L^2(\R)} dt  \bigg].
\end{align*}

\noi
By using the coercive structure $\mathcal{C}(u)$, we can control the error term $\mathcal{E}$, which vanishes as  $\eps \to 0$.

\begin{lemma}\label{LEM:error}
Let $\mathcal{E}(\mathbb{W}, Z(u),\eps )$ be as defined in \eqref{AP1}. Then, 
\begin{align*}
|\mathcal{E}(\mathbb{W}, Z(u),\eps )| &\les  \eps^\frac 12 \cdot C+\eps^\frac 12  \cdot \mathcal{C}(u),
\end{align*}

\noi
where $C$ arises from the expected values of the higher moments for each component of $\mathbb{W}=(W, \, :\! |W|^2 \!: )$ in $\mathcal{H}^{-\eta}$. In particular, for any $u\in \Ha$
\begin{align*}
\mathcal{F}^{V+f, \eps}(u) &\ge - \eps^\frac 12 C +(1-\dl)\mathcal{C}(u)\\
\mathcal{F}^{V+f, \eps}(u) &\le  \eps^\frac 12 C +(1+\dl) \mathcal{C}(u)
\end{align*}

\noi
for some small $\dl>0$, independent of $u\in \Ha$, where $C$ depends only on the expected values of higher moments of each component of $\mathbb{W}=(W, \, :\! |W|^2 \!: )$ in $\mathcal{H}^{-\eta}$.  

\end{lemma}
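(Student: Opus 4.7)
The plan is to bound $\mathcal{E}=\mathcal{E}_1+\mathcal{E}_2$ summand by summand, in expectation, by a constant multiple of $\eps^{1/2}$ times the sum of (i) a pure-$\mathbb{W}$ moment (finite, contributing to $C$) and (ii) a $Z(u)$-quantity dominated by $\mathcal{C}(u)$. The two workhorses are Young's inequality, to separate $\mathbb{W}$-factors from $Z(u)$-factors, and the Gagliardo--Nirenberg--Sobolev inequality \eqref{GNS}, which upgrades $L^4$-norms of $Z(u)$ into $\|Z(u)\|_{\mathcal{H}^1}$ and $\|Z(u)\|_{L^2}$. Via \eqref{LL3} we have $\|Z(u)\|_{\mathcal{H}^1}^2 \le \int_0^1\|u\|_{L^2}^2 dt$, while $\|Z(u)\|_{L^2}^6 = (\int_\R|Z(u)|^2 dx)^3$, so both quantities are bounded by $\mathcal{C}(u)$ after taking expectation (up to the constant $A$).

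For $\mathcal{E}_1$, each of the five summands has the form $\eps^k\int_\R|W|^a|Z(u)|^b dx$ with $a+b=4$ and $k \in \{\tfrac12,1,\tfrac32,2\}$. The term dictating the final bound is the one with the smallest $\eps$-power, namely the last; for it, H\"older, Young with exponents $(4,4/3)$, and \eqref{GNS} yield
\begin{align*}
\eps^{1/2}\Big|\int_\R|Z(u)|^2\Re\bigl(W\overline{Z(u)}\bigr)\,dx\Big| &\le \eps^{1/2}\|W\|_{L^4}\|Z(u)\|_{L^4}^3\\
&\les \eps^{1/2}\|W\|_{L^4}^4 + \eps^{1/2}\bigl(\|Z(u)\|_{\mathcal{H}^1}^2+\|Z(u)\|_{L^2}^6\bigr).
\end{align*}
Taking expectation and using $\E_\mu\|W\|_{L^4}^4<\infty$ from Subsection \ref{SUBSEC:Gauss} contributes $\les\eps^{1/2}(C+\mathcal{C}(u))$, as required. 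The remaining four summands of $\mathcal{E}_1$ carry strictly higher $\eps$-powers and fewer $Z(u)$-factors, so the identical scheme applies with extra slack.

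For $\mathcal{E}_2$, I expand $M^w(\eps^{1/2}W+Z(u)) = M(Z(u)) + R_\eps$, where $R_\eps = 2\eps^{1/2}\Re\int_\R W\overline{Z(u)}\,dx + \eps\int_\R :|W|^2:dx$, so that every term of $R_\eps$ carries at least a factor $\eps^{1/2}$. The elementary inequality $\bigl||a+b|^3-|a|^3\bigr| \les |a|^2|b|+|a||b|^2+|b|^3$ then gives
\begin{align*}
|\mathcal{E}_2|\les A\bigl(M(Z(u))^2|R_\eps| + M(Z(u))|R_\eps|^2 + |R_\eps|^3\bigr).
\end{align*}
The pairing $\int_\R W\overline{Z(u)}\,dx$ is bounded by duality by $\|W\|_{\mathcal{H}^{-\eta}}\|Z(u)\|_{\mathcal{H}^\eta}$, followed by the interpolation $\|Z(u)\|_{\mathcal{H}^\eta}\le \|Z(u)\|_{\mathcal{H}^1}^\eta\|Z(u)\|_{L^2}^{1-\eta}$; the quantity $\int_\R :|W|^2:dx$ is an $L^2(\mu)$-scalar with finite moments of all orders by Subsection \ref{SUBSEC:Gauss}. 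Choosing Young weights in each of the three cross terms so that the resulting $Z(u)$-exponents fit into combinations of $(\int|Z(u)|^2 dx)^3$ and $\|Z(u)\|_{\mathcal{H}^1}^2$, and using finiteness of the $\mathbb{W}$-moments, produces $\E|\mathcal{E}_2|\les\eps^{1/2}(C+\mathcal{C}(u))$.

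The two inequalities for $\mathcal{F}^{V+f,\eps}(u)$ then follow by combining the bound $\E|\mathcal{E}|\les\eps^{1/2}(C+\mathcal{C}(u))$ with boundedness of $f$ and the coercivity \eqref{coer0} applied to $Z(u)$, which yields
\begin{align*}
-\tfrac{\ld}{4}\int_\R|Z(u)|^4 dx + A\Big(\int_\R|Z(u)|^2 dx\Big)^3 + \tfrac12\int_0^1\|u\|_{L^2}^2 dt \ge (1-\dl)\mathcal{C}(u)
\end{align*}
for $A\ge A_0$ and small $\dl$; the matching upper bound is immediate from $-\tfrac{\ld}{4}\int|Z(u)|^4\le 0$ together with $A(\int|Z(u)|^2)^3 + \tfrac12\int\|u\|_{L^2}^2 dt = \mathcal{C}(u)$ after expectation. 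The main technical obstacle is the bookkeeping for $\mathcal{E}_2$: each cube-expansion cross term demands a bespoke choice of Young weights that simultaneously extracts a factor of $\eps^{1/2}$ and keeps the $Z(u)$-side within $\mathcal{C}(u)$; one must also verify that the Wick square $:|W|^2:$, which lives only in $\mathcal{H}^{-2\eta}$, pairs correctly against polynomial $Z(u)$-quantities via Sobolev duality, but this is routine given the estimates in Subsection \ref{SUBSEC:Gauss} and \cite{BTT,RSTW}.
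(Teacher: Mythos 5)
Your proof is correct, and the route you take (Young's inequality plus the Gagliardo--Nirenberg--Sobolev bound \eqref{GNS} to split $W$-moments from $Z(u)$-quantities, the mass expansion $M^w(\eps^{1/2}W+Z)=M(Z)+R_\eps$, and the cube inequality $\bigl||a+b|^3-|a|^3\bigr|\les |a|^2|b|+|a||b|^2+|b|^3$) is exactly what the paper implicitly relies on: the paper gives no in-text proof, only the pointer to \cite[Lemma~4.1]{OSeoT}, and your argument is the natural self-contained realization of that scheme within the paper's framework (using \eqref{LL3} for the entropy term, \eqref{coer0} for coercivity, and finite moments of $\|W\|_{L^4}$ and of the scalar $\int:\!|W|^2\!:dx$). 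One small imprecision worth flagging, which is not a gap: your closing worry about the Wick square $:\!|W|^2\!:$ ``pairing against polynomial $Z(u)$-quantities via Sobolev duality'' does not actually arise, since in the expansion of $M^w(\eps^{1/2}W+Z)$ the Wick square appears only as the pure scalar $\int:\!|W|^2\!:dx$ multiplied by $\eps$, with no $Z(u)$-factor attached; the only $W$-versus-$Z(u)$ pairing is the linear one $\int W\overline{Z(u)}\,dx$, which you already treat by duality in $\mathcal{H}^{-\eta}\times\mathcal{H}^{\eta}$ and interpolation, so the ``bespoke Young weights'' bookkeeping for $\mathcal{E}_2$ is slightly lighter than you suggest.
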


For the proof of Lemma \ref{LEM:error}, we can follow \cite[Lemma 4.1]{OSeoT}.

\begin{ackno}\rm
The work of P.S. is partially supported by NSF grants DMS-1811093 and DMS-2154090. 
\end{ackno}

\end{document}